\def\@cite#1#2{{\m@th\upshape\bfseries%
[{#1\if@tempswa{\m@th\upshape\mdseries, #2}\fi}]}}
\theoremstyle{plain}
\newtheorem{theorem}{Theorem}
\newtheorem{coro}{Corollary}
\newtheorem{thm}{Theorem}[section]
\newtheorem{cor}[thm]{Corollary}
\newtheorem{prop}{Proposition}
\newtheorem{ass}{Assumption}
\newtheorem{lemma}[thm]{Lemma}
\theoremstyle{definition}
\newtheorem{defn}{Definition}
\newtheorem{prob}{Problem}
\newtheorem{rem}{Remark}
\numberwithin{equation}{subsection}
\renewcommand{\bold}[1]{\medskip \noindent {\bf #1 }\nopagebreak}
\newcommand{\nc}{\newcommand}
\newcommand{\rnc}{\renewcommand}
\nc\bA{\mathbb{A}}
\nc\bB{\mathbb{B}}
\nc\bC{\mathbb{C}}
\nc\bD{\mathbb{D}}
\nc\bE{\mathbb{E}}
\nc\bF{\mathbb{F}}
\nc\bG{\mathbb{G}}
\nc\bH{\mathbb{H}}
\nc\bI{\mathbb{I}}
\nc{\bJ}{\mathbb{J}} 
\nc\bK{\mathbb{K}}
\nc\bL{\mathbb{L}}
\nc\bM{\mathbb{M}}
\nc\bN{\mathbb{N}}
\nc\bO{\mathbb{O}}
\nc\bP{\mathbb{P}}
\nc\bQ{\mathbb{Q}}
\nc\bR{\mathbb{R}}
\nc\bS{\mathbb{S}}
\nc\bT{\mathbb{T}}
\nc\bU{\mathbb{U}}
\nc\bV{\mathbb{V}}
\nc\bW{\mathbb{W}}
\nc\bY{\mathbb{Y}}
\nc\bX{\mathbb{X}}
\nc\bZ{\mathbb{Z}}
\nc\cA{\mathcal{A}}
\nc\cB{\mathcal{B}}
\nc\cC{\mathcal{C}}
\rnc\cD{\mathcal{D}}
\nc\cE{\mathcal{E}}
\nc\cF{\mathcal{F}}
\nc\cG{\mathcal{G}}
\rnc\cH{\mathcal{H}}
\nc\cI{\mathcal{I}}
\nc{\cJ}{\mathcal{J}} 
\nc\cK{\mathcal{K}}
\rnc\cL{\mathcal{L}}
\nc\cM{\mathcal{M}}
\nc\cN{\mathcal{N}}
\nc\cO{\mathcal{O}}
\nc\cP{\mathcal{P}}
\nc\cQ{\mathcal{Q}}
\rnc\cR{\mathcal{R}}
\nc\cS{\mathcal{S}}
\nc\cT{\mathcal{T}}
\nc\cU{\mathcal{U}}
\nc\cV{\mathcal{V}}
\nc\cW{\mathcal{W}}
\nc\cY{\mathcal{Y}}
\nc\cX{\mathcal{X}}
\nc\cZ{\mathcal{Z}}
\newcommand{\ra}{\longrightarrow}
\newcommand{\M}{\mathcal{M}}
\newcommand{\strata}{\mathcal{H}}
\newcommand{\R}{\mathbb R}
\nc{\dmo}{\DeclareMathOperator}
\rnc{\Re}{\operatorname{Re}}
\rnc{\Im}{\operatorname{Im}}
\rnc{\span}{\operatorname{span}}
\dmo{\rank}{rank}
\dmo{\End}{End}
\dmo{\Hom}{Hom}
\dmo{\Jac}{Jac}
\dmo{\Id}{Id}
\dmo{\Ann}{Ann}
\dmo{\Area}{Area}
\dmo{\CP}{\bC P^1}
\dmo{\Aut}{Aut}
\dmo{\Sp}{Sp}
\dmo{\SL}{SL}
\dmo{\PSL}{PSL}
\dmo{\GL}{GL}
\title[Divergence on Average]{Divergent on average directions of Teichm\"uller geodesic flow}
\author[Apisa]{Paul~Apisa}
\author[Masur]{Howard~Masur}
\begin{document}
\maketitle

\begin{abstract}
The set of directions from a quadratic differential that diverge on average under Teichm\"uller geodesic flow has Hausdorff dimension exactly equal to one-half.
\end{abstract}

%
%

\section{Introduction}

Suppose $g_t$ is a flow on a topological space $S$.  We say that $p\in S$ is {\em divergent} if $\{g_t  p \}_{t \geq 0}$ eventually leaves every compact set of $S$. We say that $p$ is {\em divergent on average} if the proportion of time that $\{g_t  p\}_{0 \leq t \leq T}$ spends in any compact subset of $S$ tends to zero as $T$ tends to $\infty$. 

Let $\M_{g,n}$ be the moduli space of closed genus $g$ Riemann surfaces with $n$ punctures. The cotangent space of $\M_{g,n}$ coincides with the moduli space $Q_{g,n}$ of holomorphic quadratic differentials on surfaces in $\M_{g,n}$ and admits an $\SL(2, \R)$ action generated by complex scalar multiplication $r_\theta(q)=e^{i\theta}q$   and Teichm\"uller geodesic flow $g_t$. 
(see the survey paper of \cite{Z}  for the definition of the flat structure defined by a quadratic or Abelian differential and the Teichm\"uller geodesic flow). The space $Q_{g,n}$ admits an $\SL(2, \R)$-invariant stratification by specifying the number of zeros and poles and their orders of vanishing. 

For any holomorphic quadratic differential $q$  it is a consequence of Chaika-Eskin~\cite[Theorem 1.1]{EC} that the set of directions $\theta$ such that $r_\theta q$  diverges on average under the Teichm\"uller flow in its stratum has measure zero. We prove the following result.

\begin{theorem}\label{T1}
For a quadratic or Abelian differential $q$ the set of directions $\theta$ such that $r_\theta q$  diverges on average (either in its stratum or in $Q_{g,n}$) has Hausdorff dimension exactly equal to $\frac{1}{2}$. 
\end{theorem}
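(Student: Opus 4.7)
The plan is to treat the two inequalities separately, since the lower bound follows from prior art while the upper bound is what must be established from scratch.

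For the lower bound $\dim_H \ge \tfrac{1}{2}$, observe that any direction for which the forward Teichm\"uller orbit is strictly divergent in $Q_{g,n}$ is automatically divergent on average, both in its stratum and in $Q_{g,n}$. Hence it suffices to exhibit a subset of directions of Hausdorff dimension $\tfrac{1}{2}$ whose orbits eventually leave every compact set of $Q_{g,n}$. Such a family is constructed by the classical Cheung--Masur mechanism: one inductively selects angles that are increasingly well-approximated by saddle-connection directions at a geometric sequence of length scales, forming a self-similar Cantor set of dimension exactly $\tfrac{1}{2}$. This construction takes place inside the stratum of $q$, so the lower bound holds in both senses simultaneously.

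For the upper bound we work in the stratum (the more restrictive setting) and build on two ingredients. First, a quantitative non-divergence estimate: using the quadratic growth of saddle connections, together with the observation that a saddle connection of length $\ell_\gamma$ at angle $\theta_\gamma$ becomes shorter than $\epsilon$ after applying $g_T r_\theta$ only if $\ell_\gamma \lesssim \epsilon e^T$ and $\theta$ lies in an arc of length $\lesssim \epsilon/(e^T \ell_\gamma)$ around $\pi/2 - \theta_\gamma$, one obtains
\[
B(\epsilon,T) := \{\theta : g_T r_\theta q \text{ has a saddle connection of length } < \epsilon\}, \qquad |B(\epsilon,T)| \le C(q)\,\epsilon^2,
\]
uniformly in $T$, with $B(\epsilon,T)$ decomposed as a union of arcs of length at most $e^{-2T}$. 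Second, an effective decorrelation statement: for well-separated times $T_2 - T_1 \ge c$ with $c$ suitably large in terms of $\epsilon$, the conditional Lebesgue density of $B(\epsilon,T_2)$ inside an arc of the covering of $B(\epsilon,T_1)$ is still $O(\epsilon^2)$. This is the quantitative refinement of the averaging step in Chaika--Eskin and should follow from quantitative mixing of the rotation/flow on the stratum (in the spirit of Eskin--Masur--Mirzakhani).

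Granted these ingredients, the dimension bound follows from a covering argument. Fix $\eta < 1$ close to $1$ and $\epsilon > 0$ small. Any direction diverging on average in the stratum lies, for all large $N$, in $B(\epsilon,nc)$ for at least $\eta N/c$ of the sampled times $n c$ with $n \le N/c$. Summing over the $\binom{N/c}{\eta N/c}$ choices of bad indices and applying the decorrelation bound, one covers this set by at most $\exp\!\bigl(N[(\log 2 + \eta \log(C\epsilon^2))/c + 2]\bigr)$ arcs of length $e^{-2N}$. Evaluating the $s$-dimensional Hausdorff sum and choosing $c \approx 2\eta |\log \epsilon|$ yields the bound $\dim_H \le \tfrac12 + o_\eta(1)$; sending $\eta \to 1$ gives $\dim_H \le \tfrac{1}{2}$.

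The main obstacle is the decorrelation step. The measure bound $|B(\epsilon,T)| \le C\epsilon^2$ is already enough to recover the Chaika--Eskin measure-zero conclusion, but pinning down the exponent $\tfrac{1}{2}$ requires a scale-by-scale independence statement for the bad events at different times. Establishing this amounts to a uniform count of saddle connections of a prescribed length that fall inside a short angular window, combined with quantitative recurrence of the Teichm\"uller flow; handling this uniformly in the stratum (and verifying it in the wider space $Q_{g,n}$, where the stratification is locally finite so only a bounded multiplicative loss is incurred) is where the bulk of the technical work lies.
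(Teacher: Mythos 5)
Your proposal has the two halves of the argument backwards, and the half you declare to be ``prior art'' is in fact the one that fails. In the paper the upper bound $\dim_H \le \tfrac12$ is the result that is quoted from prior work (al-Saqban--Apisa--Erchenko--Khalil--Mirzadeh--Uyanik, using a KKLM-style covering argument essentially like the one you sketch); the novelty of this paper is precisely the lower bound, which you propose to dispatch in two sentences.

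Your lower-bound argument is where the genuine gap lies. You propose to find a Hausdorff-dimension-$\tfrac12$ set of strictly divergent directions (via a Cheung--Masur-type Cantor construction) and note that strict divergence implies divergence on average. The implication is correct, but the premise is false in general: for a Veech surface $q$, the set $D(q)$ of strictly divergent directions coincides with the set of non-minimal directions and is therefore countable, i.e.\ has Hausdorff dimension $0$. Since Veech surfaces are dense in every stratum, no construction that only produces strictly divergent directions can establish the lower bound of $\tfrac12$ for \emph{all} $q$. This is exactly the obstacle the paper is built to overcome: it constructs a Cantor set of directions along which the geodesic repeatedly drops into the thin part (a specific cylinder becomes $(\delta,c)$-thin and dominates the elapsed time) but then re-enters a $(\delta,c)$-thick locus, at which point a new cylinder of definite area can be found and the process iterated (the ``child selection process''). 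The resulting directions diverge on average but are engineered to \emph{not} strictly diverge, which is what makes the construction robust across all $q$, including lattice examples where $D(q)$ has dimension zero. The delicate work is showing, at each stage, that a definite proportion of ``protochild'' directions yield $(\delta,c)$-thick surfaces (Sections 4--7), and then verifying Cheung's Cantor-set dimension criterion with exponent $s=\tfrac12$.

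One smaller point: your upper-bound sketch, while heuristically reasonable, is not self-contained as written (the ``effective decorrelation'' step is asserted to ``follow from quantitative mixing,'' which is the entire technical content of the KKLM/al-Saqban et al.\ estimate), but since this is genuinely available in the literature the paper simply cites it; that part of your outline is not a gap so much as a rederivation of a known theorem.
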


\noindent As a corollary, we have

\begin{coro}
Let $\mathcal{H}$ be any stratum of quadratic or Abelian differentials. 
Let $f:\mathcal{H}\to \mathbb R$ be any compactly supported continuous function. Then for any $q \in \mathcal{H}$ the set of $\theta\in [0,2\pi)$ such that 
$$\lim_{T\to\infty}\frac{1}{T}\int_0^Tf(g_tr_\theta q)dt=0$$ has Hausdorff dimension at least $\frac{1}{2}$.  
\end{coro}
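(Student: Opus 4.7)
The plan is to deduce the corollary directly from Theorem~\ref{T1} by showing that divergence on average within the stratum $\mathcal{H}$ already forces the Birkhoff-type time average of any compactly supported continuous function to vanish. So the set of $\theta$ for which the time average is zero contains the set of directions that diverge on average in $\mathcal{H}$, and Theorem~\ref{T1} gives the desired lower bound of $\frac{1}{2}$ on its Hausdorff dimension.

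First, I would fix a compactly supported continuous $f : \mathcal{H} \to \mathbb{R}$ and let $K = \mathrm{supp}(f)$, which is a compact subset of $\mathcal{H}$, and set $M = \|f\|_\infty < \infty$. Then for any $q \in \mathcal{H}$ and any direction $\theta$, one has the pointwise bound
\[
\left| \frac{1}{T} \int_0^T f(g_t r_\theta q)\, dt \right|
\;\leq\; \frac{M}{T} \int_0^T \mathbf{1}_K(g_t r_\theta q)\, dt,
\]
i.e.\ $M$ times the proportion of time the orbit $\{g_t r_\theta q\}_{0 \leq t \leq T}$ spends inside the compact set $K$. If $r_\theta q$ diverges on average in $\mathcal{H}$ in the sense defined in the introduction, this proportion tends to $0$ as $T \to \infty$, so the time average of $f$ along the orbit tends to $0$.

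Thus the set of $\theta \in [0, 2\pi)$ for which the time average of $f$ along $g_t r_\theta q$ vanishes contains the set of directions $\theta$ such that $r_\theta q$ diverges on average in the stratum $\mathcal{H}$. By Theorem~\ref{T1}, the latter has Hausdorff dimension exactly $\frac{1}{2}$, so the former has Hausdorff dimension at least $\frac{1}{2}$, which is precisely the corollary. There is no real obstacle here beyond the trivial observation that compact support of $f$ is what converts divergence on average into vanishing of the time average; the substantive content is entirely in Theorem~\ref{T1}.
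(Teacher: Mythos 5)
Your proof is correct and is exactly the intended (and essentially unique) deduction: since $f$ is compactly supported, the time average of $f$ along $g_t r_\theta q$ is dominated by $\|f\|_\infty$ times the fraction of time the orbit spends in $\mathrm{supp}(f)$, which vanishes when $r_\theta q$ diverges on average, so the set in question contains the divergent-on-average set and inherits the lower bound $\tfrac{1}{2}$ from Theorem~\ref{T1}. The paper leaves this corollary without an explicit proof precisely because this two-line observation is all that is needed.
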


\begin{rem}
For any quadratic differential the set of directions that diverge on average in $Q_{g,n}$ is  contained in the set of directions that diverge on average in the stratum. In al-Saqban-Apisa-Erchenko-Khalil-Mirzadeh-Uyanik~\cite{AAEKMU}, the authors adapted the techniques of Kadyrov-Kleinbock-Lindenstrauss-Margulis~\cite{KKLM} to show that the latter set has Hausdorff dimension at most $\frac{1}{2}$ (this result improves on results of Masur~\cite{Ma91,Ma}).  Therefore, the novelty of the current  work is establishing the lower bound of Hausdorff dimension $\frac{1}{2}$ for the set of directions that diverge on average in $Q_{g,n}$. 
\end{rem}

\begin{rem}
The methods of~\cite{AAEKMU} in fact show that the Hausdorff dimension of the set of directions that diverge on average in any open invariant subset of a stratum is at most $\frac{1}{2}$. Therefore, Theorem~\ref{T1} remains true when divergence on average is considered in a stratum with finitely many affine invariant submanifolds deleted.
\end{rem}

\begin{rem}
In the classical case of $\SL(2,\mathbb{R})/\SL(2,\mathbb{Z})$ it is known that the set of directions that diverge
on average has Hausdorff dimension $\frac{1}{2}$. In fact, the behavior of  a geodesic is determined by the continued fraction expansion of its endpoint $x=[a_0,a_1, a_2, \ldots]$ (see for instance Dani~\cite{Dani85}).  The geodesic diverges on average if and only if $\left(\prod_{i=1}^n a_i \right)^{\frac{1}{n}}$ goes to $\infty$  as $n\to\infty$ (see Choudhuri~\cite[Theorem 1.2]{Choudhuri17}) and this set has Hausdorff dimension $\frac{1}{2}$ by \cite[Theorem 1.2]{ALWW}.  In Cheung~\cite{Cheung2}, it is shown that the set of real numbers for which $a_n$ tends to $\infty$ at a certain prescribed rate has Hausdorff dimension $\frac{1}{2}$. Our construction in higher genus Teichm\"uller space is modeled on this construction.  
\end{rem}
\subsection*{Connection with Previous Results}

For any holomorphic quadratic differential $q$ on a Riemann surface $X$, every direction gives a foliation of the underlying Riemann surface. By Masur~\cite[Theorem 1.1]{Ma}, if the Teichm\"uller geodesic in a given direction is recurrent, then the foliation is uniquely ergodic. In particular, the non-uniquely ergodic directions - $\mathrm{NUE}(q)$ - are divergent directions. 

By results of Strebel~\cite{StrebelQD} and Katok-Zemlyakov~\cite{KatokZ}, the collection of directions with non-minimal flow - $\mathrm{NM}(q)$ - is countable. In~\cite{MaS}, the main theorem is that outside finitely many exceptional strata of quadratic differentials (the exceptions being the ones where every flat structure induced by a holomorphic quadratic differential has a holonomy double cover that is a translation covering of a flat torus) , there is a constant $\delta > 0$ depending on the stratum so that for almost-every quadratic differential $q$ in the stratum the set of directions with non-ergodic flow - $\mathrm{NE}(q)$ - has Hausdorff dimension exactly $\delta$. The sequence of inclusions is then
\[ \mathrm{NM}(q) \subseteq \mathrm{NE}(q) \subseteq \mathrm{NUE}(q) \subseteq D(q) \subseteq DA(q) \]
where $D(q)$ and $DA(q)$ are the set of directions that diverge (resp. diverge on average). The set $D(q)$ was shown to have measure zero in Kerckhoff-Masur-Smillie~\cite[Theorem 4]{KMS}.

The set $\mathrm{NUE}(q)$ was shown to have Hausdorff dimension at most $\frac{1}{2}$ by the main theorem of Masur~\cite{Ma}. Recently, Chaika-Masur~\cite{CM18} showed that for hyperelliptic components of strata of Abelian differentials this inequality is actually an equality for almost every Abelian differential. 

\begin{prob}
Is it the case that the Hausdorff dimension of $\mathrm{NUE}(q)$ is either $0$ or $\frac{1}{2}$ for all quadratic differentials $q$. 
\end{prob}

For all known examples, the dimension is either $0$ or $\frac{1}{2}$. Despite the fact that $\mathrm{NE}(q)$ has positive Hausdorff dimension for a full measure set of quadratic differentials (outside of finitely many exceptional strata), in each stratum there is a dense set of Veech surfaces, for which $D(q) = \mathrm{NM}(q)$ and hence is countable. The fact that $D(q)$ is positive dimensional for a full measure set of $q$ and zero-dimensional for a dense set of $q$ shows that an analogue of Theorem~\ref{T1} for divergent directions does not exist in general. 

\bold{Acknowledgements.} The authors thank Jon Chaika for suggesting this problem.  They also thank Alex Eskin and Kasra Rafi for helpful conversations. This material is based upon work supported by the National Science Foundation under Grant No. DMS-1607512. HM gratefully acknowledges their support. This material is also based upon work supported by the National Science Foundation Graduate Research Fellowship Program under Grant No. DGE-1144082. PA gratefully acknowledges their support.

%
%
%

%
%

\section{Cylinders and the Thick-Thin Decomposition}

Given a Riemann surface $X$ with a holomorphic quadratic differential there are two natural metrics on $X$ - the hyperbolic metric and the flat metric induced by the quadratic differential. Assume that the quadratic  differential has unit area. Fix $\delta > 0$ small enough so that two curves of hyperbolic-length less than $\delta$ are disjoint.

The thick-thin decomposition of the flat surface is defined as follows (for simplicity we only state the definition for an Abelian differential - $(X, \omega)$ - which will be sufficient for our purpose). Let $\gamma_k$ be the simple closed curves on $X$ of hyperbolic-length less than $\delta$. There is a geodesic representative of $\gamma_k$ in the flat metric on $(X, \omega)$. Either the flat-geodesic is unique or it is contained in a flat cylinder. In the first case, cut out the unique flat-geodesic from $(X, \omega)$ and in the second excise the entire cylinder. The resulting components are called $\delta$-thick-pieces. The size of a thick-piece is defined as the smallest flat length of a simple closed curve in the thick piece that is not homotopic to a boundary curve. These definitions are due to Rafi~\cite{Rafi07} who showed that in a thick piece the hyperbolic length of any closed curve is comparable to its flat length divided by the size of the piece. 

In Eskin-Kontsevich-Zorich~\cite[Geometric Compactification Theorem (Theorem 10)]{EKZ}, the following is established (we only state a version for Abelian differentials)

\begin{thm}\label{T:EKZ}
Let $(X_n, \omega_n)$ be any sequence of unit-area translation surfaces that are not contained in a compact subset of  a stratum of Abelian differentials. By passing to a subsequence assume that $(X_n, \omega_n)$ converges to a stable differential $\omega$ on a nodal Riemann surface $X$. Let $\delta_0$ be less than half the injectivity radius of $X$ in the hyperbolic metric on the desingularized surface. Then there is a subsequence of $(X_n, \omega_n)$ so that each thick component converges to a nonzero meromorphic quadratic differential when the flat metric on the thick component is renormalized so that its size is one.
\end{thm}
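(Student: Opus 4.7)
The plan is to pass to a subsequence that stabilizes the combinatorics of the thick-thin decomposition of $(X_n,\omega_n)$, rescale each thick component by its size, extract a subsequential limit by a normal families argument on compact subsets of the punctured limit, and finally identify the singularities at the nodes as meromorphic of controlled order.

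First I would arrange, by passing to a subsequence, that the topological type of the thick-thin decomposition is independent of $n$: there is a fixed collection of homotopy classes of simple closed curves that are pinched, and each thick piece $T_n^{(i)}$ is supported on a fixed topological subsurface with boundary. By Mumford--Deligne compactness of the moduli space of stable Riemann surfaces, a further subsequence of the underlying Riemann surfaces converges to a stable noded Riemann surface $X$ whose smooth components correspond bijectively to the thick pieces, with nodes corresponding to the pinched curves. The hypothesis on $\delta_0$ guarantees that the thick pieces are seen as compact regions of fixed topological type in each smooth component of $X$.

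Next, let $s_n^{(i)}$ denote the size of $T_n^{(i)}$ and set $\widetilde\omega_n^{(i)} := \omega_n|_{T_n^{(i)}}/s_n^{(i)}$. By definition, the shortest essential non-boundary closed curve in the rescaled flat metric has length exactly $1$. Rafi's comparability theorem, mentioned in the excerpt, implies that on any compact subset $K$ of the $i$-th smooth component of $X$ disjoint from a neighborhood of its punctures, hyperbolic length and rescaled flat length of essential closed curves are uniformly comparable. This forces the rescaled flat injectivity radius on $K$ to be bounded below uniformly in $n$, which in turn gives a uniform pointwise bound on $|\widetilde\omega_n^{(i)}|$ on $K$. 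Writing $\widetilde\omega_n^{(i)} = f_n\,dz$ in a fixed conformal chart near each point of $K$, Montel's theorem produces a normal family, and a diagonal extraction yields a holomorphic limit $\widetilde\omega^{(i)}$ on the entire $i$-th smooth component of $X$ minus its punctures.

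It remains to analyze $\widetilde\omega^{(i)}$ at each puncture and to establish non-triviality. Near a puncture the local model is either a flat cylinder of diverging modulus or a degenerating saddle-connection neighborhood; in either case the rescaling by $s_n^{(i)}$ keeps the boundary holonomy and circumference bounded, so $\widetilde\omega^{(i)}$ extends meromorphically across the puncture with pole order controlled by the combinatorics of the stratum (which is fixed). Squaring then gives the required meromorphic quadratic differential on the smooth component. Non-triviality is immediate from the size normalization: the simple closed curve realizing the size has rescaled flat length $1$ and persists as a non-trivial essential curve in the limit, so the period of $\widetilde\omega^{(i)}$ along it has modulus bounded below, ruling out the zero differential.

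The main obstacle is the last step, namely the local analysis at the punctures. One must leverage the fact that the stratum is fixed (bounding the total order of zeros) together with the unit-area normalization and the size rescaling to bound the pole orders that can arise at nodes and, crucially, to rule out essential singularities. Everything else is a compactness extraction modeled on the proof of Mumford's theorem in the Deligne--Mumford setting; the role of the size normalization is precisely to prevent the rescaled differentials from collapsing to zero or blowing up uniformly on the thick piece.
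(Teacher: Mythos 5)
This statement is not proved in the paper: it is quoted directly as Theorem~10 of Eskin--Kontsevich--Zorich~\cite{EKZ} (stated for Abelian differentials), so there is no internal proof to compare against. What the paper does rely on, and records later in the proof of Proposition~\ref{P:theta}, is that the \emph{published} proof in~\cite{EKZ} proceeds by producing, after passing to a subsequence, a triangulation of each thick piece by saddle connections of uniformly bounded combinatorial complexity whose interior edges have length comparable to the size $\lambda$ of that piece, and whose edge holonomies converge. The limit differential on each rescaled thick piece is then assembled directly from these converging saddle-connection data. Your proposal takes a genuinely different route, via Montel's theorem on the fixed smooth components of the Deligne--Mumford limit.

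There is a real gap in the normal-families step as written. Montel requires local uniform bounds on $|\tilde\omega_n^{(i)}|$ on compacta in the limiting component, and you only justify this by a lower bound on the rescaled flat injectivity radius deduced from Rafi's comparison. A lower bound on injectivity radius does not by itself bound the conformal factor of the metric from above; you also need an upper area bound for the rescaled thick piece. That upper bound is true but nontrivial: it uses that the flat area of a thick piece is $O(\lambda^2)$ where $\lambda$ is its size (again a consequence of Rafi's comparison together with the bounded hyperbolic diameter of the thick part, or of the boundedness of the saddle-connection triangulation), so that after dividing by $\lambda^2$ the area is uniformly bounded and a flat disk of definite radius at an interior point forces a pointwise bound on $|\tilde\omega_n^{(i)}|$. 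Without this, the sequence could in principle blow up on compacta even with the size normalization. You correctly flag the removable-singularity analysis at the nodes as the main remaining obstacle, but I would say the area step above is at least as serious a gap, and unlike the node analysis you do not flag it. The triangulation approach of~\cite{EKZ} sidesteps both issues: boundedness of edge lengths in terms of the size gives the area and pointwise bounds for free, and the boundary behavior at nodes is read off from the degenerating boundary saddle connections, which is exactly the extra input your argument would need to pin down the pole orders.
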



\begin{defn}
Suppose that $A$ is an annulus around a curve $\gamma$ on $X$. The annulus $A$ is called regular if it is of the form $\{ p : d(p, \gamma) < r \}$ for some $r$ where $d(\cdot, \cdot)$ denotes distance in the flat metric. The annulus is primitive if additionally it contains no singularities in its interior. If $A$ is a primitive regular annulus that is not a flat cylinder, then define $\mu(A) := \log \left( \frac{|\gamma_o|}{|\gamma_i|} \right)$ where $|\cdot|$ denotes flat length and $\gamma_o$ (resp $\gamma_i$) is the longer (resp. shorter) boundary curve of $A$ and is called the outer (resp. inner) curve of $A$. This definition agrees with the one made in Minsky~\cite{Min92} up to a multiplicative constant that only depends on the stratum containing $(X, \omega)$. 
\end{defn}

\begin{lemma}\label{L:size2}
Under the hypotheses of Theorem~\ref{T:EKZ}, every flat cylinder in $(X_n, \omega_n)$ around a $\delta_0$-hyperbolically short curve has the length of its core curve tend to $0$ as $n \ra \infty$. 
\end{lemma}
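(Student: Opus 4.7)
The plan is to argue by contradiction. Fix a sequence of $\delta_0$-hyperbolically short curves $\gamma_n$ on $X_n$ lying in flat cylinders $C_n$, and write $\ell_n, h_n$ for the circumference and height of $C_n$. Suppose $\ell_n\not\to 0$; after passing to a subsequence, $\ell_n\geq c>0$. The unit area bound $\ell_n h_n\leq 1$ then forces $h_n\leq 1/c$, so the modulus $h_n/\ell_n\leq 1/c^2$ of $C_n$ stays bounded.

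First, I would upgrade the hypothesis $\ell_h(\gamma_n)\leq \delta_0$ to $\ell_h(\gamma_n)\to 0$. Because $\delta_0$ is chosen less than half the injectivity radius of the desingularized nodal limit $X$, the systole of $X$ exceeds $4\delta_0$; hence if $\gamma_n$ represented a non-pinched homotopy class on $X$, then convergence of the hyperbolic metrics on the hyperbolically thick part would make $\ell_h(\gamma_n)$ approach a value exceeding $\delta_0$, contradicting the hypothesis. Therefore $\gamma_n$ must be pinched in $X$, and $\ell_h(\gamma_n)\to 0$.

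The heart of the argument is Minsky's estimate from \cite{Min92} (compare Rafi~\cite{Rafi07}): for a short curve $\gamma$ on a flat surface, $\ell_h(\gamma)^{-1}$ is comparable to the sum of the modulus of the maximal flat cylinder around $\gamma$ and the quantities $\mu(A_i)$ for the maximal expanding primitive regular annuli in the two adjacent thick pieces. The cylinder contribution is bounded above by $1/c^2$. For each expanding annulus $A$ in a thick piece $T_n$ of size $s_n$, the outer curve has length $O(s_n)$, so $\mu(A)\leq \log(s_n/\ell_n)+O(1)$. Theorem~\ref{T:EKZ} furnishes a uniform upper bound on $s_n$: the renormalized area $\Area(T_n)/s_n^2$ converges to the positive area of the limit meromorphic differential (or diverges to infinity if the limit has a double pole, in which case $s_n\to 0$), and since $\Area(T_n)\leq 1$, one concludes $s_n\leq S$ for some $S$. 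Hence the expanding annulus contribution is also bounded above in terms of $S$ and $c$.

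Together these bounds force $\ell_h(\gamma_n)^{-1}$ to stay bounded, contradicting $\ell_h(\gamma_n)\to 0$. Therefore $\ell_n\to 0$. The main obstacle is invoking the Minsky--Rafi comparability in the precise form that simultaneously accounts for the flat cylinder and the expanding annuli, and verifying the size bound from Theorem~\ref{T:EKZ}; neither step is deep, but both rely on the thick-thin framework developed for flat surfaces.
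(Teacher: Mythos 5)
Your proof is essentially correct and reaches the same conclusion, but the route through the key estimate differs from the paper's. The paper controls the modulus $\mu(A_n)$ of the maximal expanding annulus by a purely elementary area argument: the width of $A_n$ is at most $a_n/\ell_n$ because the area of an expanding annulus with inner boundary of length $\ell_n$ and width $d$ is at least $\ell_n d$, and this yields $|\gamma_o| \leq 2\ell_n + 2\pi M a_n/\ell_n$, hence a bound on $\mu(A_n)$ that depends only on $\ell_n$ and the total area. In your version, you instead bound the outer curve by $O(s_n)$, which packages a Rafi-style comparability of the rescaled thick piece, and this forces you to take the extra step of bounding $s_n$ uniformly via Theorem~\ref{T:EKZ} (the renormalized-area argument, handling the double-pole case where $s_n\to 0$ separately). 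The paper's bound is self-contained and does not need any control of $s_n$; your version is correct but relies on a stronger black box that the outer boundary of a maximal primitive regular annulus in a thick piece of size $s_n$ has flat length $O(s_n)$ --- this is true and in the spirit of Rafi's comparability lemmas, but it is not an immediate consequence of the definition of size and would deserve a citation or a short justification (e.g., noting that the width of the annulus is at most the flat diameter of the thick piece, and that this diameter is $O(s_n)$ because the rescaled thick piece converges to a fixed geometry). You also add the step of upgrading $\ell_h(\gamma_n)\leq\delta_0$ to $\ell_h(\gamma_n)\to 0$; the paper takes this implicitly, and your justification (non-pinched classes have persistent length exceeding $\delta_0$ by the injectivity-radius choice) is the right one. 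Finally, a minor point of attribution: what you call ``Minsky's estimate'' for $\ell_h(\gamma)^{-1}$ is really Minsky's extremal-length estimate composed with Maskit's comparison of extremal and hyperbolic length, which is how the paper structures it.
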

\begin{proof}
By Maskit~\cite[Corollary 2]{Maskit}, since the $\delta_0$-hyperbolically short curves have lengths tending to zero in the hyperbolic metric, their extremal length also tends to zero as $n \ra \infty$. The extremal length of a curve $\gamma$ is defined to be the reciprocal of the modulus of the largest topological annulus embedded in the hyperbolic surface whose waist curve is freely homotopic to $\gamma$.

Therefore, each $\delta_0$-hyperbolically short curve $\gamma$ is contained in a topological annulus whose modulus tends to $\infty$ as $n \ra \infty$. By Minsky~\cite[Theorem 4.5 and 4.6]{Min92} (note that the inequality $\leq m_0$ should be $\geq m_0$ in the statement of the Theorem 4.6), either $\gamma$ is contained in a flat cylinder whose modulus is unbounded in $n$ or there is a primitive regular annulus $A_n \subseteq (X_n, \omega_n)$ contained in a thick piece whose core curve is homotopic to $\gamma$ and so that $\mu(A_n)$ tends to $\infty$ as $n$ increases.

Notice that if the modulus of the flat cylinder containing $\gamma$ tends to $\infty$ then the flat length of $\gamma$ tends to zero since each $(X_n, \omega_n)$ is unit-area. Therefore, suppose that for each $n$ there is a primitive regular annulus $A_n$ whose core curve is homotopic to $\gamma$ and so that $\mu(A_n)$ is unbounded in $n$.

Let $\ell_n$ be the flat length of $\gamma$ on $(X_n, \omega_n)$ and let $a_n$ be the area of the thick piece containing $A_n$. The flat distance across $A_n$ is at most $h_n:=\frac{a_n}{\ell_n}$. The flat length of the outer curve $A_n$ in the flat metric is at most $2 \ell_n + 2\pi M h_n$ where $M$ is some integer only depending on the stratum. Therefore, $\mu(A_n) \leq \log \left(2 + \frac{2\pi M}{\ell_n} \right)$.  Since $\mu(A_n)$ is unbounded in $n$, $\ell_n \ra 0$ as $n \ra \infty$. 
\end{proof}

A similar argument to the one above is given in Choi-Rafi-Series~\cite[Corollary 5.4]{CRS08}. 

\begin{defn} Fix $\delta > 0$ and $c \in (0, 1)$. A cylinder is $(\delta,c)$-thin if its circumference is at most $\delta$ and its area is at least $c$.  A half-translation surface is said to belong to the $(\delta, c)$-thick part of a stratum if it contains no $(\delta,c)$-thin cylinders. 
\end{defn}

\begin{rem}
We remark that  the $(\delta,c)$-thick set is {\em not} compact.  One can have a sequence of surfaces containing cylinders of circumferences going to $0$ and areas less than $c$ that lie in the $(\delta,c)$-thick part. These sequences enter what is usually referred to as the thin set, i.e. the set where a curve is short with no reference to area.  The basic part of our construction will be given a cylinder $\beta$ of reasonably large area, to use $\beta$  to find lots of directions that may enter the $(\delta,c)$-thin part,  but then return to the  thick set allowing us to find further cylinders of large area.  We use these sequences of   cylinders to build a Hausdorff dimension $\frac{1}{2}$ set of directions whose geodesics spend most of their time in the usually defined thin set where a curve is short.
 \end{rem}

%
%

\begin{lemma}\label{L:find-cylinder}
Fix a stratum $\strata$ of quadratic differentials, an open set $I$ on the unit circle, a positive constant $\delta$, and positive constants
\begin{enumerate}
\item $c_1 < \frac{1}{3g-3}$  
 \item $c_2 < \frac{\lambda}{g\left(2g + |s| - 2\right)}$ where $s$ is the singular set and where $\lambda = 1 - (3g-3)c_1$.
\end{enumerate}
Then there is an $L$ so that for any $(\delta, c_1)$-thick unit area surface in $\strata$ there is a cylinder of area at least $c_2$ whose core curve has length at most $L$ and whose holonomy makes an angle lying in $I$. 
\end{lemma}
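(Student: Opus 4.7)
The plan is to argue by contradiction using the geometric compactification of Theorem~\ref{T:EKZ}. Suppose no such $L$ exists; then there is a sequence $(X_n, \omega_n) \in \strata$ of $(\delta, c_1)$-thick unit area surfaces on which every cylinder with holonomy direction in $I$ and area at least $c_2$ has core length at least $n$. Passing to a subsequence, either $(X_n, \omega_n)$ stays in a compact subset of $\strata$ or it leaves every compact set.

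In the compact case, the sequence converges in $\strata$ to some $(X, \omega)$. Since completely periodic directions are dense on any flat surface of finite area, I would pick one, $\theta_\infty \in I$, for $(X, \omega)$. In this direction $(X, \omega)$ decomposes into finitely many cylinders; the largest has area at least $\lambda/(g(2g+|s|-2))$ and finite core length $\ell$. By continuity of the flat structure under convergence in $\strata$, this cylinder persists on $(X_n, \omega_n)$ for large $n$ in a direction still in $I$, with area and core length close to those on the limit, contradicting the hypothesis.

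In the divergent case, Theorem~\ref{T:EKZ} produces, after passing to a subsequence, a limit stable differential on a nodal Riemann surface. The counting step goes as follows. By the collar lemma there are at most $3g-3$ disjoint simple closed curves of hyperbolic length less than $\delta_0$ on $X_n$, where $\delta_0$ is as in Theorem~\ref{T:EKZ}, and each such curve is either the core of a flat cylinder or lies in a thick piece. By Lemma~\ref{L:size2}, the flat cylinders around these $\delta_0$-hyperbolically short curves have core length tending to zero, hence for $n$ large have circumference less than $\delta$; combined with the $(\delta, c_1)$-thick hypothesis, each such flat cylinder has area less than $c_1$. Therefore the total area in these flat cylinders is less than $(3g-3)c_1 = 1 - \lambda$, so the thick pieces together carry area at least $\lambda$. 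A standard Euler characteristic count bounds the number of thick pieces by $2g + |s| - 2$, so at least one thick piece $Y_n$ carries area at least $\lambda/(2g + |s| - 2)$. Rescaling $Y_n$ by the reciprocal of its size, the rescaled thick pieces converge to a nonzero meromorphic quadratic differential $(\overline{Y}, \overline{q})$ of finite area and size one. I would pick a completely periodic direction $\theta_\infty \in I$ on $(\overline{Y}, \overline{q})$; the decomposition in this direction has at most $g$ parallel cylinders, so pigeonhole yields a cylinder of area at least $\lambda/(g(2g+|s|-2)) > c_2$ in the rescaled metric. Since the unrescaled area of $Y_n$ is bounded above by $1$ and below by $\lambda/(2g + |s| - 2)$, its size is pinched between two positive constants depending only on the stratum; hence for $n$ large this cylinder transfers to a cylinder on $(X_n, \omega_n)$ with direction in $I$, area at least $c_2$, and core length bounded by some $L$ independent of $n$, giving the contradiction.

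The main obstacle is the divergent case, and in particular transferring the cylinder produced on the rescaled limit to a cylinder on $(X_n, \omega_n)$ in such a way that both the area lower bound $c_2$ and the uniform upper bound $L$ on the core length survive in the original unit-area metric. This requires controlling the rescaling factor using the unit area constraint together with the area lower bound on the largest thick piece. It also implicitly relies on the bound of $g$ for the number of parallel cylinders in any given direction on a meromorphic quadratic differential of finite area, which accounts for the factor of $g$ in the denominator of the bound on $c_2$.
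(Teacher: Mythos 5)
Your overall strategy---argue by contradiction, pass to the geometric compactification of Theorem~\ref{T:EKZ}, use Lemma~\ref{L:size2} and $(\delta,c_1)$-thickness to show the thin part has small area so that some thick piece carries definite area, then produce a cylinder on the rescaled limiting thick piece and pull it back---is exactly the paper's approach. However, the step where you produce the cylinder on the limit is different from the paper's and contains a gap.

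The paper does not reason via density of completely periodic directions. It invokes Vorobets' theorem~\cite[Theorem~1.5]{Vorobets03} a second time, applied to the rescaled limiting thick piece: that theorem directly yields, for \emph{any} open arc $I$ on the circle and any unit-area translation surface, a cylinder with holonomy direction in $I$ and area at least $\frac{1}{2g+|s|-2}$. Combined with the paper's count that the thick part has at most $g$ components (each definite-area thick piece degenerates to an Abelian differential of positive area on a closed Riemann surface, hence has genus at least one, and the genera sum to at most $g$), the largest thick piece has area at least $\lambda/g$, so Vorobets produces a cylinder of absolute area at least $\frac{\lambda}{g(2g+|s|-2)}$. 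Also, on the compact side, the paper avoids a case split entirely: it defines $\ell$ to be the shortest length of a Vorobets cylinder in direction $I$ and notes $\ell$ is bounded on compact subsets because cylinders persist on open sets; divergence of $\ell(X_n,\omega_n)$ forces the sequence to leave every compact set.

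Your replacement for the Vorobets step has two problems. First, the claim that a cylinder decomposition of the limiting thick piece has at most $g$ parallel cylinders is false in general; the sharp bound for an Abelian differential of genus $h$ with $m$ singularities is $h+m-1$, which exceeds $g$ as soon as there are at least two singularities. So your pigeonhole does not deliver a cylinder of the required area. Second, you bound the number of thick pieces by $2g+|s|-2$ (an Euler characteristic count), while the paper uses the sharper bound $g$ via the genus-additivity observation above; your weaker bound on the number of pieces, combined with the incorrect cylinder count, would need both errors to cancel exactly, and they do not. Replacing both of your counts with the paper's (at most $g$ thick pieces; Vorobets' $\frac{1}{2g+|s|-2}$ on the unit-area rescaled piece) repairs the argument. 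Note also that you are implicitly relying on the density of completely periodic directions in an arbitrary direction set $I$, which, while true, is a considerably heavier input than Vorobets' quantitative statement and does not come with the needed area lower bound; using Vorobets avoids both issues at once.
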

\begin{proof}

Let $\strata_{(\delta, c_1)}$ be the locus of $(\delta, c_1)$-thick translation surfaces in $\strata$. By Vorobets \cite[Theorem 1.5]{Vorobets03}, for every unit area translation surface in $\strata$ there is a cylinder of area at least $\frac{1}{2g+|s|-2}$ whose core curve has holonomy that makes an angle of $I$ (call this set of cylinders $\cC$). Let $\ell: \strata \ra \R$ be the function that records the shortest length of a cylinder in $\cC$.  Since cylinders persist on open subsets of $\strata$, it follows that $\ell$ is bounded on compact subsets of $\strata$. 

Suppose to a contradiction that the claim fails. It follows that there is a sequence $(X_n, \omega_n)$ of translation surfaces in $\strata_{(\delta, c_1)}$ so that the $\ell\left( X_n, \omega_n \right) \ra \infty$. Since $\ell$ is bounded on compact subsets of $\strata$, it follows that $(X_n, \omega_n)$ leaves all compact subsets of $\strata$. By passing to a subsequence we suppose that the sequence converges to $(X, \omega)$ in the geometric compactification. 

Let $\delta_0$ be less than half the injectivity radius of the desingularized hyperbolic metric on $X$, and suppose that the $\delta_0$ thick pieces converge as in Theorem~\ref{T:EKZ}. We claim that there is a thick piece that has definite area on each $(X_n, \omega_n)$. In the thick-thin decomposition, the only positive-area subsurfaces that are not contained in a thick piece are flat cylinders around $\delta_0$-hyperbolically short curves. However, by Lemma~\ref{L:size2} these cylinders have the length of their core curves tend to zero along the sequence $(X_n, \omega_n)$. By truncating an initial segment of the sequence we may suppose that these core curves are always less than length $\delta$ in flat length. Since $(X_n, \omega_n)$ are $(\delta, c_1)$-thick surfaces we have that the thin part has area at most $(3g-3)c_1$. Therefore, the thick part has area at least $\lambda$. Moreover, the thick part has at most $g$ components. In particular, it contains some translation surface of area $\frac{\lambda}{g}$. 

By Vorobets \cite[Theorem 1.5]{Vorobets03}, this thick piece contains a cylinder in direction $I$ of area at least $\frac{\lambda}{g\left(2g + |s| - 2\right)}$. Therefore, pulling it back to $(X_n, \omega_n)$ (after again truncating a finite initial subsequence), produces a cylinder in $\cC$ along the subsequence of bounded length, which is a contradiction. 
\end{proof}

\begin{prop}\label{P:theta}
Let $\strata$, $c_1$, $c_2$, and $\delta$ be as in Lemma \ref{L:find-cylinder} and let $0 < \theta_1 < \pi$. Suppose too that $c_1 \leq c_2$. Then there are positive constants $L$ and $\theta_0$ so that the following holds.  Let $\cC$ be the collection of cylinders of area at least $c_2$ and circumference of length strictly less than $L$. For any $(\delta, c_1)$-thick surface there is a cylinder in $\cC$ that is bounded away from the horizontal by at least $\theta_1$ and bounded away from any shorter cylinder in $\cC$ by at least $\theta_0$. 
\end{prop}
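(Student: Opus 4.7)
The plan is to apply Lemma~\ref{L:find-cylinder} to obtain a candidate cylinder whose direction lies in the prescribed arc, and then to control its angular separation from any shorter cylinder via a geometric intersection estimate. First I would apply Lemma~\ref{L:find-cylinder} to the open arc $I := \{\theta \in S^1 : d(\theta, 0) > \theta_1\}$ of directions at spherical distance greater than $\theta_1$ from horizontal. This yields a universal length bound $L$ (depending only on the stratum and on $\delta, c_1, c_2, \theta_1$) such that every $(\delta, c_1)$-thick surface $X$ contains a cylinder of area $\geq c_2$ and circumference $\leq L$ with direction in $I$. On each such $X$, let $C^* = C^*(X)$ denote a cylinder in $\cC$ with direction in $I$ of smallest circumference among such cylinders; since the direction of $C^*$ lies in $I$, the first condition of the Proposition is satisfied.

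For the second condition I would fix any strictly shorter cylinder $C' \in \cC$ on $X$. Minimality of $C^*$ among cylinders in $\cC$ with direction in $I$ forces the direction of $C'$ to lie outside $I$; in particular $C'$ cannot be parallel to $C^*$. Let $\alpha$ denote the angular separation of the two directions. The main geometric input is the identity
\[
\Area(C^* \cap C') \;=\; i(\gamma^*, \gamma') \cdot \frac{h^* h'}{\sin \alpha},
\]
valid when the core curves $\gamma^*, \gamma'$ intersect transversally, where $h^*, h'$ are the heights of the cylinders and $i$ denotes geometric intersection number. Combined with $\Area(C^* \cap C') \leq \min(A(C^*), A(C'))$, the relation $h = A/\ell$, and the bounds $A \geq c_2$ and $\ell \leq L$, this yields $\sin \alpha \geq c_2/L^2$ whenever $i(\gamma^*, \gamma') \geq 1$. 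Setting $\theta_0 := \arcsin(c_2/L^2) > 0$ then handles the intersecting case.

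The main obstacle will be the disjoint case $i(\gamma^*, \gamma') = 0$, in which the identity above is vacuous. To rule out a disjoint strictly shorter cylinder at arbitrarily small angle to $C^*$, I would argue by contradiction using the geometric compactification of Theorem~\ref{T:EKZ}. Such a failure would yield a sequence of $(\delta, c_1)$-thick surfaces $X_n$ carrying cylinders $C_n^*, C_n'$ of bounded length and area at least $c_2$, disjoint, with $|C_n'| < |C_n^*|$ and angular separation tending to zero. Passing to a subsequence, the thick pieces of $X_n$ converge to a limit surface $X_\infty$ on which $C_n^*$ and $C_n'$ persist as two parallel disjoint cylinders (common direction exactly on $\partial I$) of area at least $c_2$. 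If these merge into a single cylinder in the limit, then the discreteness of cylinders on a flat surface forces $C_n^* = C_n'$ for large $n$, contradicting the strict length inequality. If they remain distinct, applying Lemma~\ref{L:find-cylinder} on $X_\infty$ with the arc $I$ produces a cylinder with direction strictly inside $I$; pulling this cylinder back to $X_n$ via persistence and comparing lengths against the minimality of $C_n^*$ yields a contradiction. The final value of $\theta_0$ is the minimum of $\arcsin(c_2/L^2)$ and the bound extracted from this compactness argument.
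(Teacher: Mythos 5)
Your intersection estimate $\Area(C^* \cap C') = i(\gamma^*,\gamma')\,h^*h'/\sin\alpha$, combined with $\Area(C^*\cap C') \leq A(C')$ and $h = A/\ell$, does give $\sin\alpha \geq c_2/L^2$ whenever the two cylinders cross, which is a clean elementary bound not used in the paper. However, the disjoint case is not a corner case that compactness can dismiss; it is the genuinely hard part, and your argument there has a real gap in two respects.

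First, the bad configuration you are trying to rule out actually occurs. One can build a $(\delta,c_1)$-thick surface from two nearly-independent pieces, one carrying a cylinder $C^*$ of direction just inside your $I$ and the other carrying a strictly shorter disjoint cylinder $C'$ of direction at angle $\alpha$ from $C^*$, with $\alpha$ as small as you like. So a contradiction-by-compactness cannot possibly succeed: you cannot show the sequence does not exist. The logical structure has to be that in this configuration you select a \emph{different} cylinder as your witness, not that the configuration is impossible. This is precisely what the paper does: it widens the arc to directions at distance $\theta_1+\epsilon$ from horizontal, and in the degenerate case it abandons the requirement that the chosen cylinder's direction lie in $I$; it instead tracks homotopy classes via the Eskin--Kontsevich--Zorich triangulation of thick pieces, picks the shortest cylinder whose homotopy class has the same limiting direction as the degenerate one, and uses the $\epsilon$-slack to keep the direction at least $\theta_1$ from horizontal for large $n$. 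Because your $I$ is the arc of directions strictly more than $\theta_1$ from horizontal, with no slack, switching to $C'$ is not even available to you: $C'$ has direction outside $I$, hence within $\theta_1$ of horizontal, so it fails the first requirement of the proposition.

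Second, even granting your compactness setup, the deduction of a contradiction when the limiting cylinders ``remain distinct'' does not go through. Lemma~\ref{L:find-cylinder} on $X_\infty$ gives a cylinder $\tilde C$ with direction in $I$ and bounded circumference, which persists to $X_n$; but minimality of $C_n^*$ only says $|C_n^*| \leq |\tilde C|$, which is no contradiction. There is also an unaddressed step: you need the cylinders $C_n^*, C_n'$ to actually persist to a thick piece of the limit and not escape into the degenerating part; the paper devotes its Steps 1--3 precisely to establishing that cylinders in $\cC$ stay inside a single thick piece whose size does not go to zero, using Lemma~\ref{L:size2} and the triangulation estimates. Your proposal asserts persistence without proof.
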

\begin{proof}
Choose $\epsilon > 0$ so that $\theta_1 + \epsilon < \pi$ and let $I$ be the collection of angles that are at least $\theta_1+\epsilon$ from the horizontal. Let $L$ be the length produced by Lemma \ref{L:find-cylinder}. 

Suppose to a contradiction that $(X_n, \omega_n)$ is a sequence along which the conclusion fails. Suppose without loss of generality that the sequence has a limit $(X, \omega)$ in the geometric compactification.  Let $\delta_0$ be half the injectivity radius of $X$ in the hyperbolic metric. 

By the proof of Eskin-Kontsevich-Zorich~\cite[Geometric Compactification Theorem (Theorem 10)]{EKZ}, for sufficiently large $n$ and after passing to a subsequence there is a triangulation of the thick pieces of $(X_n, \omega_n)$ so that for constants $C_1$ and $C_2$,
\begin{enumerate}
\item The triangulations have the same combinatorial type for all $n$. 
\item There are fewer than $C_1$ edges of the triangulation. 
\item The edges of the triangulation are saddle connections.
\item The saddle connections that do not belong to the boundary of the thick piece have length bounded below by $\frac{\lambda}{2}$ and above by $C_2 \lambda$ where $\lambda$ is the size of the thick piece. 
\item The holonomy of the edges of the triangulation converge as $n$ tends to infinity. 
\end{enumerate} 
Assume again after passing to a subsequence that the sizes of each thick piece converge. 

\noindent \textbf{Step 1: For sufficiently large $n$, a cylinder in $\cC$ does not intersect a thick piece whose size tends to zero}

Each cylinder in $\cC$ has height at least $\frac{c_2}{L}$. Take $n$ sufficiently large so that for each thick piece whose size is tending to zero it is triangulated by saddle connections of length less than $\frac{c_2}{L}$. Then $\cC$ cannot intersect this thick piece since it cannot cross a saddle connection of length less than $\frac{c_2}{L}$. 

\noindent \textbf{Step 2: For sufficiently large $n$, a cylinder in $\cC$ does not intersect a positive area thin piece}

These thin pieces are exactly flat cylinders around $\delta_0$-hyperbolically short curves. By Lemma \ref{L:size2}, the circumference of these flat cylinders tend to zero. Since each $(X_n, \omega_n)$ is $(\delta, c_1)$-thick, these cylinders must have area strictly less than $c_1 \leq c_2$ for sufficiently large $n$.  Hence no cylinder in $\cC$ can coincide with one of these cylinders. Moreover, the heights of the cylinders in $\cC$ are bounded below and so no cylinder in $\cC$ can cross them either (since the circumferences tend to zero). 

\noindent \textbf{Step 3: For sufficiently large $n$, a cylinder in $\cC$ is contained in a single thick piece}

Consider a thick piece whose size does not tend to zero. By Eskin-Kontsevich-Zorich~\cite[Geometric Compactification Theorem (Theorem 10)]{EKZ}, when this piece is rescaled by its size it converges to a meromorphic quadratic differential. However, since the size is bounded away from zero this quadratic differential has finite area, no boundary, and trivial linear holonomy, i.e. it is an Abelian differential on a closed Riemann surface. Therefore, the boundary of the thick piece necessarily consisted of saddle connections whose holonomy tended to zero as $n$ tended to $\infty$. Since cylinders in $\cC$ have height that is bounded below, it must be the case that $\cC$ cannot cross the boundary of a thick piece when $n$ is sufficiently large (i.e. when all the saddle connections in the boundary are sufficiently small). 

\noindent \textbf{Step 4: There is a finite collection of curves $S$ defined only in terms of the combinatorial type of the triangulation so that any cylinder belonging to $\cC$ on $(X_n, \omega_n)$ for sufficiently large $n$, has core curve homotopic to a curve in $S$.}

Recall that the triangulations of the thick pieces of $(X_n, \omega_n)$ all have the same combinatorial type and that all edges converge in length. Let $\ell$ be the supremum of the edge lengths that appear in these triangulations for all $(X_n, \omega_n)$. Since the smallest that the height of a cylinder in $\cC$ can be is $\frac{c_2}{L}$, a cylinder in $\cC$ can only intersect an edge of the triangulation $\frac{\ell L}{c_2}$ times. Consider the finite collection $S$ of all paths through the triangulations of these thick parts that are (1) straight lines in each triangle, (2) connect midpoints of edges of the triangulation, and (3) cross any edge at most $\frac{\ell L}{c_2}$ times. For sufficiently large $n$, the core curves of the cylinders in $\cC$ are homotopic to a curve in $S$ and moreover, since the curves in $S$ are only defined in terms of the combinatorial type of the triangulation, they define piecewise geodesic curves on all $(X_n, \omega_n)$. Moreover, since the holonomy of the edges of the triangulation converge as $n$ tends to $\infty$, so does the holonomy of each element of $S$. 

\noindent \textbf{Step 5: One may choose a cylinder for each $n$ to derive a contradiction.}

Let $C_n$ be the shortest cylinder in $\cC$ whose holonomy belongs to $I$ (such a cylinder exists by Lemma \ref{L:find-cylinder}). After passing to a subsequence we may assume that $C_n$ corresponds to a fixed element $s_0 \in S$ for all $n$. Let $h_n(s_0)$ be the holonomy of $s_0$ on $(X_n, \omega_n)$. If the argument of $h_n(s_0)$ converges to an angle $\theta$ in the interior of $I$, then on each $(X_n, \omega_n)$ we notice that by choosing $C_n$ on $(X_n, \omega_n)$ we have produced a cylinder in $\cC$, whose period makes an angle of $\theta_1$ from the horizontal, and which, for sufficiently large $n$, makes an angle of $\frac{d}{2}$ from any shorter cylinder in $\cC$ where $d$ is the distance from $\theta$ to the boundary of $I$. However, $(X_n, \omega_n)$ was chosen so that along the sequence no such cylinder could be found. This is a contradiction. 

Suppose now that the argument of $h_n(s_0)$ converges to the boundary of $I$, without loss of generality suppose that it converges to $\theta_1 + \epsilon$.  Let $S'$ be the subset of $S$ of paths whose holonomy has its argument converge to $\theta_1 + \epsilon$. Define
\[ \Lambda := \left\{ \lim_{n \ra \infty} \mathrm{arg} \left( h_n(s) \right) \right\}_{s \in S} \]
Let $d$ be the distance from $\theta_1 + \epsilon$ to the nearest distinct point in $\Lambda$ (and let it be $2\pi$ if there are no other distinct points). For each $n$, let $C_n$ be the shortest cylinder in $\cC$ whose core curve is homotopic to an element of $S'$. Notice that unlike in the previous case, the argument of the holonomy of $C_n$ might lie outside of $I$.  However, for sufficiently large $n$, the argument of the holonomy of $C_n$ will be bounded away the horizontal by $\theta_1$ and from any other element of $\cC$ by at least $\frac{d}{2}$. Again this is a contradiction. 
\end{proof}

\section{The Child Selection Process}
Make the following definition and assumption.

\begin{defn}
For any real numbers $t$ and $\theta$ set
\[ g_t := \begin{pmatrix} e^t & 0 \\ 0 & e^{-t} \end{pmatrix} \qquad r_\theta := \begin{pmatrix} \cos \theta & -\sin \theta \\ \sin \theta & \cos \theta \end{pmatrix} \]
\end{defn}

\begin{ass}\label{A:basic1}
Fix a translation surface $(X, \omega)$ in a stratum $\strata$ of Abelian differentials on genus $g$ Riemann surfaces. Let $|\Sigma|$ denote the number of zeros of $\omega$. Fix $c$ so that setting $c_1 = c_2 = c$ satisfies the conditions of Lemma \ref{L:find-cylinder}. Fix $\delta, \theta_1$, and $M > 1$ so that $(X, \omega)$ is $(\delta,c)$-thick. Let $L$ and $\theta_0$ be the constants associated with $\strata, \delta, c$, and $\theta_1$ as in Proposition \ref{P:theta}.
\end{ass}

In the sequel, we will put successive conditions on the constants in this assumption. The following process - the child selection process - is the main object of study in the sequel. 

\begin{defn}[Child Selection Process] \label{D:CS}
 Consider the following process, which we will call the child selection process:
\begin{enumerate}
\item Let $\beta_0$ be a cylinder on $(X_0, \omega_0) := (X, \omega)$ of area at least $c$. Let $s$ be the saddle connection  determined  by the shortest cross curve of $\beta_0$ that makes an acute angle with the core curve.

\begin{defn}
Identifying $s$ and $\beta_0$ with their holonomy vectors on $(X_0, \omega_0)$ define $s_t := s + t \beta_0$ where $t$ is a real number contained in  $\left(\frac{2 \log|\beta_0|}{\delta}, \frac{M \log|\beta_0|}{2L} \right)$

\end{defn}

Note $s_t$ will define a saddle connection when $t$ is an integer.  In that case $s_t$ is formed by Dehn twisting $s_0$ $t$ times.  
\item Fix $t$ and rotate so that $s_t$ is vertical. Flow by $g_t$ so that $s_t$ has unit length. Call $s_t$ a protochild of $\beta_0$ and the new surface the protochild surface. Suppose that the protochild surface - call it $(X_1, \omega_1)$  - is $(\delta, c)$-thick. 
\item By Proposition \ref{P:theta} there is a cylinder $\beta_1$ on $(X_1, \omega_1)$ whose circumference is at most $L$, area is at least $c$, and whose core curve makes an angle of at least $\theta_1$ with the horizontal. Call this cylinder the child of $\beta_0$ corresponding to $s_t$. Similarly call $\beta_0$ the parent. We will use $\beta_1$ to refer to the cylinder on both $(X_0, \omega_0)$ and $(X_1, \omega_1)$. 
\end{enumerate}
\end{defn}

\begin{rem}
There are two major obstacles to iterating the child selection process. The first is that it is not clear that it is possible to find a cylinder and a protochild whose protochild surface remains $(\delta, c)$-thick. The second is that, even if one such direction exists, it is unclear that another can be found on the protochild surface. The first obstacle is addressed in Section \ref{S:started} and the second in Sections \ref{S:definite} and \ref{S:comparable}. The spacing between protochild directions is controlled in Section \ref{S:facts} and this is used to compute a lower bound on the Hausdorff dimension of divergent on average directions in Section \ref{S:T1}
\end{rem}

%
%

\section{A definite proportion of protochild surfaces are $(\delta, c)$-thick.}\label{S:definite}

Throughout this section we will continue to assume Assumption \ref{A:basic1} as well as the following:

\begin{ass} \label{A:starting1}
Suppose that $\delta < \frac{c}{576\sqrt{2}(g-1)}$ and suppose that $\beta_1$ is a parent cylinder on a $(\delta, c)$-thick surface $(X, \omega)$ such that $\log |\beta_1| > 1$.  Choose $M$ to be of the form $\frac{2^{m+2} L}{\delta}$ for a positive integer $m$.  This means that the interval of times in which a protochild is selected has the form  $\left( \frac{2 \log |\beta_1|}{\delta}, 2^{m} \frac{2 \log |\beta_1|}{\delta} \right)$. 
\end{ass}

\begin{defn}
Given two straight line segments $s$ and $s'$ in $(X, \omega)$ we will let $\theta(s, s')$ denote the angle between them. 
\end{defn} 

\begin{rem}
In the sequel, all lengths and angles will be measured on $(X, \omega)$ unless otherwise stated.
\end{rem}

\begin{lemma}\label{L:angle-between} 
Suppose that $s_t$ is a protochild of $\beta_1$ and that $\beta_2$ is a $(\delta, c)$-thin cylinder on the corresponding protochild surface. If on $(X, \omega)$ we have $\frac{|\beta_1|}{2\sqrt{2}} \leq |\beta_2|$, then $\beta_2$ cannot be parallel to $\beta_1$.
\end{lemma}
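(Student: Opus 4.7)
The plan is to exploit the $\SL(2,\R)$-invariance of the determinant of a pair of holonomy vectors: since both the rotation aligning $s_t$ with the vertical and the Teichm\"uller flow rescaling it to unit length lie in $\SL(2,\R)$, the quantity $\det(s_t,\beta_2)$ (computed as vectors in $\R^2$) has the same absolute value whether measured on $(X,\omega)$ or on the protochild surface $(X_1,\omega_1)$.

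Assume toward a contradiction that on $(X,\omega)$ the holonomy of $\beta_2$ is parallel to that of $\beta_1$, so as vectors $\beta_2 = \lambda\beta_1$ with $|\lambda| = |\beta_2|/|\beta_1|$. From $s_t = s + t\beta_1$ and the alternating property of $\det$,
\[ \det(s_t,\beta_2) \;=\; \det(s+t\beta_1,\,\lambda\beta_1) \;=\; \lambda\det(s,\beta_1). \]
Because $s$ is a saddle connection crossing the cylinder $\beta_1$ exactly once (going from one boundary component to the other), the component of $s$ perpendicular to $\beta_1$ equals the height of $\beta_1$, and therefore $|\det(s,\beta_1)| = \mathrm{Area}(\beta_1) \geq c$. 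Combined with the hypothesis $|\beta_2| \geq |\beta_1|/(2\sqrt{2})$, this yields
\[ |\det(s_t,\beta_2)| \;\geq\; \frac{c}{2\sqrt{2}}. \]

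By $\SL(2,\R)$-invariance the same lower bound persists on $(X_1,\omega_1)$, where $s_t$ has length $1$ by construction. The elementary estimate $|\det(s_t,\beta_2)| \leq |s_t|\cdot |\beta_2|$ on $(X_1,\omega_1)$ then forces the circumference of $\beta_2$ on the protochild surface to be at least $c/(2\sqrt{2})$. By Assumption \ref{A:starting1}, $\delta < c/(576\sqrt{2}(g-1)) \leq c/(2\sqrt{2})$ (using $g \geq 2$), so the circumference of $\beta_2$ on $(X_1,\omega_1)$ strictly exceeds $\delta$, contradicting the hypothesis that $\beta_2$ is $(\delta,c)$-thin on the protochild surface.

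There is no real obstacle here; the argument is essentially an invariance calculation. The only points requiring care are the identification $|\det(s,\beta_1)| = \mathrm{Area}(\beta_1)$ coming from $s$ being a cross curve of the cylinder, and verifying that the constant $c/(2\sqrt{2})$ produced by this calculation really does exceed $\delta$ under the standing hypothesis of Assumption \ref{A:starting1}.
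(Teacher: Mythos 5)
Your proof is correct and relies on exactly the same ingredients as the paper's: the $\SL(2,\R)$-invariance of the cross product $\det(s_t,\cdot)$, the identification $|\det(s,\beta_1)|=\operatorname{Area}(\beta_1)\geq c$, the hypothesis $|\beta_2|\geq|\beta_1|/(2\sqrt{2})$, and the bound $\delta< c/(2\sqrt{2})$ from Assumption~\ref{A:starting1}. The paper just packages this as a direct chain $\sin\theta(\beta_2,s_t)\leq\frac{\delta}{|\beta_2||s_t|}\leq\frac{2\sqrt{2}\delta}{|\beta_1||s_t|}<\frac{c}{|\beta_1||s_t|}\leq\sin\theta(\beta_1,s_t)$ rather than as a contradiction, so the two arguments are essentially identical.
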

\begin{proof}
We have 
\[ \sin \theta(\beta_2, s_t) \leq \frac{\delta}{|\beta_2||s_t|} \leq \frac{ 2\sqrt{2} \delta }{|\beta_1||s_t|}< \frac{c}{|\beta_1||s_t|} \leq \sin \theta(\beta_1, s_t) \]
This implies $\beta_2$ cannot be parallel to $\beta_1$
\end{proof}


\begin{defn}\label{D:thin-set}
If $\beta$ is a $(\delta, c)$-thin cylinder on the protochild surface of  $s_t$ and $\frac{|\beta_1|}{2\sqrt{2}} \leq |\beta|$ then make the following definitions, 
\begin{enumerate}
\item By Lemma \ref{L:angle-between}, let $t_0$ be the real number such that $\beta$ points in the direction of $s_{t_0}$. 
\item Let $I(\beta, r)$ (resp. $I^h(\beta, r)$, $I^v(\beta, r)$) be the collection of $t$ for which $\beta$ has circumference (resp. with horizontal, vertical part) less than $r$ on the protochild surface corresponding to $s_t$. 
\item Define $I_1(\beta) := I(\beta, \delta)$, i.e. the collection of $t$ so that $\beta$ is $(\delta, c)$-thin on the protochild surface corresponding to $t$. Define $I_2(\beta) := I\left(\beta, \frac{c}{32} \right)$. Define $I_1^h$, $I_1^v$, $I_2^h$, and $I_2^v$ analogously.
\end{enumerate}
Note that it is possible that $t_0\notin I_1(\beta_1)$.
\end{defn}


 \begin{lemma} \label{L:basic-interval}
 Using the same notation as in Definition \ref{D:thin-set},
\[ I^h_1(\beta)= \left\{t:|t-t_0|<\frac{\delta |s_{t_0}|}{\mathrm{area}(\beta_1)|\beta|} \right\} \]
and similarly 
\[ I^h_2(\beta)=\left\{t:|t-t_0|\leq \frac{c|s_{t_0}|}{32\mathrm{area}(\beta_1)|\beta|} \right\}. \]
\end{lemma}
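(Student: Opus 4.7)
The plan is to compute the horizontal component of $\beta$'s holonomy vector on the protochild surface corresponding to $s_t$ directly, as a function of $t$, and then solve for when this drops below the specified threshold. Recall that the protochild surface is obtained from $(X,\omega)$ by first rotating so that $s_t$ becomes vertical and then applying $g_{\log |s_t|}$ so that $s_t$ has unit length. Under this composition, a vector $v$ on $(X,\omega)$ acquires horizontal component $|v|\sin\theta(v,s_t) = |s_t\wedge v|/|s_t|$ after the rotation, and the Teichm\"uller flow step then rescales the horizontal direction by $|s_t|$. Consequently the horizontal component of $\beta$'s holonomy on the protochild surface equals $|s_t \wedge \beta|$, where $\wedge$ denotes the signed $2$-dimensional cross product on $(X,\omega)$ viewed in $\mathbb{R}^2$.

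Next I would exploit the $t$-linearity $s_t = s + t\beta_1$ to expand
\[ s_t \wedge \beta = (s \wedge \beta) + t(\beta_1 \wedge \beta). \]
By the defining property of $t_0$ in Definition \ref{D:thin-set}, the vector $\beta$ is parallel to $s_{t_0}$, so $s_{t_0}\wedge\beta = 0$, and hence $s\wedge\beta = -t_0(\beta_1\wedge\beta)$. Combining yields
\[ |s_t\wedge\beta| = |t-t_0|\cdot|\beta_1\wedge\beta|. \]
Since $\beta$ is a scalar multiple of $s_{t_0}$ of magnitude $|\beta|/|s_{t_0}|$, and since $\beta_1\wedge(t_0\beta_1)=0$, I get
\[ |\beta_1\wedge\beta| = \frac{|\beta|}{|s_{t_0}|}\,|\beta_1\wedge s_{t_0}| = \frac{|\beta|}{|s_{t_0}|}\,|\beta_1\wedge s|. \]

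The last ingredient is the geometric identity $|\beta_1 \wedge s| = \mathrm{area}(\beta_1)$. This holds because $s$ is, by construction, the shortest saddle connection cross curve of $\beta_1$ making an acute angle with the core, so $s$ traverses $\beta_1$ exactly once (otherwise a shorter cross curve would exist). Therefore the component of $s$ perpendicular to $\beta_1$ equals the cylinder height, and the parallelogram spanned by $\beta_1$ and $s$ has the same area as the cylinder. Assembling the identities, the horizontal component of $\beta$ on the protochild surface of $s_t$ equals
\[ |t-t_0|\cdot \frac{|\beta|\,\mathrm{area}(\beta_1)}{|s_{t_0}|}, \]
and demanding this be less than $\delta$ (respectively $c/32$) gives exactly the two stated formulas for $I_1^h(\beta)$ and $I_2^h(\beta)$. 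The only potentially delicate point is confirming that the shortest cross curve intersects $\beta_1$ exactly once; this is immediate from the minimality condition, after which the argument is a direct cross-product calculation.
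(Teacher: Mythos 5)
Your argument is correct and is essentially the same as the paper's: both compute the horizontal component of $\beta$ after the rotate-and-flow step, exploit the linearity $s_t = s + t\beta_1$ together with the parallelism $\beta \parallel s_{t_0}$, and invoke the identity $|\beta_1 \wedge s| = \mathrm{area}(\beta_1)$ for a cross curve $s$. The paper routes the computation through $\sin\theta(s_t,\beta)=\sin\theta(s_{t_0},s_t)$ and the cross-product formula for $|s_{t_0}\times s_t|$, whereas you stay in cross-product notation throughout, which is a cosmetic difference only.
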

\begin{proof}
Let $t$ be a protochild. Rotate so $s_t$ is vertical and let $h$ be the horizontal component of $\beta$. Now  $$\frac{h}{|\beta|}=\sin\theta(s_t,\beta)=\sin\theta(s_{t_0},s_t)=\frac{\mathrm{area}(\beta_1)|t-t_0|}{|s_{t_0}||s_t|}.$$
 We now apply $g_t$ until $s_t$ has unit length.  The number $t$ belongs to $I_1^h(\beta)$ if and only if $h|s_t|<\delta$, equivalently,
 \[ \frac{\mathrm{area}(\beta_1)|t-t_0| |\beta|}{|s_{t_0}|}<\delta. \]
This proves (1).  The proof of (2) is identical.
\end{proof}

\begin{rem}\label{R:size-estimate}
Notice that when $|\beta_1| > 1$, which follows from Assumption \ref{A:starting1}, we have 
\[ t |\beta_1| \leq |s_t| \leq (t+1)|\beta_1|. \]
If $\frac{|\beta_1|}{2\sqrt{2}} \leq |\beta|$, then by Lemma \ref{L:basic-interval} the radius of $I^h(\beta, r)$ is bounded above by $\frac{4(t_0+1) r}{\sqrt{2} c}$. If $t_0 > 1$ then the bounds simplify to
\[ |s_t| \leq 2 |\beta_1| \quad \text{and} \quad \left| I^h(\beta, r) \right| \leq 2 \left( \frac{8t_0r}{\sqrt{2} c} \right) \]
\end{rem}

\begin{lemma} \label{L:interval2}
If $\frac{|\beta_1|}{2\sqrt{2}} \leq |\beta|$ and $I_1(\beta)$ intersects the interval $\left( \frac{2 \log |\beta_1|}{\delta}, 2^{m} \frac{2 \log |\beta_1|}{\delta} \right)$, then $I^h_2(\beta) \subseteq I_2^v(\beta)$.
\end{lemma}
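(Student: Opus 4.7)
The plan is to use the assumption that $I_1(\beta)$ meets the allowed time interval to extract a witness $t_\ast$ at which $\beta$ has circumference $\leq \delta$ on the protochild surface. Since circumference squared is the sum of the squares of the horizontal and vertical components, we simultaneously get $|s_{t_\ast}|\,|\beta|\sin\theta_{t_\ast}\leq \delta$ and $|\beta|\cos\theta_{t_\ast}/|s_{t_\ast}|\leq \delta$. Using the formula $\sin\theta_{t_\ast}=\mathrm{area}(\beta_1)|t_\ast-t_0|/(|s_{t_0}||s_{t_\ast}|)$ from the proof of Lemma~\ref{L:basic-interval}, the first bound says $t_\ast\in I_1^h(\beta)$; the second is the key size estimate $|\beta|\cos\theta_{t_\ast}\leq \delta|s_{t_\ast}|$.

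Next I will argue that $\cos\theta_{t_\ast}$ is essentially $1$. Combining the hypotheses $|\beta|\geq |\beta_1|/(2\sqrt{2})$ and $|s_{t_\ast}|\geq t_\ast|\beta_1|\geq (2\log|\beta_1|/\delta)|\beta_1|$ with the horizontal bound, one obtains $\sin\theta_{t_\ast}$ of order $\delta^2/(|\beta_1|^2\log|\beta_1|)$; since Assumption~\ref{A:starting1} forces $\log|\beta_1|>1$ and hence $|\beta_1|>e$, this is tiny enough that $\cos\theta_{t_\ast}\geq 1/2$. Consequently $|\beta|\leq 2\delta\,|s_{t_\ast}|$.

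Finally I will compare $|s_{t_\ast}|$ and $|s_t|$ for an arbitrary $t\in I_2^h(\beta)$. Lemma~\ref{L:basic-interval} bounds $|t-t_0|$ by $c|s_{t_0}|/(32\,\mathrm{area}(\beta_1)|\beta|)$, which together with $|\beta|\geq|\beta_1|/(2\sqrt{2})$, $\mathrm{area}(\beta_1)\geq c$, and Remark~\ref{R:size-estimate} gives $|t-t_0|\lesssim \sqrt{2}\,t_0/16$; the analogous estimate for $|t_\ast-t_0|$ gives $|t_\ast-t_0|\lesssim t_0/(288(g-1))$ by virtue of Assumption~\ref{A:starting1}. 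Hence $t$, $t_0$, and $t_\ast$ are comparable up to a universal multiplicative constant, so $|s_t|$ and $|s_{t_\ast}|$ are comparable, and we arrive at $|\beta|\leq C\delta\,|s_t|$ for an absolute $C$. The bound $\delta<c/(576\sqrt{2}(g-1))$ is sharp enough to force $C\delta\leq c/32$, whence $|\beta|\cos\theta_t/|s_t|\leq |\beta|/|s_t|\leq c/32$, which is exactly the condition $t\in I_2^v(\beta)$.

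The main obstacle will be keeping the bookkeeping of constants honest: one has to propagate the lower bounds on $|\beta|$ and $|s_{t_\ast}|$, the near-equality $\cos\theta_{t_\ast}\approx 1$, and the comparability of $t,t_0,t_\ast$ carefully enough to verify that the final constant really sits under $c/32$. This is essentially a numerical check, but it has several moving parts and rests on all of Lemma~\ref{L:basic-interval}, Remark~\ref{R:size-estimate}, and the quantitative hypothesis $\delta<c/(576\sqrt{2}(g-1))$ of Assumption~\ref{A:starting1}.
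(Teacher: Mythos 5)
Your proof is correct and reaches the conclusion by essentially the same route as the paper: locate a witness time in $I_1(\beta)$ intersected with the protochild interval, deduce that $\cos\theta\approx 1$ there so that $|\beta|$ is of order $\delta\,|s_{t_0}|$, and then propagate this bound to every $t\in I_2^h(\beta)$ using that $t$, $t_0$, and the witness are all comparable. The paper packages the final step as a contradiction between boundary points $p$ of $I_1^v(\beta)$ and $q$ of $I_2^v(\beta)$ via the identity $|s_q|\,v_p(\beta)=\tfrac{32\delta}{c}\,|s_p|\,v_q(\beta)$ rather than bounding $|\beta|$ explicitly, but the underlying estimates are the same.
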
 
\begin{proof}
Let $v_t(\cdot)$ and $h_t(\cdot)$ denote respectively the vertical and horizontal parts of a holonomy vector when $(X, \omega)$ is rotated so that $s_t$ is vertical. 

If $I_1(\beta)$ intersects the interval of times in which protochildren are chosen, then so does $I_1^h(\beta)$. By Remark \ref{R:size-estimate}, it follows that 
\[ \frac{2 \log |\beta_1|}{\delta} <  t_0 + \frac{4 \delta (t_0+1)}{\sqrt{2} c} < \left( t_0 + 1 \right) \left( 1 + \frac{1}{144} \right). \]
where the second inequality follows from Assumption \ref{A:starting1}. Since $\log |\beta_1| > 1$ and $\delta < 1$ (see Assumption \ref{A:starting1}), it follows that $t_0 > 1$, so we can use the simpler bounds in Remark \ref{R:size-estimate}.  

Notice that $\lim_{t \ra \infty} \frac{v_t(\beta)}{|s_t|} = 0$. Therefore, it suffices to show that if $q$ is a point so that $\frac{v_q(\beta)}{|s_q|} = \frac{c}{32}$, then $q$ is less than the left endpoint of $I_2^h(\beta)$, which is at least $t_0 \left( 1 - \frac{1}{4 \sqrt{2}} \right)$ by Remark \ref{R:size-estimate}. Suppose to a contradiction that there is a point $q > t_0 \left( 1 - \frac{1}{4 \sqrt{2}} \right)$ such that $\frac{v_q(\beta)}{|s_q|} = \frac{c}{32}$.


Since $I_1(\beta)$ is nonempty, there is some point $p$ so that $\frac{v_p(\beta)}{|s_p|} = \delta$ and $p$ is smaller than the left endpoint of $I_1^v(\beta)$, i.e.
\[p \leq t_0 \left( 1 + \frac{8 \delta}{\sqrt{2} c} \right) < t_0 \left( 1 + \frac{1}{144} \right).\]
Notice that 
\[ \sin \theta\left( s_p, s_{t_0} \right) = \frac{|p-t_0| \mathrm{area}\left(\beta\right)}{|\beta_1||s_{t_0}|} < \frac{1}{144} \]
which implies that $p$ is close enough to $t_0$ that $v_p(\beta) \geq \frac{|\beta|}{2}$.

\noindent Since $\frac{v_q(\beta)}{|s_q|}  = \frac{c}{32}$ and $\frac{v_p(\beta)}{|s_p|} = \delta$, we have
\[ |s_q| v_p(\beta) = \frac{32\delta}{c} |s_p| v_q(\beta).  \]
This implies that
\[ \frac{q|\beta_1| |\beta|}{2} < \frac{64 \delta p |\beta_1| |\beta|}{c}. \] 
In other words, 
\[ q < \frac{128 \delta p}{c} < t_0 \left( \frac{1 + \frac{1}{144}}{2} \right) < t_0 \left( 1 - \frac{1}{4\sqrt{2}} \right) \]
which is a contradiction. Therefore, we have that every point in $I_2^h(\beta)$ is also contained in $I_2^v(\beta)$ as desired. 
\end{proof}

\begin{cor}\label{C:interval-estimate}
If $\frac{|\beta_1|}{2\sqrt{2}} \leq |\beta|$ and $I_1(\beta)$ intersects the interval $\left( \frac{2 \log |\beta_1|}{\delta}, 2^{m} \frac{2 \log |\beta_1|}{\delta} \right)$, then 
\[ I^h\left( \beta, \frac{c}{32\sqrt{2}} \right) \subseteq I_2(\beta) \subseteq I_2^h(\beta). \]
It follows that
\[ \frac{ |I_1(\beta)|}{|I_2(\beta)|} \leq \frac{32 \sqrt{2} \delta}{c} \]
\end{cor}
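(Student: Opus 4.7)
The rightmost inclusion $I_2(\beta)\subseteq I_2^h(\beta)$ is immediate from the definitions, since the horizontal component of any vector is bounded by its length. For the leftmost inclusion I would argue as follows. Recall from the proof of Lemma \ref{L:basic-interval} that if one rotates $(X,\omega)$ so that $s_t$ is vertical and writes $h_t,v_t$ for the horizontal and vertical components of the holonomy of $\beta$, then after applying $g_t$ to normalize $s_t$ to unit length, the circumference of $\beta$ on the resulting protochild surface equals $\sqrt{(h_t|s_t|)^2+(v_t/|s_t|)^2}$. For $t\in I^h(\beta,c/(32\sqrt{2}))$ we have $h_t|s_t|<c/(32\sqrt{2})$ by definition, so by Pythagoras it suffices to prove the matching vertical estimate $v_t/|s_t|\leq c/(32\sqrt{2})$ for the same range of $t$.

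The key step is therefore a quantitative refinement of Lemma \ref{L:interval2}: I would repeat its proof almost verbatim, with the target value $c/(32\sqrt{2})$ substituted everywhere for $c/32$. Running the same chain of inequalities, a point $q$ at which $v_q(\beta)/|s_q|$ attains the value $c/(32\sqrt{2})$ is bounded above by roughly $\tfrac{128\sqrt{2}\,\delta\,p}{c}$, where $p$ is the left endpoint of $I_1^v(\beta)$ as in that proof. Using $\delta<c/(576\sqrt{2}(g-1))$ from Assumption \ref{A:starting1} together with the bounds of Remark \ref{R:size-estimate}, this threshold is still comfortably less than the left endpoint of $I^h(\beta,c/(32\sqrt{2}))$: the half-width from Lemma \ref{L:basic-interval} is at most $1/8$, so the left endpoint is at least $t_0-1/8$, whereas $q$ is bounded by a small multiple of $t_0$. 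Since $v_t/|s_t|$ is monotonically decreasing throughout the protochild interval---it behaves essentially like $|\beta|/(t|\beta_1|)$, as $\cos\theta(s_t,s_{t_0})\to 1$ for $t$ near $t_0$ and $|s_t|$ grows linearly in $t$---the vertical bound then holds on all of $I^h(\beta,c/(32\sqrt{2}))$.

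Once the chain of inclusions is in hand, the ratio estimate follows by direct computation. Since total circumference dominates horizontal component we have $I_1(\beta)\subseteq I_1^h(\beta)$, and by Lemma \ref{L:basic-interval} both $I_1^h(\beta)$ and $I^h(\beta,c/(32\sqrt{2}))$ are intervals centered at $t_0$ with half-widths $\frac{\delta|s_{t_0}|}{\mathrm{area}(\beta_1)|\beta|}$ and $\frac{c|s_{t_0}|}{32\sqrt{2}\,\mathrm{area}(\beta_1)|\beta|}$, respectively. Combined with the previously established inclusion $I^h(\beta,c/(32\sqrt{2}))\subseteq I_2(\beta)$, taking the ratio of lengths gives
\[
\frac{|I_1(\beta)|}{|I_2(\beta)|}\leq\frac{|I_1^h(\beta)|}{|I^h(\beta,c/(32\sqrt{2}))|}=\frac{32\sqrt{2}\,\delta}{c},
\]
as claimed. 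The only nontrivial obstacle is the sharpened vertical estimate in paragraph two; the rest is bookkeeping with Lemma \ref{L:basic-interval} and Remark \ref{R:size-estimate}.
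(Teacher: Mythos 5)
Your proposal is correct and matches the paper's proof in essentially every respect: the paper likewise establishes the left inclusion by re-running Lemma~\ref{L:interval2} with $c$ replaced by $c/\sqrt{2}$ to show $I^h(\beta,c/(32\sqrt{2})) \subseteq I^v(\beta,c/(32\sqrt{2}))$, and then reads off the ratio from the explicit half-widths in Lemma~\ref{L:basic-interval}. One small slip: the half-width of $I^h(\beta,c/(32\sqrt{2}))$ from Remark~\ref{R:size-estimate} is at most $t_0/8$, not $1/8$, so its left endpoint is at least $\left(1-\tfrac{1}{8}\right)t_0$ rather than $t_0-\tfrac18$; this does not affect the conclusion since $q$ is bounded by roughly $0.23\,t_0$, and in place of your monotonicity claim the intermediate-value argument used in the paper is the cleaner way to close the gap.
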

\begin{proof}
Notice that $I_2(\beta)$ contains the set 
\[I^h\left( \beta, \frac{c}{32\sqrt{2}} \right) \cap I^v \left( \beta, \frac{c}{32\sqrt{2}} \right) \]
The proof of Lemma \ref{L:interval2} with $c$ replaced by $\frac{c}{\sqrt{2}}$ shows that this intersection is exactly $I^h\left( \beta, \frac{c}{32\sqrt{2}} \right)$, which establishes the first inclusion. The second inclusion is immediate from Lemma \ref{L:interval2}. 
\end{proof}

\begin{lemma}\label{L:bad-interval-estimate}
For any interval of the form $[t, 2t] \subseteq \left( \frac{2 \log |\beta_1|}{\delta}, 2^{m} \frac{2 \log |\beta_1|}{\delta} \right)$, the subset that is contained in some $I_1(\beta)$, for some cylinder $\beta$ satisfying $\frac{|\beta_1|}{2\sqrt{2}} \leq |\beta|$, has length at most $\frac{\delta (192\sqrt{2}g - 192\sqrt{2})}{c} t$
\end{lemma}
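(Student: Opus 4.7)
The strategy is a covering argument that couples Corollary~\ref{C:interval-estimate} with a multiplicity bound on the intervals $I_2(\beta)$. Let $\mathcal{B}$ denote the collection of cylinders $\beta$ on $(X,\omega)$ satisfying $|\beta|\geq |\beta_1|/(2\sqrt 2)$ and $I_1(\beta)\cap [t,2t]\neq\emptyset$, and let $E$ be the subset of $[t,2t]$ described in the lemma. Since $E\subseteq\bigcup_{\beta\in\mathcal{B}} I_1(\beta)$, countable subadditivity gives
\[
|E|\leq \sum_{\beta\in\mathcal{B}}|I_1(\beta)\cap[t,2t]|\leq \sum_{\beta\in\mathcal{B}}|I_1(\beta)|.
\]
For each $\beta\in\mathcal{B}$ the hypotheses of Corollary~\ref{C:interval-estimate} are satisfied, so $|I_1(\beta)|\leq \tfrac{32\sqrt{2}\,\delta}{c}|I_2(\beta)|$, and the problem is reduced to bounding $\sum_{\beta\in\mathcal{B}}|I_2(\beta)|$.

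By Remark~\ref{R:size-estimate}, for every $\beta\in\mathcal{B}$ the interval $I_2(\beta)$ has length a bounded multiple of $t_0(\beta)$, and the condition $I_1(\beta)\cap[t,2t]\neq\emptyset$ forces $t_0(\beta)$ to lie close to $[t,2t]$; hence all the intervals $I_2(\beta)$ are contained in an enlarged interval of length at most $2t$. This allows the sum $\sum_\beta |I_2(\beta)|$ to be computed as an integral via Fubini, provided one has a pointwise upper bound on how many of the $I_2(\beta)$ overlap. The key geometric claim to prove is therefore the following: at any $\tau$, the number of $\beta\in\mathcal{B}$ with $\tau\in I_2(\beta)$ is at most $3g-3$. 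For such a $\beta$, the cylinder has circumference at most $c/32$ and area at least $c$ on the protochild surface $(X_\tau,\omega_\tau)$, so its height is at least $32$, which is much larger than its circumference. The plan is to show that any two such \emph{tall and thin} cylinders on a unit-area genus-$g$ translation surface must have disjoint interiors: if their core curves were non-parallel and crossed, the transverse intersection of the two strips (each of transverse width at least $32$) would have total area exceeding $1$, contradicting unit area; if the cores are parallel, the cylinders are automatically disjoint as maximal cylinders in the same direction. Once disjointness of the interiors is established, the core curves form a pairwise-disjoint collection of non-isotopic simple closed curves on a closed genus-$g$ surface, of cardinality at most $3g-3$.

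Putting the pieces together, Fubini on the enlarged interval (length at most $2t$) with multiplicity at most $3(g-1)$ yields
\[
\sum_{\beta\in\mathcal{B}}|I_2(\beta)|\leq 6(g-1)\,t,
\]
and combining with the Corollary~\ref{C:interval-estimate} bound gives
\[
|E|\leq \frac{32\sqrt{2}\,\delta}{c}\cdot 6(g-1)\cdot t=\frac{(192\sqrt{2}g-192\sqrt{2})\,\delta}{c}\,t,
\]
as desired. The main obstacle is the geometric multiplicity estimate: verifying rigorously that two $(c/32,c)$-thin cylinders on a unit-area translation surface must have disjoint interiors and hence that their count is controlled by the topological bound $3g-3$. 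The remaining ingredients—subadditivity, Corollary~\ref{C:interval-estimate}, and Remark~\ref{R:size-estimate}—are straightforward to assemble.
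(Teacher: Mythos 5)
Your overall strategy coincides with the paper's: reduce $|E|$ to $\sum|I_1(\beta)|$ by subadditivity, control each $|I_1(\beta)|$ by $|I_2(\beta)|$ via Corollary~\ref{C:interval-estimate}, localize all the $I_2(\beta)$ in a window of length $2t$ via Remark~\ref{R:size-estimate}, and use a pointwise multiplicity bound of $3g-3$ together with a double-counting (``Fubini'') argument. That matches the paper's Steps~2 and~3 exactly, and your arithmetic at the end is correct.

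There is, however, a genuine flaw in the justification you sketch for the multiplicity bound (the step you yourself flag as the main obstacle). You argue that if two non-parallel cylinders of height at least $32$ crossed, ``the transverse intersection of the two strips would have total area exceeding $1$, contradicting unit area.'' This cannot be right: the intersection $C_1\cap C_2$ lives in the unit-area surface, so $\mathrm{area}(C_1\cap C_2)\leq\min\bigl(\mathrm{area}(C_1),\mathrm{area}(C_2)\bigr)\leq c<1$ no matter what. The parallelogram of area $h_1h_2/\sin\theta$ that you have in mind lives in the universal cover of one of the cylinders; it does not inject into the surface. Indeed, its extent along the core direction of $C_1$ is at least $h_2\geq 32$, whereas $C_1$ has circumference at most $c/32<1$, so the parallelogram wraps around $C_1$ hundreds of times and its image has small area. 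The correct argument, which is what the paper intends, is a length argument rather than an area argument: if $C_1\cap C_2\neq\emptyset$ with $C_1,C_2$ non-parallel, take the core curve $\gamma_1$ of $C_1$ through a point of the intersection. Since a straight closed geodesic in a flat cylinder can only be in the core direction, $\gamma_1$ cannot lie entirely inside $C_2$, so it must traverse $C_2$; any straight segment crossing $C_2$ has length at least $h_2\geq 32$, but $\gamma_1$ has total length at most $c/32<1$, a contradiction. With this fix the rest of your argument goes through and reproduces the paper's bound.
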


Note our choice of $\delta$ in Assumption \ref{A:starting1} says the above quantity is at most $\frac{t}{2}$.

\begin{proof}
We will proceed in three steps.

\noindent \textbf{Step 1: Any $3g-2$ intervals of the form $I_2(\beta)$ have empty intersection}

Such an intersection would contain $(3g-2)$ cylinders that have length at most $\frac{c}{32}$ and area at least $c$, hence height at least $32$. These cylinders cannot cross each other and hence such an intersection would contradict the fact that there are at most $3g-3$ disjoint simple closed curves on a surface of genus $g$. 

\noindent \textbf{Step 2: If $I_1(\beta)$ intersects $[t, 2t]$ then $I_2(\beta)$ is no longer than $\frac{t}{4}$}

If $I_1(\beta)$ intersects $[t, 2t]$ then the $t_0$ corresponding to $\beta$ must satisfy 
\[ t_0 - \frac{4\sqrt{2}\delta t_0}{c} \leq 2t \quad \text{that is} \quad t_0 \leq \frac{2t}{1 - \frac{4\sqrt{2}\delta}{c}} \]
This shows that $t_0 \leq 2\sqrt{2} t$ and so by the simplified estimates in Remark \ref{R:size-estimate}, the radius of $I_2(\beta)$ is at most $\frac{t}{2}$. 


\noindent \textbf{Step 3: The subset of $[t, 2t]$ that is contained in some $I_1(\beta)$ has length at most $\frac{\delta (192\sqrt{2}g - 192\sqrt{2})}{c} t$}

Let $J$ be the collection of $t_0$ from cylinders $\beta$ so that $I_1(\beta)$ intersects $[t, 2t]$. For any such $\beta$,  $I_2(\beta) \subseteq [\frac{t}{2}, \frac{5t}{2}]$ and each element in the interval $[\frac{t}{2}, \frac{5t}{2}]$ may lie in at most $3g-3$ intervals of the form $I_2(\beta)$. Therefore, 
\[ \sum_{t_0\in J} |I_1(\beta)| \leq \frac{32 \sqrt{2} \delta}{c} \sum_{t_0\in J} |I_2(\beta)| \leq \frac{(192\sqrt{2}g - 192\sqrt{2})\delta}{c} t\]
where the first inequality holds by Corollary \ref{C:interval-estimate}. 
\end{proof}

\begin{cor}\label{C:lots}
For any interval of the form $[t, 2t] \subseteq \left( \frac{2 \log |\beta_1|}{\delta}, 2^{m} \frac{2 \log |\beta_1|}{\delta} \right)$, there are at least $\left( 1-\frac{\delta (192\sqrt{2}g - 192\sqrt{2})}{c} \right) t -1$ points in $[t, 2t]$ that are not contained in any $I_1(\beta)$, for some cylinder $\beta$ satisfying $\frac{|\beta_1|}{2\sqrt{2}} \leq |\beta|$, and that are separated by at least unit distance. 
\end{cor}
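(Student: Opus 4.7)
The plan is to derive this directly from Lemma \ref{L:bad-interval-estimate} by an elementary pigeonhole/cumulative-measure argument. Set $\alpha := \frac{\delta (192\sqrt{2}g - 192\sqrt{2})}{c}$. Lemma \ref{L:bad-interval-estimate} says that the ``bad'' subset
\[ B := \bigcup_{\beta} I_1(\beta) \cap [t,2t], \]
where the union ranges over cylinders $\beta$ with $\frac{|\beta_1|}{2\sqrt{2}}\leq|\beta|$, has measure at most $\alpha t$. Hence the ``good'' set $G := [t,2t]\setminus B$ has measure at least $(1-\alpha)t$. Since each $I_1(\beta)$ is defined by a strict inequality, it is open, so $G$ is closed.

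The next step is to extract unit-separated points from $G$. Define the cumulative measure function $f(x) := |G \cap [t,x]|$. This function is continuous, nondecreasing, $1$-Lipschitz, satisfies $f(t)=0$, and $f(2t)\geq (1-\alpha)t$. Set $L := \lfloor (1-\alpha)t \rfloor$ and for each $k = 0,1,\ldots,L-1$, choose
\[ x_k := \inf\{x \in [t,2t] : f(x) > k\}. \]
Each $x_k$ lies in $G$ (using that $G$ is closed and $f$ is constant on each maximal interval of $B$), and consecutive choices satisfy $f(x_{k+1})-f(x_k) \geq 1$; the Lipschitz property then forces $x_{k+1}-x_k \geq 1$. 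This produces $L \geq (1-\alpha)t - 1$ points of $G$ separated by at least unit distance.

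I do not anticipate a genuine obstacle here; the only care needed is in verifying that the chosen points $x_k$ actually lie in $G$ rather than in its closure, which reduces to the openness of each $I_1(\beta)$. Once this is in hand, the corollary follows immediately.
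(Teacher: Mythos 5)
Your proof is correct and is essentially the same approach as the paper's: both derive the corollary directly from Lemma~\ref{L:bad-interval-estimate} and then extract unit-separated good points by an elementary counting argument. The paper does this with a greedy walk (pick the first good point, advance by one unit, pick the next good point, and account for the total bad measure $\sum a_i$, unit spacings, and leftover $\rho$), while you package the same idea via the cumulative good-measure function $f$ and its $1$-Lipschitz property; the two are interchangeable and give the same bound.
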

\begin{proof}
Let $\nu := \frac{\delta (192\sqrt{2}g - 192\sqrt{2})}{c} $. Let $p_1$ be the first point in $[t, 2t]$ not contained in some $I_1$ and set $a_1 = p_1 - t$. Let $p_2$ be the next point that lies beyond $p_1 + 1$ and set $a_2 = p_2 - (p_1 + 1)$. Iterate this procedure until $p_n$ lies within unit distance of $2t$. Let the leftover distance at the end be $\rho = 2t - p_n$. By Lemma \ref{L:bad-interval-estimate}, $\sum_{i=1}^n a_i < \nu t$. Since 
\[ n + \sum_{i=1}^n a_i + \rho = t \]
we must have
\[ n \geq t - 1 - \nu t  = (1-\nu)t - 1\]
\end{proof}

%
%

\section{Getting started} \label{S:started}

We will continue to make Assumptions \ref{A:basic1} and \ref{A:starting1} in this section. Make the following definitions. 

\begin{defn}
Let $\mathrm{Sys}(X, \omega)$ be the length of the shortest saddle connection on $(X, \omega)$. Set $T_1:= 2g+|\Sigma|-2$ and $T_0 = 2^{\left( 2^{4T_1} \right)}$. Let $\Theta(R) \subseteq \left[ \frac{-\pi}{2}, \frac{\pi}{2} \right]$ be the collection of the arguments of holonomy vectors of core curves of cylinders whose circumference is at most $R$ and whose area is at least $\frac{1}{T_1}$.
\end{defn}

We will use the following theorem about the distribution of cylinders on translation surfaces, which is based on work of Chaika \cite{Chaika11} and Vorobets \cite{Vo05}.

\begin{thm}(Marchese-Trevi\~no-Weil \cite[Theorem 1.9(4)]{MTW18})\label{T:MTW}
Fix $K \geq \frac{\sqrt{2} T_0^2}{\mathrm{Sys}(X, \omega)}$ and a positive integer $n \geq 1$. For any interval $I \subseteq \left[ \frac{-\pi}{2}, \frac{\pi}{2} \right]$ such that
\[ |I| \geq \frac{1}{2T_1\mathrm{Sys}(X, \omega) K^{n-1}} \]
at least half of the points in $I$ are within $\frac{\sqrt{3}K}{K^{2n}}$ of an element of $\Theta(R)$. 
\end{thm}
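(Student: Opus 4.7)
The plan is induction on $n$. The base case comes from a quantitative density statement for cylinder directions, while the inductive step is carried out by an expansion argument using the Teichm\"uller flow.

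For the base case $n=1$, Vorobets' theorem \cite{Vorobets03} gives that on any translation surface, in every direction there is a cylinder of area at least $\frac{1}{T_1}$ whose circumference is controlled in terms of $\mathrm{Sys}(X, \omega)$. Combining this with a companion counting result for cylinders of bounded circumference (along the lines of Chaika \cite{Chaika11}) shows that $\Theta(R)$ is $O(R^{-2})$-dense in $\bigl[-\frac{\pi}{2}, \frac{\pi}{2}\bigr]$, with implied constants depending on $\mathrm{Sys}(X, \omega)$. The hypothesis $K \geq \sqrt{2} T_0^2/\mathrm{Sys}(X,\omega)$ together with $|I| \geq \frac{1}{2T_1 \mathrm{Sys}(X,\omega)}$ then guarantees that more than half of $I$ lies within $\frac{\sqrt{3}}{K}$ of an element of $\Theta(R)$, with the $\sqrt{3}$ absorbing trigonometric losses from identifying a cylinder direction with its nearest grid point.

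For the inductive step, suppose the result holds at level $n-1$ for every translation surface in $\strata$ with systole comparable to $\mathrm{Sys}(X,\omega)$. Given $I$ satisfying the level-$n$ hypothesis, pick a midpoint $\theta_0 \in I$ and pass to $(X', \omega') := g_t r_{\theta_0}(X, \omega)$ for $t$ comparable to $\log K$. The flow $g_t$ expands angular intervals near $\theta_0$ by roughly $K^2$, so $I$ is sent to an interval $I'$ on $(X', \omega')$ satisfying the length hypothesis at level $n-1$. The inductive hypothesis then yields cylinders on $(X', \omega')$ whose directions cover at least half of $I'$ to within $\frac{\sqrt{3}K}{K^{2(n-1)}}$. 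Pulling back by $r_{-\theta_0} g_{-t}$ produces cylinders on $(X, \omega)$ with holonomy directions inside $I$, circumference enlarged by at most a factor $e^t \approx K$ (which sets the admissible $R$), and angular scale contracted by $K^{-2}$ to the required $\frac{\sqrt{3}K}{K^{2n}}$.

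The hard part is controlling the systole of $(X', \omega')$ so that the inductive hypothesis actually applies. The flow $g_t r_{\theta_0}$ can in principle push the surface deeply into the thin part of moduli space, invalidating the level-$(n-1)$ lower bound on $K$ in terms of the new systole. The role of the doubly-exponential constant $T_0 = 2^{(2^{4T_1})}$ and the hypothesis $K \geq \sqrt{2}T_0^2/\mathrm{Sys}(X,\omega)$ is precisely to absorb this loss: the set of $\theta_0$ for which the new systole collapses has measure only a small controlled fraction of $|I|$, since any short saddle connection on $(X', \omega')$ pulls back to a saddle connection or core curve on $(X, \omega)$ of controlled circumference whose direction is already very close to $\theta_0$. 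Such $\theta_0$ therefore either produce their own witness in $\Theta(R)$ or are safely discarded; the fraction discarded at each stage is bounded by a geometric series whose sum is absorbed into the factor $\frac{1}{2}$ in the conclusion, reflecting the bound $T_1$ on the number of disjoint cylinders of definite area on any surface in $\strata$.
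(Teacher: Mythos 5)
The paper does not prove Theorem~\ref{T:MTW}; it is quoted directly as Theorem 1.9(4) of Marchese--Trevi\~no--Weil~\cite{MTW18} and used as a black box (the only ``proof'' in this paper is the derivation of Corollary~\ref{C:MTW} from it). There is therefore no internal argument to check your reconstruction against, and you should not spend your effort proving a cited result of another paper rather than verifying its hypotheses match the way it is invoked here. That said, your outline---induction on $n$, with a base case coming from a Vorobets existence theorem paired with a Chaika-style quantitative density estimate for cylinder directions, and an inductive step that renormalizes by $g_t r_{\theta_0}$ for $t \approx \log K$ so that angular scales expand by roughly $K^2$, with the doubly-exponential constant $T_0$ controlling systole degradation along the flow---is a plausible sketch of the kind of argument that appears in~\cite{MTW18}; verifying it would require that paper, not this one.

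One concrete point you should have caught from the way the theorem is used: as printed, the statement leaves $R$ unbound. From the proof of Corollary~\ref{C:MTW}, where the surface is approximated to within $\tfrac{\sqrt{3}\lambda^2}{r^2}$ by an element of $\Theta(r)$ with $r = K^n$, the correct reading is $R = K^n$. This in fact confirms the scaling in your inductive step (circumference enlarged by a factor $\approx K$ per level, angular resolution contracted by $\approx K^{-2}$), but you should have made the dependence $R = K^n$ explicit rather than leaving $R$ floating, since without it the statement you are proving is not well-posed.
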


We will only use the following immediate consequence. 

\begin{cor}\label{C:MTW}
There is an $R_0'$ so that for any $R > R_0'$ there are constants $d_1,d_2,d_3$ independent of $R$ such that there are  $d_1R^2$ cylinders of circumference at most $R$, area bounded below by $d_2$, and whose angles are at least $\frac{d_3}{R^2}$ apart.
\end{cor}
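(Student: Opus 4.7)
The plan is to apply Theorem~\ref{T:MTW} to the interval $I = \left[-\frac{\pi}{2}, \frac{\pi}{2}\right]$, whose length $\pi$ trivially satisfies the length hypothesis of that theorem for all sufficiently large $n$. First I would fix once and for all a constant $K \geq \frac{\sqrt{2}T_0^2}{\mathrm{Sys}(X,\omega)}$. Given $R$, I would then choose the positive integer $n$ so that $K^n \leq R < K^{n+1}$, and take $R_0'$ large enough that this $n$ is at least $1$ and that $\pi \geq \frac{1}{2 T_1 \mathrm{Sys}(X,\omega) K^{n-1}}$.

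Applying Theorem~\ref{T:MTW} with the chosen $K$, $n$, $I$, and with the circumference cutoff $K^n \leq R$, one concludes that at least half of $I$ (in Lebesgue measure) lies within $\varepsilon := \frac{\sqrt{3}K}{K^{2n}}$ of some element of $\Theta(K^n) \subseteq \Theta(R)$. Since $K^n > R/K$, we have $\varepsilon \leq \frac{\sqrt{3}K^3}{R^2}$, so the error in the MTW estimate is already of the correct order $R^{-2}$.

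Next I would extract a separated subset by a maximal packing argument. Fix any $d_3 > 0$ and let $\Xi \subseteq \Theta(K^n) \cap I$ be a maximal subset that is $\frac{d_3}{R^2}$-separated. By maximality, every element of $\Theta(K^n) \cap I$ is within $\frac{d_3}{R^2}$ of some element of $\Xi$, so every point in the MTW-covered set is within $\varepsilon + \frac{d_3}{R^2}$ of $\Xi$. Since that set has measure at least $\pi/2$, a union-of-intervals volume count gives
\[ |\Xi| \cdot 2\left(\varepsilon + \tfrac{d_3}{R^2}\right) \geq \tfrac{\pi}{2}, \qquad \text{hence} \qquad |\Xi| \geq \frac{\pi R^2}{4\left(\sqrt{3}K^3 + d_3\right)}. \]
Setting $d_1 := \frac{\pi}{4(\sqrt{3}K^3 + d_3)}$, which is independent of $R$, yields $|\Xi| \geq d_1 R^2$. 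Each element of $\Xi$ is by definition the argument of the core curve of a cylinder of circumference at most $K^n \leq R$ and area at least $d_2 := \frac{1}{T_1}$, and by construction these directions are $\frac{d_3}{R^2}$-separated, completing the corollary.

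The only nontrivial step is the packing argument in the third paragraph: turning the MTW estimate, which is about the Lebesgue measure of the $\varepsilon$-neighborhood of $\Theta(R)$, into a lower bound on the cardinality of a maximally $\frac{d_3}{R^2}$-separated subset. The rest is bookkeeping to match up the scales $R$, $K^n$, and $1/R^2$.
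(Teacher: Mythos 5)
Your proof is correct and follows essentially the same route as the paper's: apply Theorem~\ref{T:MTW} on the full interval $\left[-\frac{\pi}{2},\frac{\pi}{2}\right]$ at scale $K^n\approx R$, then convert the measure estimate into a cardinality bound on a $\frac{d_3}{R^2}$-separated subset of $\Theta(K^n)$ by a maximal-packing count. The only cosmetic difference is bookkeeping: the paper writes $r=\ell^n$ exactly with $\ell\in[\lambda,\lambda^2]$ and explicitly partitions the circle into bins of radius $\frac{1}{r^2}$, whereas you fix $K$ once and take $K^n\leq R<K^{n+1}$ and invoke a maximal separated set directly, which amounts to the same thing.
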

\begin{proof}
Set $\lambda := \frac{\sqrt{2} T_0^2}{\mathrm{Sys}(X, \omega)}$. For any $r > \lambda$ there is some $\ell \in [\lambda, \lambda^2]$ so that $r = \ell^n$ for some positive integer $n$. Take the interval $I$ to be $\left[ \frac{-\pi}{2}, \frac{\pi}{2} \right]$. Theorem \ref{T:MTW} states that half of all points in $I$ are within $\frac{\sqrt{3}\lambda^2}{r^2}$ of an element of $\Theta(r)$ when $r > \lambda$. 

Fix $r > \lambda$ and divide the circle into intervals of equal size that are as close as possible to radius $\frac{1}{r^2}$. There will be at least $\frac{\pi}{2}r^2 - 1 > \frac{r^2}{2}$ intervals. Let $N$ be the least integer greater than or equal to $\sqrt{3} \lambda^2$. If an element of $\Theta(r)$ is contained in one of the intervals, then the ball of radius $\sqrt{3} \lambda^2$ about it is contained in $2N+1$ intervals. The ball of radius $\sqrt{3} \lambda^2$ about any point in those $2N+1$ intervals is contained in $4N+1$ intervals. 

Let $S$ be a maximal collection of points in $\Theta(r)$ that are all pairwise distance $\frac{4N+2}{r^2}$ apart. Theorem \ref{T:MTW} implies that $|S|(4N+1) \geq \frac{r^2}{4}$. Therefore, we have found $\frac{r^2}{16N+4}$ cylinders of circumference less than $r$, area at least $\frac{1}{T_1}$, and which are separated by a distance of $\frac{4N+2}{r^2}$.  
\end{proof}

\begin{rem}
We see that all constants are explicit, that is, we may take $R_0' =  \frac{\sqrt{2} T_0^2}{\mathrm{Sys}(X, \omega)}$, $d_1 = \frac{1}{16\sqrt{3}(R_0')^2 + 20}$, $d_2 = \frac{1}{2g+|\Sigma|-2}$, and $d_3 = 4\sqrt{3}(R_0')^2 + 6$.
\end{rem}

We will also use the quadratic asymptotics of cylinders,

\begin{thm}(Masur \cite[Theorem 1]{Ma-complex}) \label{T:Masur}
There is also a constant $d_4$ such that for any $R$ there are at most $d_4R^2$ cylinders of circumference at most $R$.
\end{thm}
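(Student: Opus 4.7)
My plan is to reduce the counting of cylinders of circumference at most $R$ to the counting of core-curve holonomy vectors in the disk $D_R=\{v\in\bR^2:|v|\leq R\}$, and then to bound the density of these holonomy vectors by a dyadic rescaling argument. Each cylinder $\beta$ is uniquely determined by its core curve, whose holonomy vector $v_\beta$ has length equal to the circumference of $\beta$, so the counting problem is equivalent to bounding $\#\{v_\beta\in D_R\}$.

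I would partition $D_R$ into dyadic annuli $A_k=\{v:2^{k-1}\leq |v|<2^k\}$ for $k=1,\dots,\lceil\log_2 R\rceil+1$ and aim to show that the number of cylinders with $v_\beta\in A_k$ is $O(4^k)$; summing a geometric series in $4$ then yields the $O(R^2)$ bound. For each annulus I would apply a Teichm\"uller rescaling $g_{-(k-1)\log 2}$ (after an appropriate rotation) so that $A_k$ is carried to a fixed unit-scale annulus, reducing the count in $A_k$ on $(X,\omega)$ to the count of cylinders of bounded circumference on the rescaled surface $g_{-(k-1)\log 2}(X,\omega)$. It then suffices to uniformly bound the number of cylinders of bounded circumference on an arbitrary translation surface in the stratum. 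In any fixed direction there are at most $3g-3+|\Sigma|$ parallel cylinders, since their core curves form part of a multi-curve; to control the number of distinct directions, one observes that two non-parallel cylinders of bounded circumference intersect in parallelograms whose total area is bounded below in terms of the circumferences and the sine of the angle between them, so that the unit area constraint forces the cylinder directions to be angularly well separated.

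The main obstacle is handling the situation in which the rescaled surface drifts to the boundary of the stratum and develops arbitrarily thin cylinders, for which the naive intersection-area argument degenerates. The classical resolution, due to Masur in \cite{Ma-complex}, is to iterate the $SL(2,\bR)$ rescaling and exploit recurrence of the Teichm\"uller flow to a fixed compact set; modern treatments equivalently invoke the finiteness of the Siegel-Veech constant of Eskin-Masur together with a pointwise recurrence input, yielding a constant $d_4$ that is allowed to depend on the starting surface $(X,\omega)$.
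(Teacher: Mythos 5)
The paper does not prove this statement; it simply cites \cite[Theorem 1]{Ma-complex} as a known external result, so there is no ``paper's own proof'' for your argument to parallel. Since you nevertheless offered a sketch, here is one concrete issue. The rescaling step as stated does not do what you want: $g_{-(k-1)\log 2}=\mathrm{diag}(2^{-(k-1)},2^{k-1})$ contracts horizontal holonomy by $2^{-(k-1)}$ while \emph{expanding} vertical holonomy by the same factor, and a single post-composed rotation cannot bring all vectors of a whole annulus $A_k$ (which point in all directions) down to unit scale simultaneously. What you actually need is either scalar multiplication by $2^{-(k-1)}$, which destroys the unit-area normalization and with it any uniform bound, or a directional version of the argument: for each $\theta$, apply $g_{(k-1)\log 2}\,r_\theta$ so that vectors of length about $2^{k-1}$ within an angular band of width about $4^{-(k-1)}$ of $\theta$ become unit-scale, cover the circle by $\sim 4^{k-1}$ such bands, and bound the unit-scale count on each rescaled surface. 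In that reformulation your sum over $k$ of $4^{k}$ indeed yields $O(R^2)$, and the remaining difficulty is exactly the one you flag, namely that the rescaled surfaces need not stay in a fixed compact set, which is where Masur's recurrence argument (or the Eskin--Masur Siegel--Veech machinery) is genuinely needed. So the sketch is salvageable after fixing the rescaling direction and making it directional rather than annulus-by-annulus, but as written that step is not correct, and in any case the citation to \cite{Ma-complex} is the paper's entire ``proof'' and is the appropriate level of detail for this result.
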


Now make the following assumption.

\begin{ass} \label{A:starting-c} 
Set $D:= \max\left( 2, \sqrt{\frac{2d_4}{d_1}} \right)$. Suppose that $c < d_2$ and that $\delta < \frac{c}{512 D^4}$. 
\end{ass}

\begin{prop}\label{P:getting-started}
For $R$ sufficiently large, there is a cylinder $\beta$ of circumference at least $R$, area at least $c$, and that contains a protochild whose protochild surface is $(\delta, c)$-thick. 
\end{prop}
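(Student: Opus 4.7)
The plan is to produce a large collection of candidate parent cylinders $\beta$ on $(X,\omega)$ of controlled length, and then to show by a counting argument that at least one of them has a protochild time whose protochild surface is $(\delta,c)$-thick. First, combining Corollary~\ref{C:MTW} at scale $R$ (available because $c<d_2$ in Assumption~\ref{A:starting-c}) with Theorem~\ref{T:Masur} at scale $R/D$, I obtain for $R$ large a collection $\mathcal{B}_R$ of at least $d_1 R^2/2$ cylinders on $(X,\omega)$ whose circumferences lie in $(R/D,R]$, whose areas are at least $c$, and whose holonomy directions are pairwise separated by at least $d_3/R^2$; the choice $D\geq\sqrt{2d_4/d_1}$ is exactly what makes the Corollary~\ref{C:MTW} count at scale $R$ strictly dominate the Theorem~\ref{T:Masur} count at scale $R/D$. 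For $R$ large each $\beta\in\mathcal{B}_R$ satisfies $\log|\beta|>1$, so Assumption~\ref{A:starting1} applies and the protochild-time range $(2\log|\beta|/\delta,\,2^m\cdot 2\log|\beta|/\delta)$ contains a non-degenerate dyadic subinterval $[T,2T]$.

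For each $\beta\in\mathcal{B}_R$, Corollary~\ref{C:lots} (using the bound $\delta<c/(576\sqrt{2}(g-1))$ of Assumption~\ref{A:starting1}) yields at least $T/2$ integer \emph{long-good} times in $[T,2T]$, meaning times avoiding every bad interval $I_1(\beta')$ coming from a cylinder $\beta'$ with $|\beta'|\geq|\beta|/(2\sqrt{2})$. It remains to arrange that, for some $\beta$, some long-good $t$ is also not bad for any \emph{short} cylinder $\beta'$ (one with $|\beta'|<|\beta|/(2\sqrt{2})$).

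The essential geometric input is the identity $s_t\times\beta=s\times\beta$, which gives $\sin\angle(s_t,\beta)=\mathrm{area}(\beta)/(|s_t||\beta|)=O(D^2/(TR^2))$ for $t\in[T,2T]$. Hence $s_t$ can be parallel to a short cylinder $\beta'$ only if the direction of $\beta$ lies in an arc of angular width $O(D^2/(TR^2))$ around $\beta'$'s direction, and so the $d_3/R^2$-separation of the directions in $\mathcal{B}_R$ leaves only $O(D^2/(T d_3))$ members of $\mathcal{B}_R$ affected by any given short $\beta'$. Combining this angular localization with Lemma~\ref{L:basic-interval}, which gives $|I_1(\beta')\cap[T,2T]|\lesssim\delta T R/(c|\beta'|)$, and with Masur's quadratic count of short cylinders organized dyadically by length (a geometric series in $|\beta'|$ that terminates at $|\beta|/(2\sqrt{2})$), I would bound the total measure of pairs $(\beta,t)\in\mathcal{B}_R\times[T,2T]$ rendered bad by some short $\beta'$ by $O(\delta R^2/c)$, independently of $T$. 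Since the supply of long-good pairs is at least $|\mathcal{B}_R|\cdot T/2\gtrsim R^2\log R/\delta$, this dwarfs the short-cylinder bad measure once $R$ is sufficiently large, and pigeonhole furnishes a $\beta\in\mathcal{B}_R$ together with an integer protochild time whose protochild surface is $(\delta,c)$-thick.

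The main obstacle is the large radius of $I_1(\beta')$ when $|\beta'|$ is close to $\mathrm{Sys}(X,\omega)$; the angular-localization argument above is what prevents each such short $\beta'$ from contaminating more than $O(1)$ cylinders in $\mathcal{B}_R$, and Assumption~\ref{A:starting-c}'s bound $\delta<c/(512 D^4)$ is what absorbs the remaining constants in the final bookkeeping.
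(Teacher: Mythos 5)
Your high-level plan matches the paper's: use Corollary~\ref{C:MTW} against Theorem~\ref{T:Masur} to produce many well-separated candidate parents of roughly fixed circumference, dispose of long cylinders via Corollary~\ref{C:lots}/Lemma~\ref{L:bad-interval-estimate}, and then bound the total bad measure coming from \emph{short} cylinders and compare it to the total protochild measure. (Your choice of circumferences in $(R/D,R]$ with Masur at scale $R/D$ is the same trick as the paper's choice of $[R,DR]$ with Masur at scale $R$.) However, two of your intermediate quantitative claims are incorrect, and the gap is substantive.

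First, your angular-localization claim --- that each short $\beta'$ affects only $O(D^2/(Td_3))$ parents in $\mathcal{B}_R$ --- does not hold. A cylinder $\beta'$ causes $(\delta,c)$-thinness on the protochild surface of $s_t$ not only when $s_t$ is parallel to $\beta'$ but whenever $\sin\theta(\beta',s_t)\leq \delta/(|\beta'|\,|s_t|)$. Since $|s_t|\asymp TR$, the bad arc around $\beta'$ has angular width $\asymp \delta/(|\beta'|TR)$, and for $|\beta'|\ll \delta R$ this is \emph{wider} than the protochild arc $\asymp D^2/(TR^2)$ that you used to localize. When $|\beta'|$ is comparable to $\mathrm{Sys}(X,\omega)$, the bad arc can be so wide that the number of affected members of $\mathcal B_R$ grows like $\delta R/(Td_3|\beta'|)$, which is much larger than $O(D^2/(Td_3))$ once $R$ is large. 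You cannot restrict attention to parents whose direction is within $O(D^2/(TR^2))$ of $\beta'$'s direction; the correct localization window depends on $|\beta'|$.

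Second, your claimed bound $O(\delta R^2/c)$ for the measure of bad $(\beta,t)$-pairs, ``independently of $T$,'' is off by a factor of $T$. Per affected parent, the bad $t$-interval has length $\asymp \delta T R/(c|\beta'|)$, which is proportional to $T$ (and must be capped at $T$ for very short $\beta'$); after summing over dyadic scales the total bad measure is $O(\delta T R^2/c)$, not $O(\delta R^2/c)$. Since the good supply is $\asymp R^2 T$, the ratio does \emph{not} tend to $0$ as $R\to\infty$; it is a constant of order $\delta D/c$, and what saves the argument is precisely Assumption~\ref{A:starting-c} forcing $\delta < c/(512 D^4)$. The paper's Step~3 sidesteps both of these issues by working directly in angular measure: because the protochild arcs of distinct parents are disjoint, one simply sums the angular width $\asymp 2\delta^2 D^j/(R^2\log R)$ of the bad arc over all $\asymp d_4 R^2/D^{2j-2}$ cylinders at dyadic scale $j$ and compares to the total angular measure $\asymp \delta c d_4/(D^2\log(DR))$ of the protochild arcs. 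This accounting avoids the need to count affected parents per short cylinder and exhibits cleanly where the $128 D^4\delta/c$ bound comes from. Your proof as written would not survive the bookkeeping; the angular-measure version fixes it.
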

\begin{proof}
Fix $R > \max\left( R_0',  \mathrm{exp}\left( \frac{4}{d_3}\right), D, \frac{1}{\mathrm{Sys}(X, \omega)} \right)$. By Corollary \ref{C:MTW}, let $\mathrm{Cyl}$ be a collection of $d_1 (DR)^2 \geq 2d_4 R^2$ cylinders, of circumference at most $DR$, area bounded below by $d_2$, and whose angles are at least $\frac{d_3}{(DR)^2}$ apart. By Theorem~\ref{T:Masur}, there are at most $d_4 R^2$ cylinders of circumference less than $R$. Let $\cC$ be the subcollection of at least $d_4 R^2$ cylinders in $\mathrm{Cyl}$ whose circumference is in $[R, DR]$. 

\noindent \textbf{Step 1: Two distinct cylinders in $\cC$ have disjoint sets of protochildren and the sets of protochildren have length at least $\frac{\delta c}{16 (DR)^2 \log(DR)}$.}

Given a cylinder $\beta \in \cC$, the collection of protochildren has the form $[N, 2^m N]$ where $N = \frac{2 \log |\beta|}{\delta}$. Therefore, 
\[ \sin \theta\left( s_N, s_{2^m N} \right) = \frac{\mathrm{area}(\beta) \left| 2^m N - N \right|}{|s_N||s_{2^m N}|} \]
Using that $|s_N| \leq 2N |\beta|$ and a similar estimate for $|s_{2^mN}|$, it follows that
\[ \frac{\delta c}{16 |\beta|^2 \log |\beta|} \leq \sin \theta\left( s_N, s_{2^m N} \right) \leq \theta\left( s_N, s_{2^m N} \right) \]
Since $|\beta|$ is large it follows that $\sin \theta \left( s_N, \beta \right)$ is small and so, 
\[ \theta \left( s_N, \beta \right) \leq 2 \sin \theta \left( s_N, \beta \right) \leq \frac{2}{|\beta|^2 N} = \frac{\delta}{|\beta|^2 \log |\beta|} \] 
Since $s_N$ is the furthest point from $\beta$, we have that the largest distance from $\beta$ to an element of its protochild set is  $\frac{\delta}{|\beta|^2 \log |\beta|} \leq \frac{\delta}{R^2 \log R}$. If $\beta$ and $\beta'$ are two distinct cylinders in $\cC$ then they are separated by a distance of at least $\frac{d_3}{(DR)^2}$. Therefore, the distance between the two sets of protochildren of $\beta$ and $\beta'$ is at least
\[ \frac{d_3}{(DR)^2} - \frac{2\delta}{R^2 \log R}.  \]
Using the estimate that $\delta < \frac{1}{512 D^4}$ and $\log R > \frac{4}{d_3}$, we see that the distance between the two sets of protochildren is at least $\frac{d_3}{2(DR)^2}$. Hence, the sets of protochildren are disjoint.

Let $\cI$ be the collection of all protochildren of cylinders in $\cC$. Partition the interval $[1, DR]$ into subintervals $I_j := [\frac{R}{D^j},\frac{R}{D^{j-1}}]$ with $j \geq 0$. We analyze the following cases.
 
 \noindent \textbf{Step 2: Cylinders of circumference greater than $DR$ can only cause $(\delta, c)$-thinness for half of all protochildren in $\cI$.}
 
 This step follows immediately from Assumption \ref{A:starting1} and Lemma \ref{L:bad-interval-estimate}.



 \noindent \textbf{Step 3: Cylinders of circumference less than $DR$ cause $(\delta, c)$-thinness for at most a quarter of all protochildren in $\cI$.}

 Suppose that $\beta'$ is a cylinder whose circumference belongs to $I_j$ and which is thin for the protochild surface of $s_t$ of $\beta$. Then 
\[ |\beta'| \sin \theta\left( \beta', s_t \right) |s_t| \leq \delta \]
In other words,
\[ \sin \theta\left( \beta', s_t \right)  \leq \frac{\delta}{|s_t||\beta'|} \leq \frac{\delta^2 D^j}{2R^2 \log R} \]
Notice that we may assume that $D^j < R^2$ because $\frac{1}{R} < \mathrm{Sys}(X, \omega)$. This implies that $\sin \theta\left( \beta', s_t \right) < \frac{\delta^2}{2 \log R}$, i.e. the sine of the angle is so small that we can use the estimate $\frac{\theta}{2} < \sin \theta$. Therefore, the length of the collection of angles for which $\beta'$ is $(\delta, c)$-thin on the corresponding protochild surface is at most 
\[ \frac{2 \delta^2 D^j}{R^2 \log R} \]

By Theorem \ref{T:Masur}, there are at most $\frac{d_4 R^2}{D^{2j-2}}$ cylinders with circumference in $I_j$. Since these cylinders are $(\delta, c)$-thin on protochild surfaces corresponding to an interval of angles of length at most $\frac{2 \delta^2 D^j}{R^2 \log R}$, the total length of the collection of angles for which a cylinder with circumference in $I_j$ is $(\delta, c)$-thin is at most
\[  \left( \frac{d_4 R^2}{D^{2j-2}} \right) \left( \frac{2 \delta^2 D^j}{R^2 \log R} \right) = \left( \frac{2 d_4 D^2 \delta^2}{\log R} \right) \frac{1}{D^j} \]

However, there are at least $d_4 R^2$ cylinders in $\cC$ and each has a collection of protochildren that has length at least $\frac{\delta c}{16 (DR)^2 \log(DR)}$. Therefore, the length of the collection of protochildren associated to cylinders in $\cC$ is at least
\[ \left( d_4 R^2 \right) \left( \frac{\delta c}{16 (DR)^2 \log(DR)}\right) = \left( \frac{\delta c d_4}{16 D^2 \log(DR) }\right)  \]
Therefore, the largest proportion of $\cI$ whose protochild surface contains a $(\delta, c)$-thin cylinder that has  circumference smaller than $DR$ on $(X, \omega)$ is 
\[ \frac{ \left( \frac{2 d_4 D^2 \delta^2}{\log R} \right) }{\left( \frac{\delta c d_4}{16 D^2 \log(DR) }\right)} \sum_{j=0}^\infty \frac{1}{D^j} < \left( \frac{32 \delta D^4}{c} \right) \left( \frac{\log(DR)}{\log(R)} \right) \left( \frac{D}{D-1} \right)  \]
Since $2 \leq D \leq R$, this ratio is bounded above by $\left( \frac{128 D^4 \delta}{c} \right)$, which is at most $\frac{1}{4}$ by Assumption \ref{A:starting-c}. Combining these steps we conclude that a fourth of all protochildren have protochild surfaces that are $(\delta, c)$-thick.

\end{proof}

\section{Elementary Facts about the child selection process}\label{S:facts}

Throughout this section Assumption \ref{A:basic1} will continue to hold and we will make the following assumption.

\begin{ass}\label{A1}
Suppose that $\cot \theta_1 < \frac{c}{16}$. Let $\beta_0$, $\beta_1$, and $s_t$ be defined as in definition of the child selection process (Definition \ref{D:CS}). Let $C:= \frac{Lc}{16M}$. Suppose that $|\beta_0| > \max \left( e^{4/C}, 21, e^{2\delta}, e^M \right)$. This implies that all estimates in this section (including the ones contingent on the size of $\beta_0$) hold.
\end{ass} 

\begin{lemma}[Length Facts]\label{L:length-facts}
The following facts hold for the child selection process,
\begin{enumerate}
\item \label{F:L1} If $|\beta_0|>1$, then it is immediate that $$ t |\beta_0| \leq |s_t| \leq (t+1) |\beta_0|.$$ Since $t \in \left(\frac{2 \log|\beta_0|}{\delta},  \frac{M \log|\beta_0|}{2L} \right)$, it follows that $$\frac{2}{\delta} |\beta_0| \log|\beta_0| \leq |s_t| \leq \frac{M}{L} |\beta_0| \log|\beta_0|.$$
If $e^{\delta/2} < |\beta_0|$, then $|\beta_0| < |s_t|$.  
\item \label{F:L2}  Since the length of $\beta_1$ on $(X_1, \omega_1)$ is between $\delta$ and $L$ and since it makes an angle of at least $\frac{\pi}{4}$ with the horizontal, when $\beta_1$ is pulled back to $(X_0, \omega_0)$ by $g_{-|s_t|}$ we have $$|s_t| \frac{\delta}{2} \leq |\beta_1| \leq |s_t|L$$ Combined with the previous estimate this yields $$|\beta_0| \log|\beta_0| \leq |\beta_1| \leq M |\beta_0| \log |\beta_0|$$
\item \label{F:L3} If $|\beta_0| > e^M$, 
\[ \frac{|\beta_0| \log |\beta_0|}{ |\beta_1| \log |\beta_1|} \geq \frac{1}{6Lt} \]
\end{enumerate}
\end{lemma}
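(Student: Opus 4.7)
The three statements are all elementary computations once the right picture is set up. I would treat them in order, keeping track of what each conclusion requires of the "sufficiently large" bounds in Assumption \ref{A1}.

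For \eqref{F:L1}, I would view everything at the level of holonomy vectors in $(X_0,\omega_0)$. Writing $s_t = s + t\beta_0$ and expanding
\[
|s_t|^2 = |s|^2 + 2t\langle s,\beta_0\rangle + t^2|\beta_0|^2,
\]
the lower bound $|s_t|\ge t|\beta_0|$ is immediate from the acute-angle condition, which guarantees $\langle s,\beta_0\rangle>0$. For the upper bound, decompose $s$ into components parallel and perpendicular to $\beta_0$: the parallel component lies in $(0,|\beta_0|]$ because $s$ is the \emph{shortest} such cross curve, while the perpendicular component is the cylinder height, which is at most $1/|\beta_0|$ since $\beta_0$ has area at most $1$ on the unit-area surface. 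Thus $|s|\le|\beta_0|$ once $|\beta_0|>1$, giving $|s_t|\le(t+1)|\beta_0|$. The two displayed inequalities follow by plugging in the interval constraints on $t$, and the final assertion $|\beta_0|<|s_t|$ is just the lower bound together with $\tfrac{2\log|\beta_0|}{\delta}>1$.

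For \eqref{F:L2}, the transformation $(X_0,\omega_0)\to(X_1,\omega_1)$ is the composition of the rotation sending $s_t$ to vertical followed by $g_T$ with $T=\log|s_t|$. So a holonomy vector $v_1=(a,b)$ on $(X_1,\omega_1)$ pulls back to a vector of length $\sqrt{a^2/|s_t|^2 + b^2|s_t|^2}$ on $(X_0,\omega_0)$. Applied to the core curve of $\beta_1$, which has length in $[\delta,L]$ (the lower bound using that the protochild surface is $(\delta,c)$-thick and $\beta_1$ has area $\ge c$) and makes angle $\ge\theta_1\ge\pi/4$ with the horizontal, this yields
\[
\tfrac{\delta}{\sqrt{2}}|s_t|\;\le\;|v_1|\sin\phi\cdot|s_t|\;\le\;|\beta_1|\;\le\;|v_1|\cdot|s_t|\;\le\;L|s_t|,
\]
where the upper bound uses $|s_t|\ge1$ so that the $|s_t|$-stretched coordinate dominates. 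Combining with \eqref{F:L1} gives the claimed estimate $|\beta_0|\log|\beta_0|\le|\beta_1|\le M|\beta_0|\log|\beta_0|$.

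For \eqref{F:L3}, the upper bound $|\beta_1|\le L(t+1)|\beta_0|$ from \eqref{F:L1}--\eqref{F:L2} gives $|\beta_0|/|\beta_1|\ge 1/(L(t+1))$. For the logarithmic ratio, the bound $|\beta_1|\le M|\beta_0|\log|\beta_0|$ yields $\log|\beta_1|\le\log M+\log|\beta_0|+\log\log|\beta_0|$, which is at most $2\log|\beta_0|$ provided $|\beta_0|$ is large enough that $M\log|\beta_0|\le|\beta_0|$; this is guaranteed by $|\beta_0|>e^M$ and the other size assumptions in Assumption \ref{A1}. Then $\log|\beta_0|/\log|\beta_1|\ge 1/2$, and multiplying gives
\[
\frac{|\beta_0|\log|\beta_0|}{|\beta_1|\log|\beta_1|}\;\ge\;\frac{1}{2L(t+1)}\;\ge\;\frac{1}{6Lt},
\]
the last inequality using $t\ge 1$, which holds since $t>2\log|\beta_0|/\delta>2$ by Assumption \ref{A1}.

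The work is entirely elementary; the only subtle point is verifying that the lower bounds on $|\beta_0|$ built into Assumption \ref{A1} are large enough so that the error terms $|s|\le|\beta_0|$, $|s_t|\ge 1$, and $\log|\beta_1|\le 2\log|\beta_0|$ are all simultaneously available. This is really just bookkeeping rather than a genuine obstacle.
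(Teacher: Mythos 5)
Your proof is correct and takes essentially the same route as the paper: items (1) and (2) are the elementary holonomy computations the paper itself leaves as ``immediate,'' and for (3) you bound $|\beta_0|/|\beta_1|$ and $\log|\beta_0|/\log|\beta_1|$ separately and multiply, exactly as in the paper (which gets $1/(2Lt)$ and $1/3$; your constants are slightly better). The one small overreach is your claim $|s|\le|\beta_0|$ in (1): with parallel component in $(0,|\beta_0|]$ and perpendicular height at most $1/|\beta_0|$ one only gets $|s|\le\sqrt{|\beta_0|^2+|\beta_0|^{-2}}$, marginally above $|\beta_0|$ --- but this is the same harmless detail the paper suppresses, and it is absorbed by any trivially weaker constant in the $(t+1)|\beta_0|$ upper bound with no effect downstream.
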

\begin{proof}
Only the proof of (\ref{F:L3}) remains to be given. First, 
\[ \frac{|\beta_0|}{|\beta_1|} \geq \frac{|\beta_0|}{L|s_t|} \geq \frac{|\beta_0|}{2Lt|\beta_0|} = \frac{1}{2Lt} \]
where the first inequality is from (\ref{F:L2}) and the second is from (\ref{F:L1}). Finally, by (\ref{F:L2}) we have
\[ \frac{\log |\beta_1|}{\log |\beta_0|} \leq \frac{M}{\log |\beta_0|} + \frac{\log |\beta_0|}{\log |\beta_0|}  + \frac{\log \log |\beta_0|}{\log |\beta_0|} \leq 3 \]
where the final inequality comes from the fact that each summand is less than $1$. 
\end{proof}

\begin{lemma}[Angle Facts 1]\label{L:angle-facts1}
The following facts hold for the child selection process,
\begin{enumerate}
\item \label{F:A1F1} Since $|\beta_0 \times s_t| = \mathrm{area}(\beta_0)$, 
\[\frac{c}{|\beta_0||s_t |} \leq  \sin \theta(\beta_0, s_t) = \frac{\mathrm{area}(\beta_0)}{|\beta_0||s_t|} \leq \frac{1}{|\beta_0||s_t|} \]
\item \label{F:A1F2} The largest angle that $\beta_1$ makes with the vertical is when it lies in the direction of $(\cos \theta_1, \sin \theta_1)$, which pulls back to $(\frac{\cos \theta_1}{|s_t|}, |s_t| \sin \theta_1)$ on $(X_0, \omega_0)$. Therefore, $$ \left| \tan \theta(\beta_1, s_t) \right| \leq \frac{\cot \theta_1}{|s_t|^2} \leq \frac{(\delta/2)^2}{|\beta_0|^2 \log^2 |\beta_0|}.$$
\item \label{F:A1F3} Since $$|s_t \times s_{t'}| = \left| \left( s + t \beta_0 \right) \times \left( s + t' \beta_0 \right) \right| = \mathrm{area}(\beta_0)|t-t'|$$ it follows that $$ \left| \sin \theta(s_t, s_{t'}) \right| = \frac{\mathrm{area}(\beta_0) |t - t'|}{|s_t| |s_{t'}|}.$$ If additionally, $t,t'\in \left[ \frac{2 \log|\beta_0|}{\delta} , \frac{M \log|\beta_0|}{2L} \right]$ and $|t-t'|\geq 1$, then $$ \frac{c(2L/M)^2}{|\beta_0|^2 \log^2 |\beta_0|} \leq \left| \sin \theta(s_t, s_{t'}) \right|.$$ 
\item \label{F:A1F4} Suppose that $\beta'$ and  $\beta''$ are children of $\beta_0$ corresponding to $s_{t'}$ and $s_{t''}$ respectively. Suppose   $|t''-t'|\geq 1$.   Suppose also  $|\beta_0| > e^{2\delta}$. Then  
\[ \frac{c|t''-t'|}{16|\beta_0|^2|t't''|} \leq \theta(\beta', \beta'') \]
\end{enumerate}
\end{lemma}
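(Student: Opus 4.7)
The plan is to derive item (\ref{F:A1F4}) by comparing $\theta(\beta', \beta'')$ to the larger and easily estimated angle $\theta(s_{t'}, s_{t''})$, treating the approximations of $\beta'$ and $\beta''$ by the vertical directions on their respective protochild surfaces as small perturbations.

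The first step is the triangle inequality for angles,
\[ \theta(\beta', \beta'') \geq \theta(s_{t'}, s_{t''}) - \theta(\beta', s_{t'}) - \theta(\beta'', s_{t''}). \]
For the main term I use item (\ref{F:A1F3}), namely $\sin \theta(s_{t'}, s_{t''}) = \mathrm{area}(\beta_0)|t' - t''|/(|s_{t'}||s_{t''}|)$. Combining $\mathrm{area}(\beta_0) \geq c$ with the length bound $|s_t| \leq (t+1)|\beta_0| \leq 2t|\beta_0|$ from item (\ref{F:L1}) (valid since $|\beta_0| > e^{2\delta}$ forces $t', t'' > 4$) and using $\theta \geq \sin \theta$, this yields $\theta(s_{t'}, s_{t''}) \geq c|t' - t''|/(4|\beta_0|^2 t't'')$.

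For the two error terms I return to the formula $|\tan \theta(\beta_i, s_{t_i})| \leq \cot \theta_1/|s_{t_i}|^2$ from item (\ref{F:A1F2}) but keep only the cruder bound $|s_{t_i}|^2 \geq t_i^2|\beta_0|^2$ (from item (\ref{F:L1})), rather than further replacing $t_i$ by the smaller $2 \log |\beta_0|/\delta$ as was done inside (\ref{F:A1F2}) itself. Invoking the hypothesis $\cot\theta_1 < c/16$ from Assumption \ref{A1} and $\theta \leq \tan \theta$ for small positive $\theta$, this gives $\theta(\beta_i, s_{t_i}) \leq c/(16\, t_i^2 |\beta_0|^2)$. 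The sum of the two errors is then reduced to the elementary inequality
\[ \frac{1}{(t')^2} + \frac{1}{(t'')^2} \leq \frac{3|t' - t''|}{t' t''} \qquad \text{for } t', t'' \geq 1, \; |t' - t''| \geq 1, \]
which, setting $t'' = t' + h$ with $h \geq 1$, rearranges to $(3h - 2)(t')^2 + (3h^2 - 2h)t' - h^2 \geq 0$ and is immediate.

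Combining, the total error is at most $3c|t' - t''|/(16|\beta_0|^2 t' t'')$, and subtracting from the main lower bound $c|t' - t''|/(4|\beta_0|^2 t' t'')$ leaves exactly $c|t' - t''|/(16|\beta_0|^2 t' t'')$, as desired. The only thing to watch is that this arithmetic is tight: the slack $1/4 - 3/16 = 1/16$ matches the target coefficient only because the sharpened hypothesis $\cot\theta_1 < c/16$ from Assumption \ref{A1} is brought into the error estimate, rather than the weaker $\cot\theta_1 \leq 1$ which was enough for (\ref{F:A1F2}) as stated. Everything else is routine.
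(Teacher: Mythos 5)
Your proof of item (\ref{F:A1F4}) is correct and follows essentially the same route as the paper's: the triangle inequality, item (\ref{F:A1F3}) with $\sin\theta\leq\theta$ for the main term, item (\ref{F:A1F2}) with $\theta\leq\tan\theta$ and $\cot\theta_1<c/16$ for the errors, then an algebraic cleanup. Where the paper works with the polynomial inequality $4(t'-t'')t't''-t'^2-t''^2\geq(t'-t'')(2t't''-t'-t'')$ and factors to $(t'-t'')(t''(t'-1)+t'(t''-1))$, you instead isolate the inequality $1/(t')^2+1/(t'')^2\leq 3|t'-t''|/(t't'')$ and verify it directly; these are different packagings of the same elementary step and both are fine.
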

\begin{proof}
Only the proof of (\ref{F:A1F4}) remains to be given. By the triangle inequality, 
\[ \theta(\beta'', \beta') \geq \theta(s_t'', s_t') - \theta(s_t'', \beta'') - \theta(s_t', \beta') \]
Since $\sin \theta \leq \theta \leq \tan \theta$ for all $0 \leq \theta \leq \frac{\pi}{2}$, we have
\[  \theta(\beta'', \beta') \geq \sin \theta(s_t'', s_t') - \tan \theta(s_t'', \beta'') - \tan \theta(s_t', \beta') \]
By (\ref{F:A1F2}) and (\ref{F:A1F3}) we have
\[ \theta(\beta'', \beta') \geq  \frac{\mathrm{area}(\beta_0) |t'' -t'|}{|s_t''| |s_t'|} - \frac{\cot \theta_1}{|s_t'|^2} - \frac{\cot \theta_1}{|s_t''|^2}\]
Assume without loss of generality that $t' > t''$. Using the estimate that $\cot \theta_1 < \frac{c}{16}$, 
\[ \theta(\beta'', \beta') \geq \frac{c}{4|\beta_0|^2} \left( \frac{t'-t''}{t''t'} - \frac{(1/4)}{t'^2} - \frac{(1/4)}{t''^2} \right)\]
Now since $t'\geq t''+1$, 
\[ 4(t'-t'')t't'' - t'^2 - t''^2 \geq 2(t'-t'')t''t' - (t'-t'')(t'+t'') \]
The right hand side is equal to
\[ (t'-t'')(2t''t' - t' - t'') = (t'-t'')\left( t''(t'-1) + t'(t''-1) \right) \]
Therefore, we have 
\[ \theta(\beta'', \beta') \geq \frac{c|t'-t''|}{16|\beta_0|^2t''t'} \left( \frac{t'-1}{t'} +  \frac{t''-1}{t''} \right)\]
Since $|\beta_0| > e^{2\delta}$, it follows that $t'$ and $t''$ are are greater than $1$, hence
\[ \theta(\beta'', \beta') \geq \frac{c|t'-t''|}{16|\beta_0|^2t''t'} \]
\end{proof}

\begin{defn}
Given a cylinder $\beta$, let $\theta_\beta$ be the angle it makes with the horizontal, let $I_\beta$ be the interval of angles centered at $\theta_\beta$ with radius $\frac{1}{|\beta|^2 \log |\beta|}$. Define $N_\beta := \log |\beta|$ and  $\rho_{\beta} := \frac{C}{\log |\beta|}$.
\end{defn}

\begin{lemma}[Angle Facts 2]\label{L:angle-facts2}
Suppose that $|\beta_0| > \max(21, e^{4/C})$. The following facts hold for the child selection process,
\begin{enumerate}
\item \label{L:AF2F1} Suppose $\beta_1$ is a child of $\beta_0$. Then $I_{\beta_1} \subseteq I_{\beta_0}$
\item \label{L:AF2F2} Suppose that $\beta'$ and $\beta''$ are distinct children of $\beta_0$ corresponding to $s_{t'}$ and $s_{t''}$ respectively and suppose that $|t' - t''| \geq 1$. Then the distance between $I_{\beta'}$ and $I_{\beta''}$ is at least $\rho_{\beta_0} |I_{\beta_0}|$.
\end{enumerate}
\end{lemma}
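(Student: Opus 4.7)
For (1), the plan is to bound the distance between the centers $\theta_{\beta_0}$ and $\theta_{\beta_1}$ by routing through the intermediate direction of $s_t$. The triangle inequality gives
\[
|\theta_{\beta_0} - \theta_{\beta_1}| \;\leq\; \theta(\beta_0, s_t) + \theta(s_t, \beta_1),
\]
and Angle Fact~\ref{L:angle-facts1}(\ref{F:A1F1}) and Angle Fact~\ref{L:angle-facts1}(\ref{F:A1F2}) control the two summands. Combining with the Length Fact~\ref{L:length-facts}(\ref{F:L1}) lower bound $|s_t|\geq (2/\delta)|\beta_0|\log|\beta_0|$ and the standing hypothesis $\cot\theta_1 < c/16$, both summands are seen to be of order at most $\delta/(|\beta_0|^2 \log|\beta_0|)$. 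Meanwhile Length Fact~\ref{L:length-facts}(\ref{F:L2}) gives $|\beta_1| \geq |\beta_0|\log|\beta_0|$, so the radius of $I_{\beta_1}$ is at most $1/(|\beta_0|^2 \log^3|\beta_0|)$. Summing the three contributions, the hypotheses in Assumption~\ref{A1} on $\delta$ and $|\beta_0|$ ensure that the total is strictly less than the radius $1/(|\beta_0|^2 \log|\beta_0|)$ of $I_{\beta_0}$, which is the containment.

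For (2), I would write
\[
\mathrm{dist}(I_{\beta'}, I_{\beta''}) \;=\; \theta(\beta', \beta'') \;-\; \frac{1}{|\beta'|^2 \log|\beta'|} \;-\; \frac{1}{|\beta''|^2 \log|\beta''|}
\]
and lower bound the right side. Angle Fact~\ref{L:angle-facts1}(\ref{F:A1F4}) yields $\theta(\beta',\beta'') \geq c|t'-t''|/(16|\beta_0|^2\, t't'')$, and under the constraint $|t'-t''|\geq 1$ with $t',t''$ in the protochild interval, this is minimized when $t',t''$ are consecutive integers near the upper endpoint $M\log|\beta_0|/(2L)$. In the same worst case, Length Fact~\ref{L:length-facts}(\ref{F:L2}) forces $|\beta'|,|\beta''|$ to be comparably large, so the two radii $1/(|\beta|^2\log|\beta|)$ are each a factor of $\log|\beta_0|$ smaller than the target $\rho_{\beta_0}|I_{\beta_0}| = 2C/(|\beta_0|^2 \log^2|\beta_0|)$ and can be treated as lower-order corrections.

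The main obstacle is the constant bookkeeping in (2). After cancelling the factor $(|\beta_0|^2 \log^2|\beta_0|)^{-1}$ common to both sides, the inequality reduces to a numerical comparison of explicit constants in $c, L, M$, and $\delta$. The specific choice $C = Lc/(16M)$, together with the hypothesis $|\beta_0| > e^{4/C}$ (equivalently $C\log|\beta_0| > 4$) from Assumption~\ref{A1}, is calibrated precisely so that the $O(1/\log|\beta_0|)$ error coming from the two interval radii is dominated by the leading term $\rho_{\beta_0}|I_{\beta_0}|$. It is exactly at this step that the hypothesis on the size of $|\beta_0|$ is used, and the rest of the proof is careful tracking of the various powers of $c, \delta, L, M$, and $\log|\beta_0|$.
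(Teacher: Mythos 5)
Your proposal matches the paper's argument in both structure and ingredients: for (1) you bound $|\theta_{\beta_0}-\theta_{\beta_1}|$ by routing through $s_t$ via the triangle inequality and invoking Angle Facts~\ref{L:angle-facts1}(\ref{F:A1F1})--(\ref{F:A1F2}) and Length Facts~\ref{L:length-facts}(\ref{F:L1})--(\ref{F:L2}), and for (2) you write the distance as $\theta(\beta',\beta'')$ minus the two interval radii, bound $\theta(\beta',\beta'')$ below via Angle Fact~\ref{L:angle-facts1}(\ref{F:A1F4}) in the worst case near the upper endpoint of the protochild interval, and observe the radii are an $O(1/\log|\beta_0|)$ correction — exactly what the paper does. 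The only thing you defer is the explicit constant bookkeeping against $C = Lc/(16M)$ and the hypothesis $|\beta_0| > e^{4/C}$, which in the paper is the one-line final comparison, so this is the same proof with the arithmetic left unwritten.
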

\begin{proof}
(Proof of (\ref{L:AF2F1})) The radius of $I_{\beta_1}$ is $\frac{1}{|\beta_1|^2 \log |\beta_1|}$ and so the largest angle between an element of $I_{\beta_1}$ and $\beta_0$ is bounded by
\[ \frac{1}{|\beta_1|^2 \log |\beta_1|} + \theta(s_t, \beta_0) + \theta(s_t, \beta_1) \]  for any $s_t$.
Since $|\beta_0| > 4$, it follows that from Lemma \ref{L:angle-facts1} (\ref{F:A1F1}) that $\sin \theta(\beta_0, s_t) \leq \frac{1}{4}$ and hence that $\theta(\beta_0, s_t) \leq 2 \sin \theta(\beta_0, s_t)$. From this observation and from Lemma \ref{L:angle-facts1} (\ref{F:A1F2}), we have that the largest angle between an element of $I_{\beta_1}$ and $\beta_0$ is at most
\[ \frac{1}{|\beta_1|^2 \log |\beta_1|} + \frac{2}{|\beta_0||s_t|} + \frac{(\delta/2)^2}{|\beta_0|^2 \log^2 |\beta_0|} \]
By Lemma \ref{L:length-facts} this is at most
\[ \frac{1}{|\beta_0|^2 \log^2 |\beta_0|} + \frac{\delta}{|\beta_0|^2 \log |\beta_0| } + \frac{(\delta/2)^2}{|\beta_0|^2 \log^2 |\beta_0|} \]
Since $|\beta_0| > 21$, $\frac{1}{\log |\beta_0|} < \frac{1}{3}$. Since $\delta < \frac{1}{3}$ and so the largest angle between an element of $I_{\beta_1}$ and $\beta_0$ is less than $\frac{1}{|\beta_0|^2 \log |\beta_0|}$ as desired.

(Proof of (\ref{L:AF2F2})) By definition, the interval $I_{\beta'}$ with center $\theta_{\beta'}$ has radius $\frac{1}{|\beta'|^2\log |\beta'|}$ whereas, since $|t' - t''| \geq 1$, we have by Lemma \ref{L:angle-facts1} (\ref{F:A1F4}) that $\theta(\beta',\beta'')\geq \frac{c|t'-t''|}{16|\beta_0|^2t't''}$.  The distance between two distinct intervals $I_{\beta'}$ and $I_{\beta''}$ is at least
\[ \frac{1}{|\beta_0|^2 \log^2 |\beta_0|} \left(\frac{Lc}{8M} - \frac{2}{\log |\beta_0| } \right)  \]
\noindent Since $|\beta_0| > e^{4/C}$, the distance is at least 
$\rho_{\beta_0}|I_{\beta_0}|$ 
as desired.

\end{proof}

%
%

\section{Cylinders that cause thinness are comparable in size to parent cylinders} \label{S:comparable}


We continue to assume Assumptions \ref{A:basic1} and \ref{A1} and keep the notation of Definition \ref{D:CS}. The main result of this section is the following 

\begin{prop} \label{P:comparable}
For sufficiently large $|\beta_0|$, if $\sigma_1$ is a protochild of $\beta_1$ whose protochild surface has a $(\delta,c)$-thin cylinder $\beta_2$ then on $(X_0, \omega_0)$ we have \[ \frac{|\beta_1|}{2\sqrt{2}} \leq |\beta_2|. \]
\end{prop}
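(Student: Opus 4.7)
The approach is proof by contradiction, exploiting the angular-separation property of $\beta_1$ in the collection $\cC$ from Proposition~\ref{P:theta}. The key observation is that the $(\delta,c)$-thinness of $\beta_2$ on $(X_2,\omega_2)$ forces $\beta_2$ to be nearly parallel to $\sigma_1$ on $(X_1,\omega_1)$; since $\sigma_1$ is in turn nearly parallel to $\beta_1$ (being a long Dehn twist of a cross-curve of $\beta_1$), $\beta_2$ is nearly parallel to $\beta_1$. Combined with the fact that $\beta_1$ was selected to be at angular distance at least $\theta_0$ from every \emph{shorter} element of $\cC$, this pins down $|\beta_2|_1 \ge |\beta_1|_1$, which transports back to $(X_0,\omega_0)$ by a direct coordinate computation.

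For the angle estimate, rotate $(X_1,\omega_1)$ so that $\sigma_1$ is vertical and write $\beta_2=(a,b)$ in these coordinates, with $\ell=|\sigma_1|_1$. The thinness condition $|\beta_2|_2\le\delta$ together with $(X_2,\omega_2)=g_{\log\ell}(X_1,\omega_1)$ gives $|a|\le\delta/\ell$. The $(\delta,c)$-thickness of $(X_1,\omega_1)$ (built into the child selection process) combined with $\mathrm{area}(\beta_2)\ge c$ gives $|\beta_2|_1>\delta$, so $\sin\theta(\beta_2,\sigma_1)\le 1/\ell$. The cross-product identity yields $\sin\theta(\sigma_1,\beta_1)=\mathrm{area}(\beta_1)/(\ell|\beta_1|_1)\le 1/(\ell|\beta_1|_1)$. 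The triangle inequality then bounds $\theta(\beta_2,\beta_1)$; for $|\beta_0|$ large enough (so that $\ell$ is large via Lemma~\ref{L:length-facts}(\ref{F:L1}) applied on $(X_1,\omega_1)$), this is less than $\theta_0$. A case split on whether $|\beta_2|_1<L$ or $|\beta_2|_1\ge L$ then gives $|\beta_2|_1\ge|\beta_1|_1$: in the former case $\beta_2\in\cC$ and Proposition~\ref{P:theta}'s separation forces the inequality; in the latter it is immediate from $|\beta_1|_1\le L$.

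To transport back to $(X_0,\omega_0)$, decompose $\beta_2=\mu\beta_1+w$ on $(X_1,\omega_1)$ with $\mu=(|\beta_2|_1/|\beta_1|_1)\cos\theta(\beta_1,\beta_2)\ge 1-O(1/\ell^2)$ and $|w|=|\beta_2|_1\sin\theta(\beta_1,\beta_2)=O(\delta/\ell+|\beta_2|_1/(\ell|\beta_1|_1))$. Applying $g_{-\log k}$ with $k=|s_t|_0$, and using that $\beta_1$ makes angle at least $\theta_1\approx\pi/2$ with the horizontal (Assumption~\ref{A1}), both $|\beta_1|_0$ and $|\beta_2|_0$ are dominated by the vertical component of the respective vectors times $k$. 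A direct computation yields $|\beta_2|_0\ge\mu|\beta_1|_0-O(k|w|)$; since $|\beta_1|_0\ge k\delta/2$ by Lemma~\ref{L:length-facts}(\ref{F:L2}) and $k|w|=O(k\delta/\ell+k\delta/|\beta_1|_1)$, for $|\beta_0|$ sufficiently large the ratio $|\beta_2|_0/|\beta_1|_0$ is close to $\mu\ge 1$, comfortably exceeding $1/(2\sqrt{2})\approx 0.354$. The main technical obstacle is ensuring the perpendicular-error term $k|w|$ is much smaller than $|\beta_1|_0$, which is what the ``sufficiently large $|\beta_0|$'' hypothesis buys us; the baseline $\mu\ge 1$ provides a wide margin against $1/(2\sqrt{2})$, so the bookkeeping is generous.
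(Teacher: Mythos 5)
Your approach is recognizably the same as the paper's in spirit — show that the thinness of $\beta_2$ on $(X_2,\omega_2)$ forces $\beta_2$ to be nearly parallel to $\sigma_1$, hence to $\beta_1$, invoke the $\theta_0$-separation of Proposition~\ref{P:theta} to conclude $|\beta_2|_1 \ge |\beta_1|_1$, and transport back. Your Step 1 is sound: the bound $\theta(\beta_1,\beta_2)_1 \lesssim 1/\ell + 1/(\ell\delta)$ is unconditional, so the case split on whether $\beta_2$ sits in $\cC$ on $(X_1,\omega_1)$ does close the argument, and in fact you establish $|\beta_2|_1 \ge |\beta_1|_1$ directly rather than by contradiction as the paper does. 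This is clean and slightly nicer than the paper's Step~1.

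The gap is in Step 2, the transport back. You write $k|w| = O\!\left(\frac{k\delta}{\ell} + \frac{k\delta}{|\beta_1|_1}\right)$ and compare to $|\beta_1|_0 \ge \frac{k\delta}{2}$, concluding that the error is negligible for $|\beta_0|$ large. But the term $\frac{k\delta}{|\beta_1|_1}$ only satisfies $\frac{k\delta}{|\beta_1|_1}\le k$ (since $|\beta_1|_1\ge\delta$), and $k$ is \emph{not} small compared to $k\delta/2$ when $\delta<1$. So from your own estimates one only gets $\frac{|\beta_2|_0}{|\beta_1|_0} \ge \mu - O(1/\delta)$, which is useless. The bound does not improve as $|\beta_0|\to\infty$, so the claim "$|\beta_2|_0/|\beta_1|_0$ is close to $\mu\ge1$" is not justified. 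What actually makes the argument work is that $|w|$ and $\mu$ are not independent: since $\sin\theta(\beta_1,\beta_2)_1 \lesssim 1/(\ell\delta)$, one has $|w| \lesssim |\beta_2|_1/(\ell\delta) \lesssim \mu L/(\ell\delta)$, so that the error term is $O(\mu/\ell)$ and hence $o(\mu)$ for $\ell$ large. Without making this coupling explicit, the "bookkeeping is generous" claim does not survive scrutiny: when $\mu$ is near $1$, $|w|$ can naively be $O(1)$, and the transported error $O(k|w|)$ swamps $|\beta_1|_0 = O(k\delta)$. The paper sidesteps this entirely by arguing by contradiction and using a sharper intermediary — Lemma~\ref{L:cs-1}, whose Step~2 (the $\cot\theta'$ bound) is precisely what gives a lower bound on $|\beta_2|_0$ of the form $\delta|\sigma_0|\sin\theta_2$, leading to $\sin\alpha_0 \le c_3/(|\beta_1|_0^2\log|\beta_1|_0)$ with the crucial $\log$ decay; this is then fed into the $L^1$-versus-$L^2$ comparison on $(X_1,\omega_1)$ together with the assumption $|\beta_2|_0\le|\beta_1|_0/(2\sqrt{2})$ to force a contradiction. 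You gesture at "using that $\beta_1$ makes angle at least $\theta_1\approx\pi/2$" but never prove the directional constraint on $\beta_2$ (the analogue of Step~2 of Lemma~\ref{L:cs-1}) that your transport actually needs. To repair your proof you should either (i) derive $|w|=O(\mu/\ell)$ explicitly and rerun the estimate against $(\mu-\tfrac{1}{2\sqrt{2}})|\beta_1|_0 \gtrsim \mu|\beta_1|_0$, or (ii) follow the paper's route through Lemma~\ref{L:cs-1} and the $L^1$-$L^2$ comparison.
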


 We make the following assumption for this section. 

\begin{ass}\label{A2}
Let $\sigma_1$ be a protochild of $\beta_1$ whose protochild surface $(X_2, \omega_2)$ has a $(\delta, c)$-thin cylinder $\beta_2$. Rename the protochild of $\beta_0$ to $\sigma_0$ and suppose without loss of generality, that it is vertical on $(X_0, \omega_0)$. 
\end{ass} 

\begin{rem}
Note here that the subscripts $0,1,$ and $2$ do not refer to times  but to labeling. Also, recall the convention that all angles and lengths will be measured on the $(X_0, \omega_0)$ unless otherwise mentioned. 
\end{rem}

\begin{lemma}\label{L:cs-1}
For sufficiently long $\beta_0$, there is some constant $c_3$ depending only on the constants in Assumption \ref{A1} so that the angle $\phi$ between $\beta_1$ and $\beta_2$ satisfies 
\[ \phi \leq \frac{c_3}{|\beta_1|^2 \log |\beta_1|} \]
\end{lemma}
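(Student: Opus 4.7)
The plan is to reduce the angle bound to a cross-product computation by identifying $\beta_2$'s direction with that of some vector $s_{t_0}=s'+t_0\beta_1$, where $s'$ is the shortest cross saddle of $\beta_1$ on $(X_1,\omega_1)$, and then to exploit $\mathrm{SL}(2,\mathbb{R})$-invariance of cross products.

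First I would apply Definition \ref{D:thin-set} with $\beta_1$ playing the role of the parent on $(X_1,\omega_1)$. In the non-trivial case where $\beta_2$ is not parallel to $\beta_1$ (the parallel case gives $\phi = 0$ and is handled via Lemma \ref{L:angle-between} when $|\beta_2|$ is comparable to $|\beta_1|$), there is a unique $t_0 \in \mathbb{R}$ such that the holonomy of $\beta_2$ on $(X_1,\omega_1)$ is a real scalar multiple of that of $s_{t_0}$. Since the change of flat structure from $(X_1,\omega_1)$ to $(X_0,\omega_0)$ is a linear map (a rotation followed by $g_{-\log|\sigma_0|}$), parallelism is preserved, so $\beta_2$ is also parallel to $s_{t_0}$ on $(X_0,\omega_0)$. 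Consequently $\theta(\beta_1,\beta_2) = \theta(\beta_1,s_{t_0})$, measured on $(X_0,\omega_0)$.

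Next, since $\beta_1 \times \beta_1 = 0$, we have $\beta_1\times s_{t_0} = \beta_1\times s'$, and $|\beta_1\times s'|$ equals the area $A_1 = \mathrm{area}(\beta_1)\leq 1$ because $s'$ crosses the cylinder $\beta_1$ exactly once. Cross products are preserved by both rotations and by $g_t$, so this identity transfers to $(X_0,\omega_0)$, yielding
\[
\sin\phi \;=\; \sin\theta(\beta_1,s_{t_0}) \;=\; \frac{A_1}{|\beta_1|\cdot |s_{t_0}|},
\]
with all lengths measured on $(X_0,\omega_0)$.

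The remaining task is to prove a lower bound $|s_{t_0}| \gtrsim |\beta_1|\log|\beta_1|$. The hypothesis that $\sigma_1 = s_{t^\ast}$ is an actual protochild of $\beta_1$ places $t^\ast$ inside the protochild range, and Lemma \ref{L:basic-interval} shows that the center $t_0$ of $I_1(\beta_2)$ is close to $t^\ast$. Decomposing $s_{t_0} = s' + t_0\beta_1$ and invoking the triangle inequality gives $|s_{t_0}|_0 \geq \tfrac{1}{2}t_0|\beta_1|_0$ once $|\beta_0|$ is large enough that the $|s'|_0$ contribution is dominated by $t_0|\beta_1|_0$. Combining this with the scalings $|\beta_1|_0 \sim |\beta_1|_1\sin\psi\cdot|\sigma_0|$, $|\sigma_0|\gtrsim |\beta_0|\log|\beta_0|/\delta$, and the comparability $\log|\beta_1|_0\sim\log|\beta_0|$ from Lemma \ref{L:length-facts}, the desired lower bound follows and yields the lemma with $c_3$ depending only on the constants in Assumption \ref{A1}.

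The main obstacle is extracting the logarithmic factor in $|s_{t_0}|$. A naive use of the protochild-range lower bound $t^\ast \geq 2\log|\beta_1|_1/\delta$ produces only a constant (since $|\beta_1|_1 \leq L$), which would give a weaker bound of the form $\phi \leq c_3/|\beta_1|^2$. The improvement requires carefully tracking how the protochild interval interacts with the scaling of lengths between $(X_0,\omega_0)$ and $(X_1,\omega_1)$, using that $|\sigma_0|$ is large of order $|\beta_0|\log|\beta_0|/\delta$ to convert the $\log|\beta_1|_1$ arising from the protochild range on $(X_1,\omega_1)$ into the desired $\log|\beta_1|_0$ on $(X_0,\omega_0)$.
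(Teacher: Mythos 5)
Your underlying identity is correct and attractive: since $\beta_2$ is parallel to $s_{t_0}$ and cross products are $\mathrm{SL}(2,\mathbb{R})$-invariant, $\sin\phi = \frac{\mathrm{area}(\beta_1)}{|\beta_1|_0\,|s_{t_0}|_0}$, and the lemma reduces to a lower bound on $|s_{t_0}|_0$. The paper's proof does not go this way; it instead pivots on the protochild $\sigma_1$ that the hypothesis hands you, bounding $\phi \leq \theta(\beta_1,\sigma_1)+\theta(\sigma_1,\beta_2)$ after first showing (Steps 1 and 2) that the direction of $\beta_2$ on $(X_1,\omega_1)$ is bounded away from horizontal. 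Both terms then pick up a factor $|\sigma_1|^{-1}$ and the fact $|\sigma_1|_0\gtrsim |\beta_1|_0\log|\beta_1|_0$ is immediate from $\sigma_1$ being an actual protochild with index $t^\ast$ inside the protochild range. Your approach replaces the pivot $\sigma_1$ by $s_{t_0}$, whose index $t_0$ is \emph{not} a priori in the protochild range; that is where the missing work lives.

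The gap is precisely the step you flag in your last paragraph but do not close: you must show that $|t_0|$ is comparable to $t^\ast$ (hence of order $\log|\beta_1|_0/\delta$), rather than, say, of order $1$. You invoke Lemma \ref{L:basic-interval} to say $t_0$ is ``close to $t^\ast$,'' but what that lemma gives is $|t^\ast - t_0| \le \frac{\delta\,|s_{t_0}|_1}{\mathrm{area}(\beta_1)\,|\beta_2|_1}$, a bound that involves $|s_{t_0}|_1$ (which grows linearly in $|t_0|$) and $|\beta_2|_1$ (for which thickness of $(X_1,\omega_1)$ gives only the weak lower bound $\delta$). Solving this self-referential inequality to conclude $|t_0|\gtrsim t^\ast\cdot c/(L+c)$ is a real argument and it is not in your write-up; simply asserting closeness does not give the logarithm. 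A second, smaller, issue: Definition \ref{D:thin-set} and Lemma \ref{L:basic-interval} are stated under the hypothesis $\frac{|\beta_1|}{2\sqrt2}\le|\beta_2|$, which is the conclusion of Proposition \ref{P:comparable}, itself proved \emph{from} Lemma \ref{L:cs-1}. The formula in Lemma \ref{L:basic-interval} is really just the elementary cross-product computation and holds whenever $\beta_2$ is not parallel to $\beta_1$ (so no circularity is inherent), but you should say so explicitly rather than cite the definition/lemma wholesale; as written, it reads as though you are assuming what the section is building toward. Finally, the parenthetical claim that the parallel case is ``handled via Lemma \ref{L:angle-between} when $|\beta_2|$ is comparable to $|\beta_1|$'' is unnecessary and confusing: parallelism gives $\phi=0$ directly, whereas Lemma \ref{L:angle-between} needs the very comparability that is unavailable here.
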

\begin{proof} 
Let $\theta=\theta(\sigma_0,\sigma_1)$ be the angle between $\sigma_0$ and $\sigma_1$. 
Let $\theta'$ be the angle that the holonomy vector of $\beta_2$ makes with the horizontal on $(X_1, \omega_1)$. We proceed in three steps. 

\noindent \textbf{Step 1: For $\log|\beta_0| \geq \frac{4}{\delta\cot\theta_1}$, $\theta(\sigma_0, \sigma_1) \leq \frac{3\cot \theta_1}{|\sigma_0|^2}$.}

\noindent By Lemma \ref{L:angle-facts1} (\ref{F:A1F2}), 
\[ \theta(\beta_1, \sigma_0) \leq \tan \theta(\beta_1, \sigma_0) \leq \frac{\cot \theta_1}{|\sigma_0|^2}. \]
Similarly, by Lemma \ref{L:angle-facts1} (\ref{F:A1F1}) and that fact that $\theta(\beta_1, \sigma_1)$ is less than $\frac{\pi}{2}$, 
\[ \theta(\beta_1, \sigma_1) \leq 2 \sin \theta(\beta_1, \sigma_1) \leq \frac{2}{|\sigma_1||\beta_1|}. \]
The triangle inequality now implies that
\[ \theta=\theta(\sigma_0, \sigma_1) \leq \frac{2\cot \theta_1}{|\sigma_0|^2} + \frac{2}{|\sigma_1||\beta_1|} = \frac{1}{|\sigma_0|^2} \left( 2 \cot \theta_1 + 2 \frac{|\sigma_0|}{|\sigma_1|}  \frac{|\sigma_0|}{|\beta_1|}  \right). \] 

Again by Lemma~\ref{L:length-facts}, the ratio $ \frac{|\sigma_0|}{|\beta_1|}\leq \frac{2}{\delta}$  and the ratio $\frac{|\sigma_0|}{|\sigma_1|}\leq \frac{1}{\log |\beta_0|}$ so our choice of $|\beta_0|$  gives 
\[ \theta \leq \frac{3\cot \theta_1}{|\sigma_0|^2} \]


\noindent \textbf{Step 2: If $|\beta_0|\geq\sqrt{\frac{12L^2\cot\theta_1}{\pi}}$ and $\log |\beta_0|\geq  \frac{1}{\cot \theta_1}$ then  $|\cot \theta'| \leq 10 \cot \theta_1$}.

 The matrix that passes from $(X_1, \omega_1)$ to $(X_2, \omega_2)$ is 
\[ g:= g_{\log |\sigma_1|} r_\theta g_{-\log |\sigma_0|} = \begin{pmatrix} \frac{|\sigma_1|}{|\sigma_0|} \cos \theta &  -|\sigma_0||\sigma_1|\sin \theta \\ \frac{\sin \theta}{|\sigma_0||\sigma_1|} & \frac{|\sigma_0|}{|\sigma_1|} \cos \theta \end{pmatrix} \]

\noindent Therefore, if some vector $(h, v)$ has length less than $\delta$ after applying $g$, it follows that
\[ \left| h\frac{|\sigma_1|}{|\sigma_0|} \cos \theta - v |\sigma_0||\sigma_1|\sin \theta \right| \leq \delta \]
By the triangle inequality,
\[ |h| \frac{|\sigma_1|}{|\sigma_0|} |\cos \theta| \leq |v| |\sigma_0||\sigma_1| |\sin \theta| + \delta \]
Since $|\sigma_0| > \frac{|\beta_0|}{L}$, Step 1 implies 
$$\theta\leq \frac {3L^2\cot \theta_1}{|\beta_0|^2}$$
so that by our choice of $|\beta_0|$ we have $\cos \theta > \frac{1}{2}$. This implies that 
\[|h| \leq 2 \delta  \frac{|\sigma_0|}{|\sigma_1|}  + 2 |v| |\sigma_0|^2 \theta \]
 
By Step 1 we have
\[ |h| \leq 2 \delta  \frac{|\sigma_0|}{|\sigma_1|}  + 6 \cot \theta_1 |v| \]

Assume now that $(h, v)$ is the holonomy of $\beta_2$ on $(X_1, \omega_1)$. Since $(X_1, \omega_1)$ is $(\delta, c)$-thick it implies that $(h, v)$ has length at least $\delta$. Since $\frac{|\sigma_0|}{|\sigma_1|}\leq \frac{1}{|\log |\beta_0|}$ it follows  that $\frac{|\sigma_0|}{|\sigma_1|} < \cot \theta_1<\frac{1}{16}$. Notice that 
$|v|>\frac{\delta}{2}$ 
since otherwise by the above bound on $|h|$  both $|v|$ and $|h|$ would be smaller than $\frac{\delta}{2}$ (and hence $(h, v)$ is not longer than $\delta$). Then $2\delta<4|v|$.
Combined with the previous estimate we see that 
\[ \cot \theta'=\frac{|h|}{|v|} < 10 \cot \theta_1 \]

\noindent \textbf{Step 3: For  $|\beta_0|^2\geq \max(\frac{4L}{\pi\sin\theta_2},\frac{8}{\delta \pi})$ there is some constant $c_3$ depending only on the constants in Assumption \ref{A1} so that $\phi \leq \frac{c_3}{|\beta_1|^2 \log |\beta_1|}$.} 

Define $\theta_2 := \mathrm{arccot}(10 \cot \theta_1)$. By Step 2, the vertical part of the holonomy of $\beta_2$ on $(X_1, \omega_1)$ is at least $\delta \sin \theta_2$. Therefore, on $(X_0, \omega_0)$,
\[ |\beta_2| \geq \delta |\sigma_0| \sin \theta_2 \]
Let $\alpha_1$ be the angle between $\sigma_1$ and $\beta_2$. Let $(X_0', \omega_0')$ be the surface $(X_0, \omega_0)$ rotated so that $\sigma_1$ is vertical. On this surface the horizontal part of the holonomy of $\beta_2$ is at most $\frac{\delta}{|\sigma_1|}$ since $\beta_2$ has length less than $\delta$ on $(X_2, \omega_2)$. Therefore, by the above lower bound on $|\beta_2|$
\[ \sin \alpha_1 \leq \frac{\delta}{|\beta_2||\sigma_1|} \leq \frac{1}{\sin \theta_2 |\sigma_0||\sigma_1|}\leq \frac{L}{\sin \theta_2 |\beta_0|^2\log |\beta_0|} \]
Let $\alpha_2$ be the angle between $\sigma_1$ and $\beta_1$.  We have  
\[ \sin \alpha_2 \leq \frac{1}{|\beta_1||\sigma_1|}\leq \frac{2}{\delta |\beta_0|^2(\log |\beta_0|)^2}. \]
 Now are choice of $|\beta_0|$ says $$\frac{\alpha_i}{2} \leq \sin \alpha_i,$$ for $i \in \{1, 2\}$ 

By the triangle inequality, 
\[ \phi \leq 2  \left(\frac{1}{\sin \theta_2 |\sigma_0||\sigma_1|}+\frac{1}{|\beta_1||\sigma_1|}\right) \]

By Lemma \ref{L:length-facts}
 $|\beta_1|$ is comparable to $|\sigma_0|$ and $|\sigma_1|\geq |\beta_1| \log |\beta_1|$.
It follows that there is a constant $c_3$ only depending on the constants in Assumption~\ref{A1} 
\[ \phi \leq \frac{c_3}{|\beta_1|^2 \log |\beta_1|} \]

\end{proof}

\begin{proof}[Proof of Proposition \ref{P:comparable}:]
Suppose to a contradiction that $\frac{|\beta_1|}{2\sqrt{2}} \geq |\beta_2|$. The proof will be divided into two steps:

\noindent \textbf{Step 1: For $\log|\beta_0|\geq \frac{c_3}{2\sqrt{2} \delta^2\sin\theta_0}$, on $(X_1,\omega_1)$ we have $|\beta_2|\geq |\beta_1|$.}

Let $\alpha_i$ be the angle between $\beta_1$ and $\beta_2$ on $(X_i, \omega_i)$ for $i \in \{0, 1\}$. Let $|\cdot|_i$ denote length on $(X_i, \omega_i)$ for $i \in \{0, 1\}$. We have
\[ \delta^2 \sin \alpha_1 \leq |\beta_1|_1 |\beta_2|_1 \sin \alpha_1 = |\beta_1| |\beta_2| \sin \alpha_0 \leq \frac{c_3 |\beta_2|}{|\beta_1| \log |\beta_1|} \]
where the lefthand inequality follows from the fact that $(X_1, \omega_1)$ is $(\delta, c_1)$-thick and the righthand inequality follows from Lemma \ref{L:cs-1}. By assumption, we have
\[ \sin \alpha_1 \leq \frac{c_3 }{2\sqrt{2} \delta^2 \log |\beta_1|}. \]
By our choice of $|\beta_0|$, since $|\beta_1| > |\beta_0|$ it follows that  $\sin \alpha_1 < \sin \theta_0$. However, if $|\beta_2|\leq |\beta_1|$ on $(X_1, \omega_1)$ then by Assumption \ref{A1}, the angle between $\beta_1$ and $\beta_2$ on $(X_1, \omega_1)$ is bounded below by $\theta_0$, which is a contradiction. 

\noindent \textbf{Step 2: For $|\beta_0|\geq \frac{2c_3}{\delta}$ on $(X_0, \omega_0)$  $|\beta_2|\geq \frac{|\beta_1|}{2\sqrt{2}}$ .}

Suppose not to a contradiction. The holonomy vector of the core curve of $\beta_2$ on $(X_0, \omega_0)$ is 
\[ \left( |\beta_2| \cos \left( \varphi + \alpha_0 \right), |\beta_2| \sin \left( \varphi + \alpha_0 \right) \right) \]
where $\varphi$ is the angle that the holonomy vector of $\beta_1$ on $(X_0, \omega_0)$ makes with the horizontal. The holonomy of $\beta_2$ on $(X_1, \omega_1)$ is 
\[ \left( |\sigma_0| |\beta_2| \cos \left( \varphi + \alpha_0 \right), \frac{|\beta_2|}{|\sigma_0|} \sin \left( \varphi + \alpha_0 \right) \right) \]
Therefore we have (using the comparison of $L^1$ and $L^2$ norms on $\R^2$ - $\frac{1}{\sqrt{2}} \|\cdot \|_1 \leq \| \cdot \|_2 \leq \| \cdot \|_1$), 
\[ \frac{1}{\sqrt{2}} \left( |\sigma_0| |\beta_1| \cos \left( \varphi \right) + \frac{|\beta_1|}{|\sigma_0|} \sin \left( \varphi  \right) \right) \leq |\sigma_0| |\beta_2| \cos \left( \varphi + \alpha_0 \right) + \frac{|\beta_2|}{|\sigma_0|} \sin \left( \varphi + \alpha_0 \right)  \]
The right hand side is bounded above by
\[ |\sigma_0||\beta_2| \cos \varphi + |\sigma_0||\beta_2| \sin \alpha_0 +  \frac{|\beta_2|}{|\sigma_0|} \sin \varphi +  \frac{|\beta_2|}{|\sigma_0|} \sin \alpha_0 \]
Subtracting $\frac{1}{2\sqrt{2}} \left( |\sigma_0| |\beta_1| \cos \left( \varphi \right) + \frac{|\beta_1|}{|\sigma_0|} \sin \left( \varphi  \right) \right)$ from both sides of the inequality yields (along with the estimate $|\beta_2| \leq \frac{|\beta_1|}{2\sqrt{2}}$), 
\[ \frac{1}{2\sqrt{2}} \left( |\sigma_0| |\beta_1| \cos \left( \varphi \right) + \frac{|\beta_1|}{|\sigma_0|} \sin \left( \varphi  \right) \right) \leq |\sigma_0||\beta_2| \sin \alpha_0 +   \frac{|\beta_2|}{|\sigma_0|} \sin \alpha_0  \]
Again using the comparison of the norms $\| \cdot \|_1$ and $\| \cdot \|_2$ on $\R^2$ and the fact that $\beta_1$ has length at least $\delta$ on $(X_1, \omega_1)$ we have
\[ \frac{\delta}{2\sqrt{2}} \leq |\beta_2| \sin \alpha_0 +   \frac{|\beta_2|}{|\sigma_0|^2} \sin \alpha_0  \]
It now follows from Lemma \ref{L:cs-1} that
\[ \frac{\delta}{2\sqrt{2}} \leq \frac{c_3 |\beta_2| }{|\beta_1|^2 \log |\beta_1|} +   \frac{c_3 |\beta_2|}{|\sigma_0|^2 |\beta_1|^2 \log |\beta_1|}  \]
Applying the estimate $|\beta_2| \leq \frac{|\beta_1|}{2\sqrt{2}}$, 
\[ \delta \leq \frac{c_3 }{|\beta_1| \log |\beta_1|} +   \frac{c_3 }{|\sigma_0|^2 |\beta_1| \log |\beta_1|} \]
By our condition on $|\beta_0|$ both terms on the right are smaller than $\frac{\delta}{2}$ which yields a contradiction. 
\end{proof}


%
%

\section{Proof of Theorem~\ref{T1}}\label{S:T1}

In this section we give a proof of Theorem~\ref{T1}. The strategy is essentially found in Cheung~\cite[pages 23-24]{Cheung2}, in the context of continued fractions. We choose constants as well as a cylinder $\beta_0$ in the following way. Fix $(X, \omega)$. 

\begin{enumerate}
\item Let $d_1, d_2, d_3, d_4$ be the constants associated to $(X, \omega)$ as in Section \ref{S:started}.
\item Choose $c < \min \left( d_2, \frac{1}{2g(2g+|\Sigma|-2)} \right)$. Being less than the second quantity implies that we may choose $c = c_1 = c_2$ where $c_1$ and $c_2$ satisfy the conditions in Lemma \ref{L:find-cylinder}. Being less than the first quantity is required to satisfy Assumption \ref{A:starting-c}.  
\item Define $D:= \max \left( 2, \sqrt{\frac{2d_4}{d_1}} \right)$.
\item Choose $\delta < \min\left( \frac{c}{512D^4}, \frac{c}{576\sqrt{2}(g-1)}, \mathrm{Sys}(X, \omega) \right)$. Being less than the first quantity means that Assumption \ref{A:starting-c} is now completely satisfied. Being less than the final two quantities is required to satisfy Assumption \ref{A:starting1}. 
\item Choose $\theta_1$ so that $\cot \theta_1 < \frac{c}{16}$; this is required to satisfy Assumption \ref{A1}. 
\item Let $L$ and $\theta_0$ be as in Proposition \ref{P:theta}. Assumption \ref{A:basic1} is now completely satisfied. 
\item Set $\nu := \frac{\delta (192\sqrt{2}g - 192\sqrt{2})}{c}$.
\item Choose $M = \frac{2^{m+2} L}{\delta}$ for a positive integer $m$ such that $m > 6L \left( \log \left( \frac{2}{1+2\nu} \right) \right)^{-1}$ and so that $M >  21$. Assumption \ref{A:basic1} and Assumption \ref{A:starting1} are now completely satisfied. 
\item Define $T_1:= 2g+|\Sigma|-2$, $T_0 = 2^{\left( 2^{4T_1} \right)}$, $R_0' =  \frac{\sqrt{2} T_0^2}{\mathrm{Sys}(X, \omega)}$, $\theta_2 = \mathrm{arccot}\left( 10 \cot \theta_1 \right)$, and $C:= \frac{Lc}{16M}$.
\item Define 
\[ \begin{array}{c}
R_0'' := \max \bigg( R_0',  \mathrm{exp}\left( \frac{4}{d_3}\right), D, \frac{1}{\mathrm{Sys}(X, \omega)}, \mathrm{exp}\left( \frac{4}{C} \right), e^M, \mathrm{exp}\left( \frac{4}{\delta\cot\theta_1} \right),  \\
 \qquad \qquad \sqrt{\frac{12L^2\cot\theta_1}{\pi}},  \mathrm{exp} \left( \frac{1}{\cot \theta_1} \right), \sqrt{\frac{4L}{\pi\sin\theta_2}}, \sqrt{\frac{8}{\delta \pi}}\bigg) 
\end{array}\] 
\item For cylinders of circumference at least $R_0''$, Lemma \ref{L:cs-1} produces a constant $c_3$. 
\item Set $R_0 := \max \left( R_0'', \exp \left( \frac{c_3}{2\sqrt{2} \delta^2\sin\theta_0} \right), \frac{2c_3}{\delta} \right)$
\item By Proposition \ref{P:getting-started}, there is a cylinder $\beta_0$ on $(X, \omega)$ whose circumference is at least $R_0$, area at least $c$, and which contains a protochild $\sigma_0$ whose protochild surface $(X_1, \omega_1)$ is $(\delta, c)$-thick. This means that Assumption \ref{A1} is now completely satisfied. Let $\beta_1$ be the child cylinder chosen in the child-selection process (Definition \ref{D:CS}). 
\end{enumerate}

We will associate a collection of children to $\beta_1$. To each child cylinder constructed in this way we will associate a new collection of child cylinders and so on. We describe this iterative process. Let $\beta$ be a cylinder constructed in this process. Define its collection of child cylinders $D_\beta$ as follows.

Consider the set of protochildren of $\beta$, indexed by $\left( \frac{2 \log |\beta|}{\delta}, 2^m \frac{2 \log |\beta|}{\delta} \right)$. By Proposition \ref{P:comparable} any cylinder that is responsible for $(\delta, c)$-thinness of a protochild surface has circumference of size at least $\frac{|\beta|}{2\sqrt{2}}$. Divide the set of protochildren into sets
\[ I_k := \left( 2^k \frac{2 \log |\beta|}{\delta}, 2^{k+1} \frac{2 \log |\beta|}{\delta} \right) \]
for $k \in \{0, \hdots, m-1\}$. By Corollary \ref{C:lots}, there are at least $( 1- \nu) 2^k \frac{2 \log |\beta|}{\delta} -1$ points, call them $J_k'$, in $I_k$ that are unit distance apart and whose corresponding protochild surface is $(\delta, c)$-thick. Let $J_k$ be the subcollection of $J_k'$ with the largest and smallest points deleted. This is done so that any two distinct points in $J_\beta := \bigcup_{k=0}^{m-1} J_k$ are unit distance apart. Notice that
\[ |J_k| \geq ( 1- \nu) 2^k \frac{2 \log |\beta|}{\delta} - 3 > (1 - 2 \nu) 2^k \frac{2 \log |\beta|}{\delta}  \]
The set of children $D_\beta$ will then be the children constructed in the child-selection process whose indices correspond to the indices in $J_\beta$. We now summarize properties of the children constructed in this process.

\begin{prop}\label{L1}
Notice that if $\beta'$ and $\beta''$ are distinct children of $\beta$ corresponding to indices $t'$ and $t''$, then 
\begin{enumerate}
\item By Lemma \ref{L:length-facts} (\ref{F:L2}), $|\beta| \log |\beta| \leq |\beta'| \leq M |\beta| \log |\beta|$.
\item By Lemma \ref{L:angle-facts2} (\ref{L:AF2F1}), $I_{\beta'} \subseteq I_\beta$.
\item By Lemma \ref{L:angle-facts2} (\ref{L:AF2F2}), the distance between $I_{\beta'}$ and $I_{\beta''}$  is at least $\rho_\beta |I_\beta| = \frac{C}{N_\beta^2 |\beta|^2}$.
\item By Lemma \ref{L:length-facts} (\ref{F:L3}), $\frac{N_\beta|\beta|}{N_{\beta'}|\beta'|} \geq \frac{1}{6Lt'}$.
\end{enumerate}
\end{prop}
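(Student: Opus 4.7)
The plan is to observe that Proposition \ref{L1} is essentially a compilation result: the four assertions follow by directly citing the indicated lemmas, and so the proof reduces to verifying that the hypotheses of those lemmas apply to every cylinder $\beta$ produced by the iterative construction. In particular, I need to check that Assumption \ref{A1} remains in force for each such $\beta$ (not merely for the initial $\beta_0$), and that the indices $t',t''$ arising from the set $J_\beta$ satisfy the separation hypothesis $|t'-t''|\geq 1$ that facts (3) and (4) require.

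First I would unpack the construction to confirm the setup. By definition, a child of $\beta$ is obtained from a protochild $s_t$ whose index $t$ lies in $J_\beta = \bigcup_{k=0}^{m-1} J_k$. By the definition of $J_k$ (the subcollection of $J_k'$ with the largest and smallest points removed), any two distinct indices inside a single $J_k$ are at least unit distance apart, and the gap at the boundary of consecutive $J_k$'s is at least unit distance as well, so $|t'-t''|\geq 1$ holds for any distinct $t',t''\in J_\beta$. Moreover, the protochild surfaces associated to indices in $J_\beta$ are $(\delta,c)$-thick by Corollary \ref{C:lots}, so the child cylinders $\beta'$ and $\beta''$ are indeed produced by the child selection process (Definition \ref{D:CS}) via Proposition \ref{P:theta}.

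Next I would verify, by induction on generation, that every cylinder $\beta$ in the process satisfies Assumption \ref{A1}. The base case $\beta=\beta_0$ holds by the choice of $R_0$ in step (13) of the setup. For the inductive step, Lemma \ref{L:length-facts}(\ref{F:L2}) gives $|\beta'|\geq |\beta|\log|\beta|$, so $|\beta'|>|\beta|$ once $|\beta|>e$; hence all the size thresholds in Assumption \ref{A1} (namely $|\beta|>\max(e^{4/C},21,e^{2\delta},e^M)$) are preserved. In particular, Proposition \ref{P:comparable} applies, ensuring that any $(\delta,c)$-thin cylinder responsible for thinness on a protochild surface has length at least $|\beta|/(2\sqrt 2)$, which justifies using Corollary \ref{C:lots} with the same threshold to build $J_\beta$.

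With this setup in place, each of the four assertions is immediate: (1) is a restatement of Lemma \ref{L:length-facts}(\ref{F:L2}) applied to the parent-child pair $(\beta,\beta')$; (2) is Lemma \ref{L:angle-facts2}(\ref{L:AF2F1}); (3) is Lemma \ref{L:angle-facts2}(\ref{L:AF2F2}), whose hypothesis $|t'-t''|\geq 1$ was verified above, combined with the definitions $N_\beta=\log|\beta|$, $\rho_\beta = C/\log|\beta|$, $|I_\beta|=2/(|\beta|^2\log|\beta|)$; and (4) is Lemma \ref{L:length-facts}(\ref{F:L3}) combined with the identity $N_\beta=\log|\beta|$. The only subtle step, and likely the main (though minor) obstacle, is the inductive verification that Assumption \ref{A1} propagates through generations; once that is secured, everything else is a mechanical invocation of the already-proved lemmas.
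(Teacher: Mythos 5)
Your proposal is correct and takes essentially the same approach as the paper, which presents Proposition~\ref{L1} without a separate proof environment, treating it as a direct compilation of the cited lemmas. You correctly identify the two tacit points needed to make the citations valid: that distinct indices $t',t''\in J_\beta$ are unit-separated (so the hypothesis $|t'-t''|\geq 1$ in Lemma~\ref{L:angle-facts1}(\ref{F:A1F4}) and Lemma~\ref{L:angle-facts2}(\ref{L:AF2F2}) is met), and that Assumption~\ref{A1} propagates through generations because its size thresholds are monotone and Lemma~\ref{L:length-facts}(\ref{F:L2}) forces $|\beta'|>|\beta|$. One small slip: you wrote $|I_\beta| = 2/(|\beta|^2\log|\beta|)$, but for the stated identity $\rho_\beta|I_\beta| = C/(N_\beta^2|\beta|^2)$ in item (3) to hold, and for the closing step of the proof of Lemma~\ref{L:angle-facts2}(\ref{L:AF2F2}) to go through (it needs $2C - \tfrac{2}{\log|\beta_0|}\geq \rho_{\beta_0}|I_{\beta_0}|\,|\beta_0|^2\log^2|\beta_0|$), the quantity $|I_\beta|$ must be read as the \emph{radius} $1/(|\beta|^2\log|\beta|)$, not the diameter.
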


Notice that if $(\beta_n)_{n \geq 0}$ is a sequence of cylinders constructed in the above process so that $\beta_n \in D_{\beta_{n-1}}$ then $(I_{\beta_n})_{n \geq 0}$ is a nested sequence of intervals whose diameter is tending to zero. By the nested interval theorem there is an angle $\theta$ so that $\bigcap_n I_{\beta_n} = \{ \theta \}$. Let $\cD$ be the collection of angles that can be written this way. 

By Cheung \cite[Theorem 3.3]{Cheung2}, given a set $\cD$ constructed in the previously described way and satisfying the four enumerated conditions above, if $s$ is some real number so that for every cylinder $\beta$ constructed in the above process
\[ \sum_{\beta' \in D_\beta} \frac{\rho_{\beta'}^s |I_{\beta'}|^s}{\rho_\beta^s |I_\beta|^s} > 1\]
then the Hausdorff dimension of $\cD$ is at least $s$. 

To prove Theorem \ref{T1} it remains to show that $\cD$ is contained in the set of divergent on average directions and that the above inequality holds for $s = \frac{1}{2}$.

%

\begin{lemma}\label{L2}
Let $0 < \epsilon < 1$. Suppose $\beta'$ is a child of $\beta$ and $\theta\in I_{\beta'}\subset I_\beta$. Suppose too that $\frac{4}{\epsilon^2} < \log |\beta|$. Then for all $$t\in  [\log |\beta|,\log|\beta'|],$$ except for a subset of size at most $\log \frac{4M}{\epsilon^2}$, $\beta$ has flat length at most $\epsilon$ on $g_tr_\theta(X,\omega)$. 
\end{lemma}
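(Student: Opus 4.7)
The plan is a direct calculation using the fact that $\theta$ being close to $\theta_\beta$ forces the component of $\beta$ perpendicular to the contracting direction to be tiny, which gives a long window in $t$ during which $\beta$ is short.

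Let $\varphi := \theta - \theta_\beta$ denote the angular deviation of $\theta$ from the direction of $\beta$. Since $\theta \in I_{\beta'} \subseteq I_\beta$, the definition of $I_\beta$ gives
\[ |\varphi| \;\leq\; \frac{1}{|\beta|^2 \log|\beta|}, \qquad\text{so}\qquad |\sin\varphi|\;\leq\;\frac{1}{|\beta|^2\log|\beta|}, \]
while $\cos\varphi \leq 1$ trivially. Decomposing the holonomy vector of $\beta$ on $(X,\omega)$ into components along and perpendicular to the direction that $r_\theta$ takes to vertical, these components have magnitudes $|\beta|\cos\varphi$ and $|\beta||\sin\varphi|$ respectively. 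After applying $g_t$, they become $e^{-t}|\beta|\cos\varphi$ (vertical) and $e^{t}|\beta||\sin\varphi|$ (horizontal). Requiring each to be at most $\epsilon/\sqrt{2}$ is sufficient for the total flat length to be at most $\epsilon$, and translates into
\[ \log\!\left(\tfrac{\sqrt{2}\,|\beta|\cos\varphi}{\epsilon}\right) \;\leq\; t \;\leq\; \log\!\left(\tfrac{\epsilon}{\sqrt{2}\,|\beta||\sin\varphi|}\right). \]

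The next step is to bound the portion of $[\log|\beta|,\log|\beta'|]$ lying outside this ``good'' window. The piece to the left of the window has length at most $\log(\sqrt{2}\cos\varphi/\epsilon) \leq \log(\sqrt{2}/\epsilon)$. The piece to the right has length at most
\[ \log|\beta'| - \log\!\left(\tfrac{\epsilon}{\sqrt{2}\,|\beta||\sin\varphi|}\right) \;=\; \log\!\left(\tfrac{\sqrt{2}\,|\beta'||\beta||\sin\varphi|}{\epsilon}\right); \]
inserting $|\sin\varphi| \leq 1/(|\beta|^2\log|\beta|)$ and the upper bound $|\beta'|\leq M|\beta|\log|\beta|$ from Proposition \ref{L1}(1) (equivalently Lemma \ref{L:length-facts}(\ref{F:L2})), this is at most $\log(\sqrt{2}M/\epsilon)$. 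Summing, the bad set has total length at most $\log(2M/\epsilon^2) < \log(4M/\epsilon^2)$.

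The only thing to verify carefully is that the good window is nonempty and contained in $[\log|\beta|,\log|\beta'|]$ up to the two bad pieces described above. Nonemptiness requires $|\sin\varphi| < \epsilon^2\cos^2\varphi/(2|\beta|^2)$; since $|\sin\varphi|\leq 1/(|\beta|^2\log|\beta|)$, this follows from the hypothesis $\log|\beta|>4/\epsilon^2$. Containment is automatic from the computations of the left and right bad-piece lengths above. No further ingredients are needed — the lemma is purely an arithmetic consequence of the bound on $|\varphi|$ provided by $\theta\in I_\beta$ and the upper bound on $|\beta'|$ provided by the child-selection process.
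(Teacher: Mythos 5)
Your proof is correct and follows essentially the same approach as the paper's: rotate so $\theta$ is vertical, decompose the holonomy of $\beta$ into vertical (contracting) and horizontal (expanding) parts using the bound $|\sin\varphi|\leq 1/(|\beta|^2\log|\beta|)$ from $\theta\in I_\beta$, and bound the two bad subintervals at the ends of $[\log|\beta|,\log|\beta'|]$ via $|\beta'|\leq M|\beta|\log|\beta|$. The paper evaluates the components at two explicit test times ($\log|\beta|+\log(2/\epsilon)$ and $\log|\beta'|-\log(2M/\epsilon)$, using $L^1$ bounds $\epsilon/2$ on each component) and invokes monotonicity, whereas you identify the exact good window from $\epsilon/\sqrt{2}$ component bounds; the latter even yields the marginally sharper bound $\log(2M/\epsilon^2)$. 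One small slip: the nonemptiness condition should read $|\sin\varphi|<\epsilon^2/(2|\beta|^2\cos\varphi)$, not $\epsilon^2\cos^2\varphi/(2|\beta|^2)$, but since $\cos^2\varphi\leq 1/\cos\varphi$ you verified a stronger condition and the hypothesis $\log|\beta|>4/\epsilon^2$ still suffices, so no harm is done.
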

\begin{proof} 
Suppose without loss of generality that we have rotated $(X, \omega)$ so that $\theta$ is the vertical direction.  Let $h(t)$ and $v(t)$ be the horizontal (resp. vertical) component of the period of $\beta$ on $g_t r_\theta (X, \omega)$. When $t = \log |\beta| + \log\left(\frac{2}{\epsilon} \right)$ we see that 
\[ v(t) \leq \frac{\epsilon}{2} \quad \text{and} \quad h(t) \leq \left( \frac{2|\beta|}{\epsilon} \right) |\beta| \sin \left( \frac{1}{|\beta|^2 \log |\beta|} \right) \leq \frac{2\epsilon^{-1}}{\log |\beta|} \leq \frac{\epsilon}{2}. \]
Similarly  when $t = \log |\beta'| - \log\left(\frac{2M}{\epsilon} \right)$ we see that
\[ v(t) \leq \frac{\epsilon}{2} \quad \text{and} \quad h(t) \leq \left( M |\beta| \log |\beta| \right)\left( \frac{\epsilon}{2M} \right) |\beta| \sin \left( \frac{1}{|\beta|^2 \log |\beta|} \right) \leq \frac{\epsilon}{2}. \]
Therefore, for all times  
\[ t \in \left[ \log |\beta| + \log\left(\frac{2}{\epsilon} \right), \log |\beta'| - \log\left(\frac{2M}{\epsilon} \right)  \right] \]
the curve $\beta$ has length at most $\epsilon$ on $g_t r_\theta (X, \omega)$
\end{proof}


\begin{cor}
Any angle $\theta \in \cD$ is a divergent on average direction in the moduli space of Riemann surfaces $\M_{g,n}$ (not just in the stratum of quadratic differentials). 
\end{cor}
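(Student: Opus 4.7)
Fix an arbitrary compact subset $K \subset \M_{g,n}$; the goal is to show that the proportion of time that the projection of $\{g_t r_\theta(X,\omega)\}_{0 \le t \le T}$ to $\M_{g,n}$ spends in $K$ tends to zero as $T \to \infty$. Since the Teichm\"uller flow preserves area, a curve of flat length at most $\epsilon$ on $g_t r_\theta(X,\omega)$ has extremal length at most $\epsilon^2$ on the underlying Riemann surface; by Maskit's comparison of extremal and hyperbolic length, for $\epsilon > 0$ sufficiently small (depending on $K$) the existence of such a curve forces the underlying Riemann surface to lie outside $K$. Fix such an $\epsilon$.

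Let $(\beta_n)_{n \ge 0}$ be the nested sequence of cylinders produced by the child-selection process whose angular intervals contain $\theta$, and set $\tau_n := \log |\beta_n|$. Applying Lemma~\ref{L2} with $\beta = \beta_n$ and $\beta' = \beta_{n+1}$, for every $n$ large enough that $\tau_n > 4/\epsilon^2$, the flat length of $\beta_n$ on $g_t r_\theta(X,\omega)$ is at most $\epsilon$ for all $t \in [\tau_n, \tau_{n+1}]$ outside an exceptional subset of measure at most $B := \log(4M/\epsilon^2)$. By the first paragraph, at every such "good" time $t$, the orbit projects outside $K$.

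By Proposition~\ref{L1}(1), $|\beta_{n+1}| \ge |\beta_n| \log |\beta_n|$, so $\tau_{n+1} \ge \tau_n + \log \tau_n$; in particular, $\tau_n$ grows super-linearly in $n$, hence $n = o(\tau_n)$. Given large $T$, let $n(T)$ denote the largest index with $\tau_{n(T)} \le T$. The total "bad" time in $[0,T]$ (initial segment plus the exceptional sets from each interval $[\tau_n,\tau_{n+1}]$) is bounded above by $\tau_0 + (n(T)+1) B$, so the proportion of time the orbit projects into $K$ is at most $(\tau_0 + (n(T)+1) B)/T$, which tends to zero. Hence $r_\theta(X,\omega)$ diverges on average in $\M_{g,n}$, establishing the corollary. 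The main technical input is Lemma~\ref{L2}; the remaining step is bookkeeping showing that the super-linear growth of $\tau_n$ makes the exceptional intervals asymptotically negligible relative to $T$.
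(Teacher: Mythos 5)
There is a genuine gap in the first paragraph. The claim that ``a curve of flat length at most $\epsilon$ on $g_t r_\theta(X,\omega)$ has extremal length at most $\epsilon^2$'' is false, and in fact the stated inequality is exactly backwards. For a unit-area translation surface the flat metric is an admissible metric in the definition of extremal length, so the length--area inequality gives $\mathrm{Ext}(\gamma) \ge \ell_{\mathrm{flat}}(\gamma)^2$; a short flat curve need not be the core of a high-modulus annulus and can easily have extremal length far larger than $\epsilon^2$ (e.g.\ a curve of flat length $\epsilon$ encircling a small slit has extremal length on the order of $1/\log(1/\epsilon)$, not $\epsilon^2$). The invariance of area under Teichm\"uller flow does not rescue the claim --- if anything, it is what forces the lower bound $\mathrm{Ext}(\gamma) \ge \ell_{\mathrm{flat}}(\gamma)^2$.

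What actually makes the argument work --- and what the paper uses --- is that $\beta_n$ is not an arbitrary short curve but the core curve of a \emph{flat cylinder of area at least $c$}, and both this cylinder structure and the area bound are preserved by $g_t$ and $r_\theta$. A flat cylinder of circumference at most $\epsilon$ and area at least $c$ has modulus at least $c/\epsilon^2$, hence its core curve has extremal length at most $\epsilon^2/c$. Replacing your ``$\epsilon^2$'' with ``$\epsilon^2/c$'' and invoking Maskit's comparison with this corrected bound repairs the step; the rest of your argument (the role of Lemma~\ref{L2}, the superlinear growth $\tau_{n+1} - \tau_n \ge \log\tau_n \to \infty$, and the final bookkeeping) is sound and follows the same route as the paper's proof.
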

\begin{proof}
Let $\epsilon > 0$. By the Mumford compactness theorem, $\M_{g,n}$ has a compact exhaustion by sets $K_\epsilon$ of Riemann surfaces on which all simple closed curves have hyperbolic length at least $\epsilon$. By Maskit~\cite{Maskit}, for sufficiently small $\epsilon$, there is an $\epsilon' > 0$ so that $K_\epsilon$ is contained in the set of Riemann surfaces on which all simple closed curves have extremal length at least $\epsilon'$. 

Let $\epsilon'' := \sqrt{c \epsilon'}$ and let $t_n := \log |\beta_n|$. Since $(|\beta_n|)_n$ is an increasing sequence that tends to $\infty$ let $N$ be an integer such that $\frac{4}{(\epsilon'')^2} < \log |\beta_n|$ for $n > N$. By Lemma~\ref{L2}, for all $n > N$ and for all but at most $\log \frac{4M}{(\epsilon'')^2}$ times in $[t_n, t_{n+1}]$ the translation surfaces $\{g_t r_\theta (X, \omega) \}_{t=t_n}^{t_{n+1}}$ contain a cylinder with core curve $\beta_n$ of length less than $\epsilon''$ and of area at least $c$. For these times $\beta_n$ has extremal length at most $\frac{(\epsilon'')^2}{c} = \epsilon'$ and hence the underlying Riemann surface lies outside of $K_\epsilon$.  Since $t_{n+1} - t_n$ tends to $\infty$ as $n \ra \infty$ whereas the amount of time spent in $K_\epsilon$ for times in $[t_n, t_{n+1}]$ is at most $\log \frac{4M}{(\epsilon'')^2}$, we see that $\{g_t r_\theta (X, \omega) \}$ spends asymptotically zero percent of its time in $K_\epsilon$ as desired.
\end{proof}

\begin{proof}[Proof of Theorem~\ref{T1}:]

\noindent Setting $s = \frac{1}{2}$ we see that

\[ \sum_{\beta' \in D_\beta} \frac{\rho_{\beta'}^s |I_{\beta'}|^s}{\rho_\beta^s |I_\beta|^s} = \sum_{\beta' \in D_\beta} \left( \frac{C N_\beta}{C N_{\beta'}} \right)^{1/2} \left( \frac{N_\beta|\beta|^2}{N_{\beta'} |\beta'|^2 } \right)^{1/2} =  \sum_{\beta' \in D_\beta} \frac{N_\beta |\beta|}{N_{\beta'} |\beta'|}. \]

\noindent By (4) of Proposition~\ref{L1}  the sum on the right is greater than
\[ \frac{1}{6L} \sum_{k=0}^{m-1} \sum_{t' \in J_k}  \frac{1}{t'} \]
For each $k$  the smallest value of the inner sum occurs when the $(1-2\nu)2^k\frac{N_\beta}{\delta}$ values of $t'$ in $[2^k\frac{N_\beta}{\delta}, 2^{k+1}\frac{N_\beta}{\delta}]$ are  all exactly distance $1$ apart and  lie in the interval $[\left(1 + 2\nu \right) 2^{k}\frac{N_\beta}{\delta}, 2^{k+1}\frac{N_\beta}{\delta}]$. But then 
\[ \sum_{t \in J_k}  \frac{1}{t}\geq \log (2^{k+1}\frac{N_\beta}{\delta})-\log \left( \left(1 + 2\nu \right) 2^{k}\frac{N_\beta}{\delta} \right) = \log \left( \frac{2}{1+2\nu} \right) \]
Since there are $m$ such sums, we see that
\[ \sum_{\beta' \in D_\beta} \frac{\rho_{\beta'}^s |I_{\beta'}|^s}{\rho_\beta^s |I_\beta|^s} > \frac{m \log \left( \frac{2}{1+2\nu} \right)}{6L} > 1 \]
By Cheung \cite[Theorem 3.3]{Cheung2}, the Hausdorff dimension of $\cD$ is at least $\frac{1}{2}$. Therefore, the Hausdorff dimension of the set of directions that diverge on average is exactly equal to $\frac{1}{2}$ by \cite{AAEKMU}.
\end{proof}

\bibliography{mybib}{}

\newcommand{\etalchar}[1]{$^{#1}$}
\providecommand{\bysame}{\leavevmode\hbox to3em{\hrulefill}\thinspace}
\providecommand{\MR}{\relax\ifhmode\unskip\space\fi MR }
\providecommand{\MRhref}[2]{%
  \href{http://www.ams.org/mathscinet-getitem?mr=#1}{#2}
}
\providecommand{\href}[2]{#2}
\begin{thebibliography}{ASAE{\etalchar{+}}17}

\bibitem[ASAE{\etalchar{+}}17]{AAEKMU}
Hamid Al-Saqban, Paul Apisa, Alena Erchenko, Osama Khalil, Shahriar Mirzadeh,
  and Caglar Uyanik, \emph{Exceptional directions for the {T}eichm\"{u}ller
  geodesic flow and {H}ausdorff dimension}, 2017, arXiv:1711.10542.

\bibitem[CE15]{EC}
Jon Chaika and Alex Eskin, \emph{Every flat surface is {B}irkhoff and
  {O}seledets generic in almost every direction}, J. Mod. Dyn. \textbf{9}
  (2015), 1--23. \MR{3395258}

\bibitem[Cha11]{Chaika11}
Jon Chaika, \emph{Homogeneous approximation for flows on translation surfaces},
  2011, arXiv:1110.6167.

\bibitem[Che11]{Cheung2}
Yitwah Cheung, \emph{Hausdorff dimension of the set of singular pairs}, Ann. of
  Math. (2) \textbf{173} (2011), no.~1, 127--167. \MR{2753601}

\bibitem[Cho17]{Choudhuri17}
Manoj Choudhuri, \emph{On cusp excursions of geodesics and {D}iophantine
  approximation}, 2017, arXiv:1703.09002.

\bibitem[CM18]{CM18}
Jon Chaika and Howard Masur, \emph{The set of non-uniquely ergodic d-{IET}s has
  {H}ausdorff codimension 1/2}, 2018, arXiv:1801.00770.

\bibitem[CRS08]{CRS08}
Young-Eun Choi, Kasra Rafi, and Caroline Series, \emph{Lines of minima and
  {T}eichm\"uller geodesics}, Geom. Funct. Anal. \textbf{18} (2008), no.~3,
  698--754. \MR{2438996}

\bibitem[Dan85]{Dani85}
S.~G. Dani, \emph{Divergent trajectories of flows on homogeneous spaces and
  {D}iophantine approximation}, J. Reine Angew. Math. \textbf{359} (1985),
  55--89. \MR{794799}

\bibitem[EKZ]{EKZ}
Alex Eskin, Maxim Kontsevich, and Anton Zorich, \emph{Sum of {L}yapunov
  exponents of the {H}odge bundle with respect to the {T}eichm\"{u}ller
  geodesic flow}, preprint, arXiv 1112.5872 (2012).

\bibitem[FLWW09]{ALWW}
Ai-Hua Fan, Ling-Min Liao, Bao-Wei Wang, and Jun Wu, \emph{On {K}hintchine
  exponents and {L}yapunov exponents of continued fractions}, Ergodic Theory
  Dynam. Systems \textbf{29} (2009), no.~1, 73--109. \MR{2470627}

\bibitem[KKLM14]{KKLM}
S.~Kadyrov, D.~Y. Kleinbock, E.~Lindenstrauss, and G.~A. Margulis,
  \emph{Singular systems of linear forms and non-escape of mass in the space of
  lattices}, arXiv:1407.5310.

\bibitem[KMS86]{KMS}
Steven Kerckhoff, Howard Masur, and John Smillie, \emph{Ergodicity of billiard
  flows and quadratic differentials}, Ann. of Math. (2) \textbf{124} (1986),
  no.~2, 293--311. \MR{855297 (88f:58122)}

\bibitem[Mas85]{Maskit}
Bernard Maskit, \emph{Comparison of hyperbolic and extremal lengths}, Ann.
  Acad. Sci. Fenn. Ser. A I Math. \textbf{10} (1985), 381--386. \MR{802500
  (87c:30062)}

\bibitem[Mas90]{Ma-complex}
Howard Masur, \emph{The growth rate of trajectories of a quadratic
  differential}, Ergodic Theory Dynam. Systems \textbf{10} (1990), no.~1,
  151--176. \MR{1053805}

\bibitem[Mas91]{Ma91}
\bysame, \emph{Hausdorff dimension of divergent {T}eichm\"uller geodesics},
  Trans. Amer. Math. Soc. \textbf{324} (1991), no.~1, 235--254. \MR{984857}

\bibitem[Mas92]{Ma}
\bysame, \emph{Hausdorff dimension of the set of nonergodic foliations of a
  quadratic differential}, Duke Math. J. \textbf{66} (1992), no.~3, 387--442.

\bibitem[Min92]{Min92}
Yair~N. Minsky, \emph{Harmonic maps, length, and energy in {T}eichm\"uller
  space}, J. Differential Geom. \textbf{35} (1992), no.~1, 151--217.
  \MR{1152229}

\bibitem[MS91]{MaS}
Howard Masur and John Smillie, \emph{Hausdorff dimension of sets of nonergodic
  measured foliations}, Ann. of Math. (2) \textbf{134} (1991), no.~3, 455--543.
  \MR{1135877}

\bibitem[MTnW18]{MTW18}
Luca Marchese, Rodrigo Trevi\~{n}o, and Steffen Weil, \emph{Diophantine
  approximations for translation surfaces and planar resonant sets}, Comment.
  Math. Helv. \textbf{93} (2018), no.~2, 225--289. \MR{3811752}

\bibitem[Raf07]{Rafi07}
Kasra Rafi, \emph{Thick-thin decomposition for quadratic differentials}, Math.
  Res. Lett. \textbf{14} (2007), no.~2, 333--341. \MR{2318629}

\bibitem[Str84]{StrebelQD}
Kurt Strebel, \emph{Quadratic differentials}, Ergebnisse der Mathematik und
  ihrer Grenzgebiete (3) [Results in Mathematics and Related Areas (3)],
  vol.~5, Springer-Verlag, Berlin, 1984. \MR{743423}

\bibitem[Vor03]{Vorobets03}
Yaroslav Vorobets, \emph{Periodic geodesics on translation surfaces}, 2003,
  arXiv:math/0307249.

\bibitem[Vor05]{Vo05}
Yaroslav Vorobets, \emph{Periodic geodesics on generic translation surfaces},
  Algebraic and topological dynamics, Contemp. Math., vol. 385, Amer. Math.
  Soc., Providence, RI, 2005, pp.~205--258. \MR{2180238}

\bibitem[ZK75]{KatokZ}
A.~N. Zemljakov and A.~B. Katok, \emph{Topological transitivity of billiards in
  polygons}, Mat. Zametki \textbf{18} (1975), no.~2, 291--300. \MR{0399423}

\bibitem[Zor06]{Z}
Anton Zorich, \emph{Flat surfaces}, Frontiers in number theory, physics, and
  geometry. {I}, Springer, Berlin, 2006, pp.~437--583.

\end{thebibliography}
\bibliographystyle{amsalpha}
\end{document}